\documentclass[a4paper,11pt,oneside]{amsart}
\usepackage{graphicx}
\usepackage{geometry}

\geometry{a4paper,scale=0.8}
\usepackage{amsmath}
\usepackage{amssymb}
\usepackage{amsthm}
\usepackage{hyperref}
\numberwithin{equation}{section}
\usepackage{graphicx}
\usepackage{float}
\usepackage[section]{placeins}
\newtheorem{theorem}{Theorem}[section]
\newtheorem{lemma}{Lemma}[section]

\newtheorem{definition}{Definition}[section]
\newtheorem{remark}{Remark}[section]
\newtheorem{assume}{Assumption}[section]

\hypersetup{hidelinks} 
\usepackage{dsfont}
\usepackage{enumerate}
\usepackage{color}
\usepackage{booktabs}

\pagestyle{plain}

\title{ Optimal Dividends under Markov-Modulated \\Bankruptcy Level}
\date{\today}
\author{Giorgio Ferrari}
\address{G. Ferrari: Center for Mathematical Economics (IMW), Bielefeld University, Universit{\"a}tsstrasse 25, 33615, Bielefeld, Germany}
\email{giorgio.ferrari@uni-bielefeld.de}

\author{Patrick Schuhmann}
\address{P. Schuhmann: Center for Mathematical Economics (IMW), Bielefeld University, Universit{\"a}tsstrasse 25, 33615, Bielefeld, Germany}
\email{patrick.schuhmann@uni-bielefeld.de}

\author{Shihao Zhu}
\address{S. Zhu: Center for Mathematical Economics (IMW), Bielefeld University, Universit{\"a}tsstrasse 25, 33615, Bielefeld, Germany}
\email{shihao.zhu@uni-bielefeld.de}
\begin{document}
\newpage
\maketitle

\begin{abstract}
This paper proposes and studies an optimal dividend problem in which a two-state regime-switching environment affects the dynamics of the company's cash surplus and, as a novel feature, also the bankruptcy level. The aim is to maximize the total expected profits from dividends until bankruptcy. The company's optimal dividend payout is therefore influenced by four factors simultaneously: Brownian fluctuations in the cash surplus, as well as regime changes in drift, volatility and bankruptcy levels. In particular, the average profitability can assume different signs in the two regimes. We find a rich structure of the optimal strategy, which, depending on the interaction of the model's parameters, can be either of \emph{barrier-type} or of \emph{liquidation-barrier type}. Furthermore, we  provide explicit expressions of the optimal policies and value functions. Finally, we complement our theoretical results by a detailed numerical study, where also a thorough analysis of the sensitivities of the optimal dividend policy with respect to the problem's parameters is performed.

\end{abstract}

\vspace{2mm}

\noindent{\bf Keywords:}\ Optimal dividend policy; Regime-switching; Regime-dependent bankruptcy levels; HJB equation;  Singular stochastic control.

\vspace{2mm}

\noindent{\bf MSC Classification:}\ 91B70, 93E20, 60H30.
\vspace{2mm}

\noindent{\bf JEL Classification:}\ G35, E32, C61.
\vspace{2mm}

\section{Introduction}
The classical definition of \textit{ruin time} assumes that the bankruptcy level of a company is constant (usually set to $0$) and a single downcrossing of this level by the surplus process leads to bankruptcy. Since such a modeling is far too unrealistic in many practical situations, several modifications of ruin time have been explored in the literature, being the main idea to distinguish between ruin (negative surplus) and bankruptcy (going out of business). For example, absolute ruin arises when the premia received do not suffice to make the interest payments on debt (see, e.g., \cite{gerber2007absolute} and \cite{luo2011absolute}); Parisian ruin has been considered by introducing a grace period before liquidation (see \cite{dassios2008parisian} and \cite{loeffen2013parisian}, among many others); a so-called Omega model has been proposed by assuming that there is a bankruptcy rate function $\omega(x)$ describing the probability of bankruptcy when the company's  surplus $x$ is negative (see, e.g., \cite{albrecher2011optimal} and \cite{gerber2012omega}); \cite{vierkotter2017optimal} introduces penalty payments when the surplus becomes negative. Very recently, \cite{li2020liquidation} and \cite{wang2021optimality}  proposed a model based on the U.S. bankruptcy code (liquidation process in Chapter 7 and reorganization process in Chapter 11) and set liquidation, rehabilitation, and solvency barriers.  Furthermore, there are research works considering stochastic barriers in risk theory, although the focus lies mainly on analyzing the first passage time density of a Brownian motion through a stochastic boundary (see, e.g., \cite{che2013stochastic}, \cite{guillaume2021closed} and \cite{park1983stochastic}).

The empirical study by \cite{bernstein2019asset} shows that while there are uniform criteria by which a reorganization process may be converted into a liquidation procedure, there is significant variation in the interpretation of these criteria. Furthermore, \cite{antill2019optimal} also points out that while debtors may in principle choose to convert to liquidation process, and creditors may petition for such a conversion, the ultimate decision lies with the judge. These findings suggest that in reality several external factors can actually interfere with the liquidation procedure of a company.

In this paper we propose and study an optimal dividend problem in which the bankruptcy level as well the surplus process of a company is modulated by a two-state continuous-time Markov chain. In the spirit of the seminal \cite{hamilton1989new}, the Markov chain models macroeconomic conditions that inevitably affect a company's solvency status and therefore its bankruptcy level. For example, in a crisis (like the recent Covid-epidemics) companies can be supported by governments through the postponement of taxes' payments, which then effectively delay the arise of bankruptcy.

 Because the ruin probability approach neglects the time value of money and supposes that the surplus tends to infinity, maximizing expected discounted dividends until bankruptcy has been a popular and well-investigated topic in actuarial science, whose roots date back to the work of \cite{de1957impostazione}.  We refer to \cite{albrecher2009optimality}, \cite{avanzi2009strategies}, \cite{schmidli2007stochastic} and references therein for comprehensive surveys on developments in optimal dividends and related methodology. Optimal dividend problems with regime-switching have attracted substantial interest in the literature. We refer the reader to, e.g., \cite{zhu2013dividend} for a model where the cash surplus evolves as a general diffusion;  \cite{jiang2015optimal}, \cite{jiang2019optimal} and  \cite{zhu2014singular} for a  jump-diffusive setting; \cite{wei2010classical} and \cite{wei2010optimal} for models involving reinsurance contacts;  \cite{zhu2016optimal} for the allowance of capital injection; \cite{akyildirim2014optimal} and \cite{brinker2021dividend} for regime-dependent preference rate; to \cite{bandini2022optimal} for a model with a stochastic diffusive discount rate. Particularly relevant for our study are the works \cite{sotomayor2011classical} and \cite{jiang2012optimal}, where the optimal dividend strategy was studied within a Brownian model.
 The authors of \cite{sotomayor2011classical} addressed two different cases: bounded dividend rates and unbounded dividend rates. Meanwhile, via a ``guess and verify" approach they obtained the analytical expressions of the optimal dividend policy, which depends on the regime of the economy. On the other hand, \cite{jiang2012optimal} tackled a similar model using a different approach. They constructed the candidate value function by employing the dynamic programming equation and presented a contraction algorithm to compute the optimal threshold levels. It is worth pointing out that besides studying the classical case where the drift is positive in both regimes, in \cite{jiang2012optimal} the surplus' drift is also allowed to be negative in one regime. The latter modeling assumption significantly impacts the company's optimal dividend strategy, which is in fact shown in  \cite{jiang2012optimal} to exhibit a band structure, that otherwise frequently appears in jump models and was first introduced by \cite{gerber1969entscheidungskriterien}.

As in \cite{jiang2012optimal}, also in this paper, we allow the drift to be negative in one regime. Furthermore, as a novel feature, the bankruptcy level depends on the underlying Markov chain. Therefore, the company's optimal dividend strategy will be influenced by several factors together: Brownian fluctuations of the cash surplus, and regime changes yielding different signs of drifts in the two regimes, different volatilities and, moreover, different bankruptcy levels. To the best of our knowledge, this is the first paper that catches these features simultaneously. 

 We solve the problem by following a classical ``guess and verify" approach and provide expressions for both the value function and the optimal control in four different cases. The Hamilton-Jacobi-Bellman (HJB) equation associated with the optimal dividend problem takes the form of a system of two coupled variational inequalities. The coupling is through the transition rates of the Markov chain $\epsilon$, and there are regime-dependent boundary conditions due to regime-dependent bankruptcy levels. These aspects make the problem of finding an explicit solution quite involved, and the structure of the optimal solution quite rich. We guess that, when we fix the negative drift in regime one and vary $\mu_2 \in \mathbb{R}$ in regime two, the structure of the optimal dividend strategies strongly depends on the size of $\mu_{2}$. Specifically, when $\mu_2$ is sufficiently low, the optimal dividend strategy should be of \emph{barrier type}, while it changes to a \emph{liquidation-barrier type} (band structure) when $\mu_2$ is higher. A similar behavior was also observed in \cite{reppen2020optimal}, where an optimal dividend model with random diffusive profitability has been studied. 
 
Such an ``Ansatz'' is confirmed by our numerical analysis where, by varying $\mu_2$, we identify four different cases. The numerical exercise developed in Section 5 below hinges on the detailed analytical study of Section 4.  Therein we provide a set of sufficient conditions guaranteeing that appropriate solution to the HJB equation identifies with the value function. It is worth noticing that those sufficient conditions can be thought of as ``minimal'' ones, in the sense that they involve only the solution to a system of highly nonlinear equations, the model's parameters, and the explicitly constructed candidate value function. Furthermore, it is an easy numerical task to check their validity, as we in fact do in Section 5. The main result of Section 4 consists in the nontrivial verification of the optimality of the constructed candidate value functions, which is accomplished by combining probabilistic and analytic techniques (see, e.g., proofs in Sections A.3 and A.4). For example, a suitable application of the weak maximum principle yields the concavity and convexity of the (candidate) value function, which is a key step in subsequent verification (see again the proofs in Sections A.3 and A.4).

The rich structure of the optimal policy seem to be numerically robust with respect to changes of the model's parameters and leads to interesting economic conclusions. As an example, we find that although the surplus process has negative drift in both regimes, the different bankruptcy levels induce the company to continue the business in regime two in order to strategically exploit a change to regime one, which is associated to a lower bankruptcy level (cf. Case (B) in Section 4 and Section 5.2).

In conclusion, we believe that our main contributions are the following. Firstly, we introduce and study a new model, in which the bankruptcy level is random and depends on the underlying business conditions, thus generalizing \cite{jiang2012optimal} and \cite{sotomayor2011classical}. Moreover, through a detailed numerical analysis in a case study we provide some interesting novel economic implications of the optimal policy. Finally, from a mathematical point of view, we believe that our study nicely complements the existing literature on singular control with regime-switching (see, e.g., \cite{ferrari2018optimal} and references therein). Indeed, we identify minimal sufficient conditions for the optimality of a qualitatively rich candidate optimal dividend policy and we confirm the veridicity of our educated guess through a nontrivial and technical verification step.

The rest of the paper is organized as follows. In Section 2, we set up the model. In Section 3, we introduce the associated HJB equation, and we prove a verification theorem. In Section 4, we obtain the optimal dividend policy in four cases. Section 5 presents a detailed numerical study, where we examine the impact of different bankruptcy levels and provide the sensitivity analysis of the optimal solution with respect to the model's parameters. Section 6 concludes. Appendix A collects the proofs of some results of Section 3 and Section 4, whereas Appendix B contains the auxiliary results needed in the paper.

\section{Model formulation}
Consider a complete probability space $(\Omega,\mathcal{F},\mathbb{P})$ on which it is defined a standard Brownian motion $W:=\{W_t,t\geq0\}$ and an observable continuous-time Markov chain $\epsilon:=\{\epsilon_t, t\geq0\}$ with finite state $\mathcal{S}=\{1,2\}$. Here,  $\epsilon_t$  represents the regime of the economy at time $t$. We assume that $W$ and $\epsilon$ are independent, and that the Markov chain $\epsilon$ has a transition intensity matrix $\mathbb{Q} := (q_{ij})_{i,j \in \mathcal{S}}$, with transition rates such that $\lambda_{i}:=-q_{ii} >0$ and $\sum_{j \in \mathcal{S}}q_{ij}=0$ for every $i \in \mathcal{S}$. We denote by $\mathbb{F}:= \{\mathcal{F}_t, t\geq 0\}$ the filtration jointly generated by $W$ and $\epsilon$, as usual augmented by  $\mathbb{P}$-null sets of $\mathcal{F}$.

 Let the adapted process $X:=\{X_t^D,t\geq0\}$ represent the cash surplus of the company. We assume that $X$ satisfies
\begin{equation}\label{stateprocess}
\textrm{d}X_t^D = \mu_{\epsilon_{t}}\textrm{d}t + \sigma_{\epsilon_{t}}\textrm{d}W_t -\textrm{d}D_t,
\end{equation}
with initial level of the cash surplus $X_0^D = x \in \mathbb{R}$ and initial regime $\epsilon_0 = i \in \mathcal{S}.$ For every state $i \in \mathcal{S},$ both drift parameter $\mu_i \in \mathbb{R}$ and volatility parameter $\sigma_i>0$ are assumed to be known constants. The adapted process $D:=\{D_t, t\geq 0\}$ represents the cumulative amount of dividends paid from time zero up to $t$. For future frequent use, we define $\mathbb{P}_{x,i}[\cdot]:=\mathbb{P}[\cdot| X^D_0=x,\epsilon_0=i]$ and denote by $\mathbb{E}_{x,i}[\cdot]$ the corresponding expectation operator.

If the regime of the economy at time $t$ is $\epsilon_t=i\in\mathcal{S}$, we assume that the company is considered bankrupt as soon as the cash surplus hits the critical constant level $\theta_i \in \mathbb{R}$, which is in the following referred to as the bankruptcy level. We therefore define the bankruptcy time $\tau$ as
\begin{equation}\label{ruintime}
\tau := \inf\{t \geq0 : X^D_t \leq \theta_{\epsilon_t} \}, \quad  \mathbb{P}_{x,i}\text{-}a.s.,
\end{equation}
(with the usual convention $\inf \emptyset = +\infty$) and we introduce the set of admissible strategies $\mathcal{A}(x,i), (x,i) \in \mathbb{R}\times \mathcal{S}$, as it follows.

\begin{definition}\label{admissiblecontrol}
Let $(x,i) \in \mathbb{R} \times \mathcal{S}$ be given and fixed. An $\mathbb{F}$-adapted, non-decreasing  process $D$ is called an \textbf{admissible strategy} for $(x,i)$, and we write $D \in \mathcal{A}(x,i)$, if it satisfies the following conditions: 
\begin{enumerate}[(i)]
	\item $D: \Omega \times \mathbb{R_+} \to \mathbb{R_+}$, $D_0=0$, with sample paths that are left-continuous with right limits;
	\item $D_{t+}-D_t \leq X^D_t {-\theta_{\epsilon_t}} $  for all  $ t \geq 0, \mathbb{P}_{x,i}$-a.s.;
	\item $\textrm{d} D_t=0$ for $t >\tau, \mathbb{P}_{x,i}$-a.s., where $\tau$ is the bankruptcy time defined in (\ref{ruintime}). 
\end{enumerate}

\end{definition}

{In Definition 2.1, and in the rest of this paper, we shall use that the random measure $dD$ induced by any $D \in \mathcal{A}(x,i)$ admits the decomposition $dD_t =dD^c_t +D_{t+}-D_t, t\geq 0$, where $D^c$ denotes the continuous part of $D$}.

 Given the initial cash surplus $x$ and economic regime $i$, the expected discounted value of all the dividends accrued until ruin following the strategy $D \in \mathcal{A}(x,i)$ is
\begin{equation*}
J(x,i;D) := \mathbb{E}_{x,i} \bigg[ \int_{0}^{\tau}\textrm{ e}^{-\rho t}\textrm{d}D_t       \bigg],
\end{equation*}
where the discount factor $\rho>0$ is constant.

The aim of this paper is to find an admissible strategy $D^* \in \mathcal{A}(x,i)$ such that 
\begin{equation}\label{valuefun}
V(x,i):=	J(x,i;D^*) = \sup_{D \in \mathcal{A}(x,i)}J(x,i;D), \quad (x,i) \in \mathbb{R}\times \mathcal{S},
\end{equation}
where $V(x,i)$ is called the value function and $D^*$ is called an optimal strategy. 

Problem (\ref{valuefun}) falls into the class of singular stochastic control problems with regime switching.  Since, for any admissible $D$, the process ($X^D, \epsilon, D$) in (\ref{stateprocess}) is a strong Markov process,  then we will use the Dynamic Programming Principle and the associated Hamilton-Jacobi-Bellman (HJB for short) equation (see for instance Chapter VIII in \cite{fleming2006controlled}) to solve Problem (\ref{valuefun}).

\section{The verification theorem}

Throughout this paper, without loss of generality, we make the following assumption.

\begin{assume}
One has $\theta_1<\theta_2$. 
\end{assume}

We will see that the parameters $\theta_1$ and $ \theta_2$ play an important role in our subsequent analysis. It is worth noticing that the case $\theta_{1}>\theta_{2}$ is completely symmetric to case $\theta_{1}<\theta_{2}$ and it can be in fact treated with similar mathematical arguments. Also, the case $\theta_1=\theta_2$ has been already solved in \cite{jiang2012optimal} and can be regarded as a special case of our model. For the sake of brevity we therefore omit the discussion of the problem with $\theta_{1} \geq \theta_{2}$. 

Throughout the paper, for any function $g:\mathbb{R} \times \mathcal{S} \to \mathbb{R},$ we use $g', g''$ and $g'''$ to denote, respectively, the first-order, second-order, and third-order derivatives of $g$ with respect to its first argument, unless otherwise stated.

Let $f:\mathbb{R} \times \mathcal{S} \to \mathbb{R}$ be a function such that $f(\cdot,i) \in C^2(\mathbb{R}), i\in\mathcal{S}$, and define the second-order differential operator $\mathcal{L}$ such that
\begin{equation*}
\mathcal{L} f(x,i) := \frac{1}{2}\sigma_i^2f''(x,i) + \mu_i f'(x,i) - \lambda_i f(x,i)+ \sum_{j \neq i}q_{ij}f(x,i), \ (x,i) \in \mathbb{R} \times \mathcal{S}.
\end{equation*}

By the Dynamic Programming Principle, we expect that $V(x,i)$ identifies with a suitable solution $w(x,i)$ to the HJB equation
\begin{equation}\label{HJB}
\max \big\{ (\mathcal{L} -\rho)w(x,i), 1-w'(x,i) \big\}=0, \ x > \theta_i, \ i=1,2,
\end{equation}
with the boundary conditions {$w(x,i) =0$ for $x \leq \theta_i, i=1,2$}. It is worth noting that equation (\ref{HJB}) is actually a system of two variational inequalities with gradient constraints, coupled through the transition rates $\lambda_i, i=1,2.$ In particular, dealing with regime-dependent boundary conditions, we will face a structure of the value function $V$ and of the optimal dividend strategy which is novel with respect to the existing literature. 

From (\ref{HJB}), for any suitable solution $w$ to the HJB equation and regime $i \in \mathcal{S}$, we can define the continuation region
\begin{align*}
 \mathcal{C}(i):=\big \{x \in (\theta_i, \infty): (\mathcal{L} -\rho)w(x,i)=0, \ 1-w'(x,i)< 0 \big \}
\end{align*}
and the intervention region
\begin{align*}
 \mathcal{O}(i):= \big \{x \in (\theta_i, \infty): (\mathcal{L} -\rho)w(x,i) \leq 0, \ 1-w'(x,i)= 0 \big  \}.
\end{align*}
Then, for a given $d_1>\theta_1$, we introduce two dividend strategies $D^{d_1,b,w}$ and $D^{b,w}$ associated with $w$, where, to simplify notation, we write $w:=(w(x,1), w(x,2))$ and $b:=(b_1,b_2)$ for some $b_i \geq \theta_i, i=1,2.$ 
\begin{definition}\label{dividend1}
	For $i=1,2$, let  $N_i$ be a finite subset of $(\theta_i,\infty)$ and $w(\cdot,i) \in C^2((\theta_i,\infty) \backslash N_i) \cap C^1(\theta_{i},\infty) $ be a nondecreasing solution (in the almost-everywhere sense) to the HJB equation (\ref{HJB}). Suppose there exist $b_i>\theta_i,i=1,2,$ and $\theta_1<d_1<b_1$ such that 
	\begin{align*}
		\left\{
		\begin{aligned}
		&	\mathcal{C}(1)=(d_1,b_1),\quad &\mathcal{C}(2)=(\theta_2,b_2),     \\
&\mathcal{O}(1)=(\theta_1,d_1] \cup  [b_1,\infty)   ,\quad &\mathcal{O}(2)=[b_2,\infty).  
		\end{aligned}
		\right.
	\end{align*}
In particular, the following holds
	\begin{align}\label{dividendequation1}
		\left\{
		\begin{aligned}
		&	w'(x,i)=1 , &\forall x\in [b_i,\infty),\\
		& (\mathcal{L}-\rho)w(x,1)=0, &\forall x\in (d_1,b_1), \\
		& w'(x,1)=1 , &\forall x\in (\theta_{1},d_1],\\
		& (\mathcal{L}-\rho)w(x,2)=0, &\forall x\in (\theta_{2},b_2).
		\end{aligned}
		\right.
	\end{align}
	Then,  the admissible \textbf{liquidation-barrier-type} dividend strategy $\{ D^{d_1,b,w}_t, t \geq 0 \}$ is such that  $\mathbb{P}_{x,i}$-a.s.
\begin{enumerate}[(i)]
	\item $X_t^{D^{d_1,b,w}}:=\left\{
\begin{aligned}
& x+ \int_0^t \mu_{\epsilon_s}\textrm{d}s+  \int_0^t \sigma_{\epsilon_s}\textrm{d}W_s-D^{d_1,b,w}_t, \ &\forall   t \in [0, \tau),\\
& \theta_{\epsilon_t}, \ &\forall t \in [\tau, \infty);
\end{aligned}
\right.$
	\item $ X_t^{D^{d_1,b,w}} \in \overline{ \mathcal{C}(\epsilon_t)}, \ \forall   t \in [0, \tau);$
	\item $\int_0^{\infty}\mathbb{I}_{\{    X_t^{D^{d_1,b,w}} \in  \mathcal{C}(\epsilon_t)              \}}\textrm{d}D^{d_1,b,w}_t=\int_0^{\tau}  \mathbb{I}_{\{    X_t^{D^{d_1,b,w}} \in  \mathcal{C}(\epsilon_t)              \}}\textrm{d}D^{d_1,b,w}_t        =      0;$
\item ${D^{d_1,b,w}_{t+}}-{D^{d_1,b,w}_{t}}=X_{t}^{D^{d_1,b,w}}-\theta_1,$ if $\theta_1<X_{t}^{D^{d_1,b,w}}\leq d_1$ on $\{ \epsilon_t =1\}.$
\end{enumerate}
\end{definition}

\begin{definition}\label{dividend2}
For $i=1,2$, let $N_i$ be a finite subset of $(\theta_i,\infty)$ and $w(\cdot,i) \in C^2((\theta_i,\infty) \backslash N_i) \cap C^1(\theta_{i},\infty) $ be a nondecreasing solution (in the almost-everywhere sense) to the HJB equation (\ref{HJB}). Suppose there exist $b_i\geq \theta_i,i=1,2,$ such that 
	\begin{align*}
			\mathcal{C}(i)=(\theta_i,b_i),\   
&\mathcal{O}(i)=[b_i,\infty) ,\ i=1,2,
	\end{align*}
with $\mathcal{C}(i)= \emptyset$ if $b_i=\theta_i$. In particular, the following holds
	\begin{align}\label{dividendequation2}
		\left\{
		\begin{aligned}
		&	w'(x,i)=1 , &\forall x\in [b_i,\infty),\\
		& (\mathcal{L}-\rho)w(x,i)=0, &\forall x\in (\theta_{i},b_i).
		\end{aligned}
		\right.
	\end{align}
	Then, the admissible \textbf{barrier-type} dividend strategy: $D^{b,w}_t:= \sup_{0\leq s\leq t}( x+\mu_{\epsilon_s}s +\sigma_{\epsilon_s}W_s-b_{\epsilon_{s}} )^+, t \in(0,\tau) $  is such that $\mathbb{P}_{x,i}$-a.s.
\begin{enumerate}[(i)]
\item  $X_t^{D^{b,w}}:=\left\{
\begin{aligned}
& x+ \int_0^t \mu_{\epsilon_s}\textrm{d}s+  \int_0^t \sigma_{\epsilon_s}\textrm{d}W_s-D^{b,w}_t, &\forall t \in [0, \tau),\\
& \theta_{\epsilon_t}, & \forall t \in [\tau, \infty);
\end{aligned}
\right.$
\item $ X_t^{D^{b,w}} \in \overline{ \mathcal{C}(\epsilon_t)}, \ \forall   t \in [0, \tau)$;

\item $\int_0^{\infty}\mathbb{I}_{\{    X_t^{D^{b,w}} \in  \mathcal{C}(\epsilon_t)              \}}\textrm{d}D^{b,w}_t=\int_0^{\tau}\mathbb{I}_{\{    X_t^{D^{b,w}} \in  \mathcal{C}(\epsilon_t)              \}}\textrm{d}D^{b,w}_t =  0$.
 \end{enumerate}

\end{definition}

Definition $\ref{dividend1}$ introduces a dividend strategy which is of barrier-type in regime two, and it consists of a combination of a reflecting and liquidation strategy in regime one. {In particular, $D^{d_1,b,w}$ is such that the cash surplus at time $t$ is kept below the reflection barrier $b_{\epsilon_t}$ with minimal effort (i.e., according to a Skorokhod reflection). Moreover, it leads to immediate liquidation whenever the underlying business conditions are in Regime 1 and the cash surplus is sufficiently low, i.e., smaller than $d_1$}. {We will provide in Sections 4.3 and 4.4 sufficient conditions under which the strategy $D^{d_1,b,w}$ is indeed optimal. The numerical analysis of Section 5 suggests that this is the case when $\mu_1 \leq 0$ and $\mu_2$ is sufficiently large} (cf. Sections 4.3, 4.4 and Section 5; compare also to Definition 5.2 in \cite{jiang2012optimal}). Definition \ref{dividend2} instead prescribes a reflecting policy in both regimes and we will see that this is optimal when $\mu_1 \leq 0$ and $|\mu_2|$ is relatively small (cf. Section 4.2).

The strategies $D^{d_1,b,w}$ and $D^{b,w}$ yield lump sum dividend payments at initial time, at the jump times of the Markov chain, and, for the policy $D^{d_1,b,w}$, at the liquidation time $\tau_{d_1}:=\inf\{0\leq t<\tau: X^{D^{d_1,b,w}}_t\leq d_1 \ \text{and} \ \epsilon_{t}=1\} $ (with $\inf \emptyset = +\infty$). In particular, under the above policies we may observe a payout of size $(X_t-b_{\epsilon_{t}})^+$ and, according to policy $D^{d_1,b,w}$, of size $(X_t-\theta_{1})$ if $\epsilon_{t}=1$ and the current surplus level is such that $X_t \leq d_1$. A simulation of the cash reserves process corresponding to a liquidation-barrier-type strategy with $\theta_2<d_1<b_2<b_1$ (compare also to the numerical example in Section 5.3) is displayed in Figure \ref{optimalbarrier}.

\begin{figure}[htbp]
	\centering
	
		\includegraphics[width=3in]{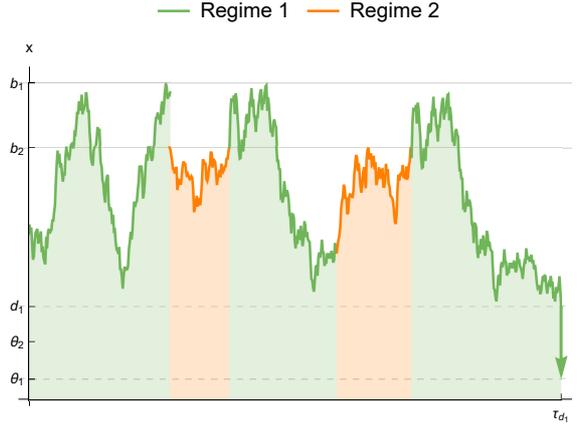}
		\caption{The cash reserves process corresponding to a liquidation-barrier strategy}
		\label{optimalbarrier}
	
\end{figure}

More generally, for a given admissible $D$, we let $\Lambda^D:= \{ t \geq 0: D_{t+} \neq D_{t} \},$ the set of times at which $D$ has a discontinuity. The set $\Lambda^D$ is countable because $D$ is nondecreasing. Recall also that $D^c$ is the continuous part of $D$; that is $D^c_t := D_t-\sum_{0 \leq s \leq t, s \in\Lambda^D }(D_{s+}-D_s). $ In the following we shall denote by $\mathcal{B}(\mathcal{C}(i))$ the boundary points of $\mathcal{C}(i)$ in Definition \ref{dividend1}. That is,   $\mathcal{B}(\mathcal{C}(1))=\{d_1,b_1\}$ and $\mathcal{B}(\mathcal{C}(2))=\{b_2\}.$

\begin{lemma}\label{lemma3.1}
	Let $D \in \{ D^{d_1,b,w}, D^{b,w}\}$, with $ D^{d_1,b,w}$ and $D^{b,w}$ as in Definition \ref{dividend1}, \ref{dividend2}, respectively.  Also, let $\eta \in [0,\tau] $ be any stopping time. Then
	\begin{align*}
		\mathbb{E}_{x,i}\bigg[\int_{0}^{\eta}e^{-\rho s}w'(X^D_s,\epsilon_{s})  dD^c_s\bigg]  - \mathbb{E}_{x,i}\bigg[ \sum_{0 \leq s \leq  \eta, s \in\Lambda^D } e^{-\rho s}(w(X^D_{s+},\epsilon_{s})-w(X^D_{s},\epsilon_{s}) ) \bigg]   = \mathbb{E}_{x,i}\bigg[ \int_{0}^{\eta} e^{-\rho s}dD_s\bigg]. 
	\end{align*}
\end{lemma}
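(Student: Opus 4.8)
The plan is to prove the identity \emph{pathwise}, $\mathbb{P}_{x,i}$-a.s., and then take expectations. First I would use the decomposition $dD_s=dD^c_s+(D_{s+}-D_s)$ recalled after Definition \ref{admissiblecontrol} to split the right-hand side as
\begin{equation*}
\int_0^\eta e^{-\rho s}\,dD_s=\int_0^\eta e^{-\rho s}\,dD^c_s+\sum_{0\le s\le\eta,\,s\in\Lambda^D}e^{-\rho s}(D_{s+}-D_s),
\end{equation*}
so that it is enough to match its continuous and its purely discontinuous part separately against the two terms on the left-hand side.

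For the continuous part, the crucial observation is that $dD^c$ charges only the barrier set $\{X^D_s=b_{\epsilon_s}\}$. Indeed, property (iii) in Definitions \ref{dividend1} and \ref{dividend2} forces $dD$ to vanish on the open continuation region, while the lump-sum corrections at time $0$, at the jump times of $\epsilon$, and (for $D^{d_1,b,w}$) at the liquidation time are carried entirely by the discontinuous part; hence the only continuous action is the Skorokhod reflection at the upper barrier $b_{\epsilon_s}$, and in particular $D^c$ never charges the lower boundary $d_1$, where the policy liquidates rather than reflects. Since \eqref{dividendequation1}--\eqref{dividendequation2} give $w'(\cdot,i)\equiv1$ on $[b_i,\infty)$ and $w(\cdot,i)\in C^1(\theta_i,\infty)$ yields $w'(b_i,i)=1$, we obtain $w'(X^D_s,\epsilon_s)=1$ for $dD^c$-a.e. $s$, whence $\int_0^\eta e^{-\rho s}w'(X^D_s,\epsilon_s)\,dD^c_s=\int_0^\eta e^{-\rho s}\,dD^c_s$.

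For the jump part, I would show that at each $s\in\Lambda^D$ the surplus jumps \emph{downward}, from a point $X^D_s$ lying in the intervention region $\mathcal{O}(\epsilon_s)$ of the current regime to the nearest boundary point $X^D_{s+}\in\mathcal{B}(\mathcal{C}(\epsilon_s))$, with $D_{s+}-D_s=X^D_s-X^D_{s+}$. A short case distinction---reflection jumps from $[b_{\epsilon_s},\infty)$ down to $b_{\epsilon_s}$, and, for $D^{d_1,b,w}$, liquidation jumps from $(\theta_1,d_1]$ down to $\theta_1$ (using the boundary value $w(\theta_1,1)=0$)---shows that the whole interval $[X^D_{s+},X^D_s]$ lies in a region where $w'(\cdot,\epsilon_s)\equiv1$ by \eqref{dividendequation1}--\eqref{dividendequation2}. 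Hence
\begin{equation*}
w(X^D_s,\epsilon_s)-w(X^D_{s+},\epsilon_s)=\int_{X^D_{s+}}^{X^D_s}w'(y,\epsilon_s)\,dy=X^D_s-X^D_{s+}=D_{s+}-D_s,
\end{equation*}
so that $-\sum_{s}e^{-\rho s}\big(w(X^D_{s+},\epsilon_s)-w(X^D_s,\epsilon_s)\big)=\sum_{s}e^{-\rho s}(D_{s+}-D_s)$. Summing the continuous and jump identities and taking $\mathbb{E}_{x,i}[\cdot]$ yields the claim; since both terms on the left-hand side are nonnegative ($w$ being nondecreasing), no $\infty-\infty$ ambiguity arises even if the common value is infinite.

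The step I expect to be the main obstacle is the rigorous localisation of the continuous part $D^c$ on the barrier set together with the bookkeeping at the jump times of the Markov chain, where the left-/right-continuity conventions must be handled with care: at such a time $s$ the regime $\epsilon_s$ is already the new one whereas $X^D_s$ is still the pre-jump surplus, and one has to check that the induced lump-sum dividend is precisely the downward jump onto $\mathcal{B}(\mathcal{C}(\epsilon_s))$ described above, so that the slope-one argument applies with the correct regime index $\epsilon_s$.
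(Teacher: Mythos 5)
Your proof is correct. Note that the paper does not actually spell out an argument for this lemma: it simply refers to Lemma 4.1 of Sotomayor--Cadenillas (2011) and omits the details. The pathwise verification you give --- splitting $dD$ into $dD^c$ plus jumps, observing that $dD^c$ is supported on the reflection barrier $\{X^D_s=b_{\epsilon_s}\}$ where $w'=1$, and checking case by case that each lump-sum payment traverses an interval on which $w'(\cdot,\epsilon_s)\equiv 1$ (including the liquidation jump onto $\theta_1$, where $w(\theta_1,1)=0$) --- is exactly the standard argument that the citation encodes, and your handling of the two delicate points (that $D^c$ does not charge the liquidation boundary $d_1$, and that at a regime switch the relevant slope is taken in the post-switch regime $\epsilon_s$) together with the nonnegativity remark that justifies taking expectations makes the write-up complete.
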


\begin{proof}
	{The proof exploits arguments as those in the proof of Lemma 4.1 in \cite{sotomayor2011classical}. We omit details here for the sake of brevity.}
\end{proof}

\begin{theorem}\label{verificationtheorem}
{	Suppose that $w(\cdot, i) \in C^2((\theta_i,\infty) \backslash N_i) \cap C^1((\theta_i, \infty)), i \in \mathcal{S},$ for a finite subset $N_i \subset (\theta_i,\infty)$, is a nondecreasing solution (in the almost-everywhere sense) to the HJB equation (\ref{HJB}) with the boundary condition {$w(x,i)=0, \forall x\leq \theta_i, i \in \mathcal{S}$}. Then, }
\begin{align*}
w(x,i) \geq V(x,i).
\end{align*}

{Suppose also that $w$ is such that either there exists constants $b_i >\theta_i$, and $d_1$ as in Definition 3.1, or $b_i\geq \theta_i$ as in Definition 3.2.} Then, either the dividend strategies $D^{d_1,b,w}$ or $D^{b,w}$  is optimal. That is, 
\begin{align*}
J(x,i;D^{d_1,b,w}) = V(x,i) \quad \text{or} \quad J(x,i;D^{b,w})  = V(x,i)  .
\end{align*}

\end{theorem}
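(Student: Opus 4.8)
The plan is to derive both assertions from a single application of a generalized Dynkin (Itô) formula to the discounted process $s \mapsto e^{-\rho s} w(X^D_s, \epsilon_s)$, reading off the inequality in the first part and the equality in the second from the sign/complementarity structure of the HJB equation \eqref{HJB}. For the bound $w \geq V$ I would fix an arbitrary $D \in \mathcal{A}(x,i)$ and apply the change-of-variables formula, splitting $dD$ into its continuous part $dD^c$ and its jumps over $\Lambda^D$ as in the remark following Definition \ref{admissiblecontrol}. This produces the drift term $e^{-\rho s}(\mathcal{L} - \rho) w(X^D_s, \epsilon_s)\,ds$, the continuous control term $-e^{-\rho s} w'(X^D_s, \epsilon_s)\,dD^c_s$, the jump contributions $e^{-\rho s}\big(w(X^D_{s+}, \epsilon_s) - w(X^D_s, \epsilon_s)\big)$, and a local-martingale part (the Brownian integral plus the compensated jump martingale of $\epsilon$). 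Localizing by a sequence $\rho_n \uparrow \infty$ and stopping at $\tau \wedge T \wedge \rho_n$ removes the martingale terms in expectation. I then invoke the HJB inequalities $(\mathcal{L}-\rho)w \leq 0$ and $w' \geq 1$: the former makes the drift integral nonpositive, while $w' \geq 1$ gives $w'(X^D_s, \epsilon_s)\,dD^c_s \geq dD^c_s$ and, upon integrating $w'$ across each downward jump, $w(X^D_s, \epsilon_s) - w(X^D_{s+}, \epsilon_s) \geq \Delta D_s$. Since $w \geq 0$ (being nondecreasing with $w(\theta_i, i)=0$), dropping the nonnegative terminal term $\mathbb{E}_{x,i}[e^{-\rho\eta}w(X^D_\eta,\epsilon_\eta)]$ and letting $n, T \to \infty$ by monotone convergence yields $\mathbb{E}_{x,i}[\int_0^\tau e^{-\rho s}\,dD_s] \leq w(x,i)$, whence $V(x,i) \leq w(x,i)$ by arbitrariness of $D$.

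For optimality I would run the same computation along a candidate strategy $D \in \{D^{d_1,b,w}, D^{b,w}\}$ and show that every inequality above is an equality. Because the controlled state is confined to $\overline{\mathcal{C}(\epsilon_t)}$ by property (ii) of Definitions \ref{dividend1}–\ref{dividend2} and, since $\sigma_i>0$, the diffusion spends zero time at the finitely many boundary points, one has $(\mathcal{L}-\rho)w(X^D_s,\epsilon_s)=0$ for $ds\times d\mathbb{P}$-a.e. $(s,\omega)$, so the drift integral vanishes. The two remaining control terms are handled directly by Lemma \ref{lemma3.1}, which is precisely the identity $\mathbb{E}_{x,i}[\int_0^\eta e^{-\rho s}w'(X^D_s,\epsilon_s)\,dD^c_s] - \mathbb{E}_{x,i}[\sum_{0\le s\le\eta,\,s\in\Lambda^D} e^{-\rho s}(w(X^D_{s+},\epsilon_s)-w(X^D_s,\epsilon_s))] = \mathbb{E}_{x,i}[\int_0^\eta e^{-\rho s}\,dD_s]$. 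This reduces the Dynkin identity to $\mathbb{E}_{x,i}[e^{-\rho\eta}w(X^D_\eta,\epsilon_\eta)] = w(x,i) - \mathbb{E}_{x,i}[\int_0^\eta e^{-\rho s}\,dD_s]$ for $\eta = \tau\wedge T$.

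It then remains to pass to the limit $T\to\infty$ and establish the transversality condition $\mathbb{E}_{x,i}[e^{-\rho(\tau\wedge T)}w(X^D_{\tau\wedge T},\epsilon_{\tau\wedge T})] \to 0$. On $\{\tau<\infty\}$ the integrand converges to $e^{-\rho\tau}w(X^D_\tau,\epsilon_\tau)=0$, since at bankruptcy $X^D_\tau \le \theta_{\epsilon_\tau}$ and $w(\cdot,i)$ vanishes there; on $\{\tau=\infty\}$ the factor $e^{-\rho T}$ tends to $0$ while $w(X^D_T,\epsilon_T)$ remains bounded by $\max_i \sup_{\overline{\mathcal{C}(i)}} w < \infty$ because each $\overline{\mathcal{C}(i)}$ is bounded under these strategies. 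Dominated convergence then gives $w(x,i)=J(x,i;D)$, and combining with $J(x,i;D)\le V(x,i)\le w(x,i)$ from the first part closes the chain of equalities, proving optimality.

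I expect the main obstacle to be the rigorous justification of the generalized Dynkin formula used throughout: since $w(\cdot,i)$ is only $C^1$ with an a.e.\ second derivative that may fail to exist on the finite set $N_i$, one cannot quote the classical Itô formula directly. The clean route is to observe that $w\in C^1$ leaves no local-time (kink) contribution in the Meyer–Itô formula, so it holds with the a.e.-defined $w''$, the set $N_i$ being negligible because $\sigma_i>0$ forces the Brownian occupation time of the finite set $N_i$ to vanish; this is the same mechanism underpinning Lemma \ref{lemma3.1}, and a secondary care point is the integrability needed for the localization of the martingale terms, which is routine given $w \ge 0$ together with Fatou/monotone convergence.
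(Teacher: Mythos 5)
Your proposal is correct and follows essentially the same route as the paper's proof: a generalized Itô/Dynkin formula applied to $e^{-\rho s}w(X^D_s,\epsilon_s)$ with localization, the HJB inequalities $(\mathcal{L}-\rho)w\leq 0$ and $w'\geq 1$ (the latter integrated across jumps) to obtain $w\geq V$, and then Lemma \ref{lemma3.1} together with confinement of the controlled state to $\overline{\mathcal{C}(\epsilon_t)}$ to turn every inequality into an equality for the candidate strategies. Your explicit treatment of the transversality limit on $\{\tau=\infty\}$ via boundedness of $w$ on the bounded sets $\overline{\mathcal{C}(i)}$ is a detail the paper leaves implicit, but it is the same argument in substance.
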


\begin{proof}
Let $(x,i) \in \mathbb{R} \times \mathcal{S}$ be given and fixed.	We first show that $w(x,i) \geq V(x,i)$. Fix an arbitrary $D \in \mathcal{A}(x,i)$ and let $X^D$ be the corresponding state process, which is the semimartingale
\begin{align*}
X_t^D = x+ \int_0^t \mu_{\epsilon_s}\textrm{d}s+  \int_0^t \sigma_{\epsilon_s}\textrm{d}W_s-D_t^c- \sum_{0 \leq s \leq t, s \in\Lambda^D }(D_{s+}-D_s).
\end{align*}
Then, applying a generalized form of It\^o's formula (see, e.g., \cite{bjork1980finite}) to the process $\{ e^{-\rho (T\wedge \tau \wedge \eta_n)}w(X^D_{T\wedge \tau \wedge \eta_n},$ $ \epsilon_{T\wedge\tau \wedge \eta_n})$, $T \geq 0  \}$, with  $(\eta_n)_n$  being a sequence of stopping times $(\eta_n)_n$ diverging to infinity as $n \uparrow \infty$, we find that
\begin{align*}
e^{-\rho(T \wedge \tau \wedge \eta_n)}&w(X^D_{(T \wedge \tau \wedge \eta_n)+}, \epsilon_{T \wedge \tau \wedge \eta_n})=  w(X^D_0,\epsilon_0) + \int_0^{T \wedge \tau \wedge \eta_n} e^{-\rho s }(\mathcal{L}-\rho)w(X^D_s,\epsilon_s)\textrm{d}s\\
&+ \int_0^{T \wedge \tau \wedge \eta_n} e^{-\rho s }\sigma_{\epsilon_s}w'(X^D_s,\epsilon_s)\textrm{d}W_s -  \int_0^{T \wedge \tau \wedge \eta_n} e^{-\rho s }w'(X^D_s,\epsilon_s)\textrm{d}D_s^c \\  
&+ \sum_{0 \leq s \leq T\wedge \tau \wedge \eta_n, s \in\Lambda^D }(e^{-\rho s}w(X^D_{s+},\epsilon_s)-e^{-\rho s}w(X^D_s,\epsilon_s)) + M_{T \wedge {\tau} \wedge \eta_n}-M_0,
\end{align*}
where we have introduced the local martingale
\begin{align*}
	M_t := \int_{[0,t] \times[0,2]} e^{-\rho s}[ w(X_s,j)-w(X_s,\epsilon_{s})] \widetilde{\pi}(\textrm{d}s,\textrm{d}j), \quad t \geq 0.          
\end{align*}
Here,  $\widetilde{\pi}:=\pi -\nu $ is a compensated random measure (see, e.g., II.1.16 in \cite{jacod2013limit}), where $\pi(\textrm{d}t,\textrm{d}j)=\sum_{s \geq0}\mathbb{I}_{\{\Delta \epsilon_{s(\omega)}\neq0\}}{\delta}_{(s, \Delta \epsilon_s(\omega))}(\textrm{d}t,\textrm{d}j)$, with ${\delta}_{(s,\epsilon)}$ denoting the Dirac measure at the point ($s, \epsilon$), and the compensator $\nu$ is given by $	\nu(\textrm{d}t,\textrm{d}j)= p_{\epsilon_{t-} (j)}(-q_{\epsilon_{t-},\epsilon_{t-}}){\phi}(\textrm{d}j)\textrm{d}t = q_{\epsilon_{t-},j}{\phi}(\textrm{d}j)\textrm{d}t, \ j\in \mathcal{S},$ where $p_{\epsilon_{t-} (j)}= \frac{q_{\epsilon_{t-},j}}{-q_{\epsilon_{t-},\epsilon_{t-}}}=\mathbb{P}(\epsilon_{t}=j|\epsilon_{t-},\Delta \epsilon_t \neq 0)$, and $\phi$ is the counting measure on $\mathcal{S}.$  

Taking conditional expectations, we have
\begin{align}\label{ve1}
\begin{aligned}
&\mathbb{E}_{x,i}\big[e^{-\rho(T \wedge \tau \wedge \eta_n)}w(X^D_{(T \wedge \tau \wedge \eta_n)+}, \epsilon_{T \wedge \tau \wedge \eta_n})\big]= w(X^D_0,\epsilon_0) + \mathbb{E}_{x,i}\bigg[\int_0^{T \wedge \tau \wedge \eta_n} e^{-\rho s }(\mathcal{L}-\rho)w(X^D_s,\epsilon_s)\textrm{d}s\bigg]\\
&+ \mathbb{E}_{x,i}\bigg[\int_0^{T \wedge \tau \wedge \eta_n} e^{-\rho s }\sigma_{\epsilon_s}w'(X^D_s,\epsilon_s)\textrm{d}W_s\bigg] -  \mathbb{E}_{x,i}\bigg[\int_0^{T \wedge \tau \wedge \eta_n} e^{-\rho s }w'(X^D_s,\epsilon_s)\textrm{d}D_s^c \bigg]\\  
&+ \mathbb{E}_{x,i}\bigg[\sum_{0 \leq s \leq T\wedge \tau \wedge \eta_n, s \in\Lambda^D }(e^{-\rho s}w(X^D_{s+},\epsilon_s)-e^{-\rho s}w(X^D_s,\epsilon_s)) \bigg]+ \mathbb{E}_{x,i}\big[M_{T \wedge {\tau} \wedge \eta_n}-M_0\big].
\end{aligned}
\end{align}

The HJB equation (\ref{HJB}) guarantees that $(\mathcal{L}-\rho)w(x,i) \leq 0$ for almost all $x\in (\theta_i,\infty)$. Moreover, $w'(x,i) \geq 1$  for all $x \in (\theta_i, \infty)$ and the mean value theorem implies that $w(y_1,i)-w(y_2,i) \geq y_1-y_2$ for every $y_1,y_2 \in (\theta_i, \infty), y_1 >y_2,$ and for every $i \in \mathcal{S}.$ Hence, $w(X^D_{t+},\epsilon_t) -w(X^D_t, \epsilon_t) \leq X^D_{t+}-X^D_t. $ By observing that $X^D_{t+}-X^D_t = D_{t}-D_{t+}$, we then obtain from  (\ref{ve1}) that
\begin{align}\label{ve2}
\begin{aligned}
&\mathbb{E}_{x,i}\big[e^{-\rho(T \wedge \tau \wedge \eta_n)}w(X^D_{(T \wedge \tau \wedge \eta_n)+}, \epsilon_{T \wedge \tau \wedge \eta_n})\big] \leq  w(x,i) + \mathbb{E}_{x,i}\bigg[\int_0^{T \wedge \tau \wedge \eta_n} e^{-\rho s }\sigma_{\epsilon_s}w'(X^D_s,\epsilon_s)\textrm{d}W_s\bigg]\\ &-  \mathbb{E}_{x,i}\bigg[\int_0^{T \wedge \tau \wedge \eta_n} e^{-\rho s }\textrm{d}D_s^c\bigg] 
- \mathbb{E}_{x,i}\bigg[\sum_{0 \leq s \leq T\wedge \tau \wedge \eta_n, s \in\Lambda^D }e^{-\rho s}(D_{t+}-D_t)\bigg] + \mathbb{E}_{x,i}\big[M_{T \wedge \tau \wedge \eta_n}-M_0\big]\\
&=  w(x,i) + \mathbb{E}_{x,i}\bigg[\int_0^{T \wedge \tau \wedge \eta_n} e^{-\rho s }\sigma_{\epsilon_s}w'(X^D_s,\epsilon_s)\textrm{d}W_s\bigg] -  \mathbb{E}_{x,i}\bigg[\int_0^{T \wedge \tau \wedge \eta_n} e^{-\rho s }\textrm{d}D_s\bigg] + \mathbb{E}_{x,i}\big[M_{T \wedge \tau \wedge \eta_n}-M_0\big].
\end{aligned}
\end{align}

Taking now $(\eta_n)_n$ to be a localizing sequence for the martingale $(M_t+ \int_{0}^t  e^{-\rho s }\sigma_{\epsilon_s}w'(X^D_s,\epsilon_s)\textrm{d}W_s  )_t$, the expectations of the local martingale terms vanish.

Letting now $ n \uparrow \infty$ and $T \to +\infty,$ we get $T \wedge \tau \wedge \eta_n \to \tau$. By monotone convergence theorem and the fact that $w$ is nonnegative, {$X^D_{\tau} \leq \theta_{\epsilon_{\tau}}$} and {$w(x,i)= 0$ for every $x \leq \theta_i, i \in \mathcal{S}$}, we get
\begin{align}\label{ve3}
w(x,i)  \geq \mathbb{E}_{x,i}\big[e^{-\rho \tau }w(X^D_{\tau}, \epsilon_{\tau})\big]+  \mathbb{E}_{x,i}\bigg[\int_0^{\tau} e^{-\rho s }\textrm{d}D_s\bigg] = \mathbb{E}_{x,i}\bigg[\int_0^{\tau} e^{-\rho s }\textrm{d}D_s\bigg].
\end{align}
 
 Thus, by arbitrariness of $D \in \mathcal{A}(x,i)$, we find $w(x,i) \geq V(x,i)$.

 Now consider the dividend strategy $D^{d_1,b,w}$ defined in Definition \ref{dividend1}. From Lemma \ref{lemma3.1} and (ii) of Definition \ref{dividend1}, the inequality in (\ref{ve2}), and therefore also in (\ref{ve3}), becomes an equality. Hence
\begin{align}\label{ve4}
w(x,i)  =\mathbb{E}_{x,i}\big[e^{-\rho \tau }w(X^D_{\tau}, \epsilon_{\tau})\big]+  \mathbb{E}_{x,i}\bigg[\int_0^{\tau} e^{-\rho s }\textrm{d}D_s^{d_1,b,w}\bigg] =   \mathbb{E}_{x,i}\bigg[\int_0^{\tau} e^{-\rho s }\textrm{d}D_s^{d_1,b,w}\bigg] = J(x,i; D^{d_1,b,w}).
\end{align}

Since $w(x,i) = J(x,i; D^{d_1,b,w}) \leq V(x,i),$ then $w(x,i) =V(x,i).$ The argument is similar when we consider the dividend strategy $D^{b,w}$ in Definition \ref{dividend2}.

\end{proof}

\section{Optimal dividend policies}
Our objective in this section is to find the candidate solutions $w(x,i)$ that satisfy the conditions of Theorem \ref{verificationtheorem} and the corresponding free-boundaries for optimal strategies defined in Definitions \ref{dividend1} and \ref{dividend2}. We make the following assumption, that will be in force throughout the paper.

\begin{assume}
$\mu_1\leq 0$ and $\mu_2 \in \mathbb{R}$.
\end{assume}

The case $\mu_1>0$ and $\mu_2 \in \mathbb{R}$ can be treated with similar arguments. In particular, the case $\mu_1>0$ and $\mu_2>0$ with $\theta_1=\theta_2$ has been solved in \cite{sotomayor2011classical}.  Therefore we will restrict our attention to the case $\mu_1 \leq 0, \mu_2 \in \mathbb{R}$. 

Given that $\mu_i, i=1,2,$ can have different signs, we expect that the structure of the optimal dividend strategy varies when the drifts' values change. The mathematical analysis developed in this section provides all analytical solution to the problem. Before giving details, bearing in mind $b_1,b_2$ and $d_1$ as in Definitions \ref{dividend1} and \ref{dividend2}, we provide conjectures about the possible optimal dividend strategies.

\begin{itemize}
\item \textbf{Case (A)}: When $\mu_2<0$ and $|\mu_{2}|$ is sufficiently large, we expect that the optimal strategy is to pay out all the surplus as dividends in both regime 1 and regime 2, since $\mu_1 \leq 0$  as well.

\item \textbf{Case (B)}: When $|\mu_2|$ is sufficiently small, we expect that the optimal strategy is to continue the business in regime 2 and pay dividends according to a barrier strategy with barrier level $b_2$.  On the other hand,  since $\mu_1 \leq 0$, the optimal strategy in regime 1 is to immediately liquidate the company by paying all the surplus as dividends.

\item \textbf{Case (C)}: When $\mu_2>0$ is sufficiently large, we expect that the optimal strategy in regime 2 is similar to that in Case (B). Differently, if the surplus in regime 1 is sufficiently large (larger than $d_1$), then we expect that it may be optimal to continue the business in regime 1 and pay dividends according to a barrier strategy with barrier level $b_1.$ However, if the surplus in regime 1 is sufficiently small (not larger than $d_1$), it may be not worth waiting for a change into the profitable regime 2 and it is therefore optimal to liquidate the company.

\item \textbf{Case (D)}: When continuing to increase $\mu_2>0$, we expect that the optimal dividend strategies are qualitatively similar to Case (C) in both regimes. However, an increase of $\mu_2$ implies that the company might decide to postpone the liquidation option, since it might hope to jump back to the very profitable regime 2. Hence, $d_1$ eventually can decrease and become such that $d_1<\theta_2$. This fact leads to a novel mathematical analysis compared to that of Case (C).
\end{itemize}

According to the previous conjectures, we will now construct a suitable solution to HJB equation (\ref{HJB}) in each of the previous cases and then verify that, under easily verifiable sufficient conditions, our candidate solution is indeed the value function of the problem. The numerical analysis of Section 5 will then confirm the aforementioned structure of the value function and optimal policy with respect to the drift parameter $\mu_2$.

\subsection{Case (A)}

This is the easiest case. We conjecture that it is optimal to liquidate the company at time $0$ in both regimes when $\mu_1$ is not positive and $\mu_2$ is very negative. With regard to this, we guess that 
\begin{equation}\label{1wx1}
	w(x,1)= x-\theta_1, \ x\in[\theta_1,\infty)
\end{equation}
and
\begin{equation}\label{1wx2}
w(x,2)= \left\{
\begin{aligned}
&0,   &x&\in[\theta_1,\theta_2], \\
&x-\theta_2,  &x&\in(\theta_2, +\infty).
\end{aligned}
\right.
\end{equation}

\begin{theorem}\label{theorem4.1}
The function $w(x,i), i=1,2,$ given by (\ref{1wx1}) and (\ref{1wx2}) is the value function if and only if $ \mu_2 \leq (\theta_1-\theta_2)\lambda_2$. Moreover, it is optimal to pay immediately all the surplus as dividends in both regimes.
\end{theorem}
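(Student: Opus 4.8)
The plan is to derive both implications from the verification result, Theorem \ref{verificationtheorem}, reading the candidate $w$ in (\ref{1wx1})--(\ref{1wx2}) as the degenerate barrier strategy of Definition \ref{dividend2} with $b_1=\theta_1$ and $b_2=\theta_2$, so that both continuation regions $\mathcal{C}(i)$ are empty and $D^{b,w}$ reduces to paying out at once the whole surplus above the current bankruptcy level, i.e.\ immediate liquidation. First I would record the elementary structural facts: $w(\cdot,1)$ and $w(\cdot,2)$ on $(\theta_2,\infty)$ are affine with unit slope, so $w$ is nonnegative, nondecreasing, smooth on each $(\theta_i,\infty)$, and satisfies the boundary conditions $w(x,i)=0$ for $x\le\theta_i$; moreover $1-w'(x,i)\equiv 0$, so the gradient part of (\ref{HJB}) holds with equality and the only thing left to verify is the sign of $(\mathcal{L}-\rho)w$.

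For the sufficiency (``if'') direction I would simply compute $(\mathcal{L}-\rho)w(x,i)$ on each subinterval. In regime one, using $w(x,2)=0$ on $(\theta_1,\theta_2]$ and $w(x,2)=x-\theta_2$ on $(\theta_2,\infty)$, one finds $(\mathcal{L}-\rho)w(x,1)=\mu_1-(\lambda_1+\rho)(x-\theta_1)$ on the first piece and $(\mathcal{L}-\rho)w(x,1)=\mu_1+\lambda_1(\theta_1-\theta_2)-\rho(x-\theta_1)$ on the second; both are $\le 0$ for every $x>\theta_1$ because $\mu_1\le 0$ and $\theta_1<\theta_2$, so regime one never constrains the problem. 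In regime two one gets $(\mathcal{L}-\rho)w(x,2)=\mu_2+\lambda_2(\theta_2-\theta_1)-\rho(x-\theta_2)$, which is strictly decreasing in $x$; its supremum over $(\theta_2,\infty)$ is the right-hand limit $\mu_2+\lambda_2(\theta_2-\theta_1)$, and this is $\le 0$ precisely when $\mu_2\le(\theta_1-\theta_2)\lambda_2$. Under that inequality $w$ solves (\ref{HJB}) almost everywhere, so Theorem \ref{verificationtheorem} yields $w\ge V$; since the structure of Definition \ref{dividend2} with $b_i=\theta_i$ is met, the strategy $D^{b,w}$ of immediate liquidation is optimal and $w=V$.

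For the necessity (``only if'') direction I would argue by contraposition: assume $\mu_2>(\theta_1-\theta_2)\lambda_2$ and construct an admissible strategy that strictly beats immediate liquidation at some point, so that $V(\cdot,2)>w(\cdot,2)$ and $w$ cannot be the value function (note that in this regime $w$ is not even a supersolution of (\ref{HJB}), so the verification theorem is unavailable here). By continuity $(\mathcal{L}-\rho)w(x,2)>0$ on a right-neighbourhood $(\theta_2,\theta_2+\delta)$; fix $x$ there. Letting $Y$ be the uncontrolled surplus ($D\equiv 0$) and $\tau^0$ its bankruptcy time, consider the ``wait then liquidate'' policy $D^h$ that pays nothing on $[0,h\wedge\tau^0)$ and liquidates at $h\wedge\tau^0$. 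Since $w(y,i)=(y-\theta_i)^+$ equals the lump value obtained by liquidating (and vanishes at bankruptcy), the It\^o expansion of Theorem \ref{verificationtheorem}, now with $\textrm{d}D^c\equiv 0$ and no jumps of $D$, gives
\begin{align*}
J(x,2;D^h)-w(x,2)=\mathbb{E}_{x,2}\Big[\int_0^{h\wedge\tau^0}e^{-\rho s}(\mathcal{L}-\rho)w(Y_s,\epsilon_s)\,\textrm{d}s\Big].
\end{align*}
Dividing by $h$ and letting $h\downarrow 0$, the right-continuity of $s\mapsto(Y_s,\epsilon_s)$ at $0$ together with $\tau^0>0$ $\mathbb{P}_{x,2}$-a.s.\ forces the ratio to converge to $(\mathcal{L}-\rho)w(x,2)>0$; hence $J(x,2;D^h)>w(x,2)$ for small $h$, giving $V(x,2)>w(x,2)$ and the required contradiction.

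The routine part is the sign computation of $(\mathcal{L}-\rho)w$, which isolates the threshold $(\theta_1-\theta_2)\lambda_2$ cleanly as the binding constraint coming from regime two at $\theta_2^+$. The delicate part, and the main obstacle, is the necessity step: I must justify the Dynkin expansion in the presence of regime switches (so that the compensated-measure martingale has zero expectation under a suitable localizing sequence) and control the small-$h$ limit of the stopped integral, ensuring that the contribution of the events $\{\tau^0\le h\}$ and of possible jumps of $\epsilon$ on $[0,h]$ is negligible, so that the strictly positive value of $(\mathcal{L}-\rho)w(x,2)$ indeed survives in the limit.
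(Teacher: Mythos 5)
Your proof is correct. The sufficiency direction coincides with the paper's own argument in Appendix A.1: the same piecewise computation of $(\mathcal{L}-\rho)w$ on $(\theta_1,\theta_2]$ and $(\theta_2,\infty)$ for regime one and on $(\theta_2,\infty)$ for regime two, isolating $\mu_2+\lambda_2(\theta_2-\theta_1)\le 0$ as the binding constraint, followed by an appeal to Theorem \ref{verificationtheorem} with the degenerate barriers $b_i=\theta_i$. Where you genuinely diverge is the necessity direction. The paper disposes of it in one line (``\ldots is indeed the value function if and only if $\mu_2\le(\theta_1-\theta_2)\lambda_2$ by Theorem \ref{verificationtheorem}''), which strictly speaking only covers the ``if'' half, since the verification theorem says nothing about candidates that fail to be supersolutions of (\ref{HJB}). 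Your contrapositive argument --- exhibiting the ``wait then liquidate'' policy $D^h$ and showing via the Dynkin expansion that $J(x,2;D^h)-w(x,2)$ equals, to leading order, $h\,(\mathcal{L}-\rho)w(x,2)>0$ for $x$ in a right neighbourhood of $\theta_2$ --- supplies the missing implication explicitly, using only that $V\ge J(\cdot\,;D^h)$. The price is the extra technical work you correctly flag (localizing the compensated regime-switch martingale, and checking that the events $\{\tau^0\le h\}$ and of an early jump of $\epsilon$ contribute $o(1)$ after division by $h$, which follows from a linear-growth bound on $(\mathcal{L}-\rho)w$ and the $O(h)$ probability of a switch); the gain is a self-contained proof that $V(\cdot,2)>w(\cdot,2)$ near $\theta_2^+$ whenever $\mu_2>(\theta_1-\theta_2)\lambda_2$, rather than an implicit reliance on the verification theorem or on the later construction of Section 4.2.
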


\begin{proof}
	The proof is given in Appendix \ref{ptheorem4.1}.
\end{proof}

 It is worth noting that Theorem \ref{theorem4.1} holds if and only if $\mu_2\leq (\theta_1-\theta_2)\lambda_2<0$. A natural question is now what is the optimal dividend strategy when $(\theta_1-\theta_2)\lambda_2<\mu_2<0$. We will answer such a question in the next subsection.

\subsection{Case (B)}

According to the conjecture presented at the beginning of this section, the optimal dividend strategy in this case is expected to be of the \emph{barrier-type} as in Definition \ref{dividend2} with $b_1=\theta_1$ and $b_2>\theta_2$. Therefore, we consider  $\theta_1< \theta_2< b_2$ and guess that $w(x,1)$ satisfies
\begin{equation}\label{2wx1}
w(x,1)=x-\theta_1, \ x\in[\theta_1,\infty), 
\end{equation}
while $w(x,2)$ satisfies
\begin{align}\label{conb}
	\left\{
	\begin{aligned}
	&	w'(x,2)=1 , &\forall& x\in [b_2,\infty),\\
	& (\mathcal{L}-\rho)w(x,2)=0, &\forall& x\in (\theta_{2},b_2). \\
	\end{aligned}
	\right.
	\end{align}
Solving the equations in (\ref{conb}) thanks to the results in Lemma \ref{ODE}, we have 
	\begin{equation}\label{2wx2}
	w(x,2)= \left\{
	\begin{aligned}
	&0,  &x&\in[\theta_1,\theta_2], \\
	&C_1e^{\alpha_7x}+C_2e^{\alpha_8x}+\frac{\lambda_{2}\mu_2}{(\rho+\lambda_{2})^2}+\frac{\lambda_{2}(x-\theta_1)}{\rho+\lambda_{2}}, &x&\in(\theta_2,b_2),\\
	&x+K_1,  &x&\in[b_2, +\infty),
	\end{aligned}
	\right.
	\end{equation}	
	where $\alpha_7>0>\alpha_8$ are the real roots of the equation  $-\frac{1}{2}\sigma_2^2\alpha^2 -\mu_2\alpha +(\lambda_{2}+\rho)=0$ and the constants $C_1,C_2,K_1$ are to be determined.

To specify the parameters $C_1,C_2,K_1$ and $b_2$, we appeal to the so-called ``smooth fit principle" and to boundary conditions, which dictate that the candidate value function $w(x,2)$ should be $C^0$ in $\theta_{2}$ and $C^2$ in $b_2$. These conditions give rise to the system of equations
\begin{equation}\label{system1}
\left\{
\begin{aligned}
C_1e^{\alpha_7\theta_2}+C_2e^{\alpha_8\theta_2}+\frac{\lambda_{2}\mu_2}{(\rho+\lambda_{2})^2}+\frac{\lambda_{2}(\theta_2-\theta_1)}{\rho+\lambda_{2}}&=0,\\
C_1e^{\alpha_7b_2}+C_2e^{\alpha_8b_2}+\frac{\lambda_{2}\mu_2}{(\rho+\lambda_{2})^2}+\frac{\lambda_{2}(b_2-\theta_1)}{\rho+\lambda_{2}} &=b_2+K_1,\\
C_1\alpha_7e^{\alpha_7b_2}+C_2\alpha_8e^{\alpha_8b_2}+\frac{\lambda_{2}}{\rho+\lambda_{2}}&=1,\\
C_1\alpha_7^2e^{\alpha_7b_2}+C_2\alpha_8^2e^{\alpha_8b_2}&=0.
\end{aligned}
\right.
\end{equation}

The next results are concerned with showing that the system ($\ref{system1}$) leads to a unique solution $b_2>\theta_{2}$ and $w(x,i)$ as in (\ref{2wx1}) and (\ref{2wx2}) is indeed the value function.

\begin{lemma}\label{lemma4.1}
The system of equations (\ref{system1}) admits a unique solution $C_1, C_2,K_1, b_2$ with $b_2> \theta_2$ if and only if $\mu_2> (\theta_1-\theta_2)\lambda_2.$

\end{lemma}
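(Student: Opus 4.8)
The plan is to collapse the four equations of (\ref{system1}) into a single scalar equation in the free boundary $b_2$, and to settle existence and uniqueness by an elementary monotonicity argument. First I note that the second equation of (\ref{system1}) simply defines $K_1$ as an affine function of $C_1$, $C_2$ and $b_2$; it places no restriction and can be set aside, leaving the three remaining equations in the unknowns $C_1$, $C_2$, $b_2$. Next I would eliminate $C_1$ and $C_2$. The fourth (second-order smooth-fit) equation reads $C_1\alpha_7^2 e^{\alpha_7 b_2} = -C_2\alpha_8^2 e^{\alpha_8 b_2}$, which ties $C_1$ to $C_2$ and $b_2$. Substituting this relation into the third (first-order) equation and using $1-\tfrac{\lambda_2}{\rho+\lambda_2}=\tfrac{\rho}{\rho+\lambda_2}$, a short computation produces the closed-form expressions
\[
C_1=-\frac{\rho}{\rho+\lambda_2}\,\frac{\alpha_8}{\alpha_7(\alpha_7-\alpha_8)}\,e^{-\alpha_7 b_2},\qquad
C_2=\frac{\rho}{\rho+\lambda_2}\,\frac{\alpha_7}{\alpha_8(\alpha_7-\alpha_8)}\,e^{-\alpha_8 b_2},
\]
each determined uniquely by $b_2$.

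Inserting these into the first equation and writing $z:=b_2-\theta_2$ then leaves the single equation $F(z)=-R$, where
\[
F(z):=\frac{\rho}{(\rho+\lambda_2)(\alpha_7-\alpha_8)}\left(-\frac{\alpha_8}{\alpha_7}e^{-\alpha_7 z}+\frac{\alpha_7}{\alpha_8}e^{-\alpha_8 z}\right),\qquad
R:=\frac{\lambda_2\mu_2}{(\rho+\lambda_2)^2}+\frac{\lambda_2(\theta_2-\theta_1)}{\rho+\lambda_2}.
\]
The heart of the proof is the analysis of $F$ on $[0,\infty)$. Differentiating gives
\[
F'(z)=\frac{\rho}{(\rho+\lambda_2)(\alpha_7-\alpha_8)}\left(\alpha_8 e^{-\alpha_7 z}-\alpha_7 e^{-\alpha_8 z}\right),
\]
and since $\alpha_7>0>\alpha_8$ the bracket is strictly negative while the prefactor is positive, so $F$ is strictly decreasing; moreover $e^{-\alpha_8 z}\to+\infty$ together with $\alpha_7/\alpha_8<0$ forces $F(z)\to-\infty$ as $z\to\infty$. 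Hence $F$ maps $[0,\infty)$ continuously and strictly monotonically onto $(-\infty,F(0)]$, so $F(z)=-R$ admits a (necessarily unique) solution $z>0$ if and only if $-R<F(0)$, which also explains why $b_2>\theta_2$ is equivalent to $z>0$.

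It then remains to identify $F(0)$ and to simplify the threshold. Using the factorization $-\tfrac{\alpha_8}{\alpha_7}+\tfrac{\alpha_7}{\alpha_8}=\tfrac{(\alpha_7-\alpha_8)(\alpha_7+\alpha_8)}{\alpha_7\alpha_8}$ together with Vieta's formulas for the roots of $-\tfrac12\sigma_2^2\alpha^2-\mu_2\alpha+(\lambda_2+\rho)=0$, namely $\alpha_7+\alpha_8=-2\mu_2/\sigma_2^2$ and $\alpha_7\alpha_8=-2(\lambda_2+\rho)/\sigma_2^2$, one finds $F(0)=\tfrac{\rho\mu_2}{(\rho+\lambda_2)^2}$. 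The inequality $-R<F(0)$ then becomes $-\mu_2(\lambda_2+\rho)<\lambda_2(\theta_2-\theta_1)(\rho+\lambda_2)$, and dividing by $\rho+\lambda_2>0$ gives exactly $\mu_2>(\theta_1-\theta_2)\lambda_2$. Recovering $C_1$, $C_2$ from the displayed formulas and $K_1$ from the discarded second equation completes the unique solution. I expect the only delicate points to be the exponential bookkeeping in eliminating $C_1,C_2$ and the clean evaluation of $F(0)$ through Vieta; the monotonicity and the limit of $F$ follow immediately once the signs $\alpha_7>0>\alpha_8$ are exploited.
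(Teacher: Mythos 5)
Your proposal is correct and follows essentially the same route as the paper's proof: both eliminate $C_1,C_2$ via the third and fourth equations of (\ref{system1}), reduce to a single scalar equation for $b_2$, and conclude by strict monotonicity, the limit at infinity, and an evaluation of the boundary value via Vieta's formulas to obtain the threshold $\mu_2>(\theta_1-\theta_2)\lambda_2$. The only differences (solving for $C_1,C_2$ explicitly and substituting $z=b_2-\theta_2$) are presentational.
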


\begin{proof}
	The proof is given in Appendix \ref{plemma4.1}.
\end{proof}

\begin{theorem}\label{theorem4.2}
Suppose that $\mu_{2}>(\theta_{1}-\theta_{2})\lambda_{2}$.  Then the function $w(x,i), i=1,2,$ given by (\ref{2wx1}) and (\ref{2wx2}) is the value function $V(x,i)$ if and only if one of the following conditions holds:
\begin{enumerate}[(i)]
\item $w'(\theta_{2}+,2)\leq \frac{\lambda_{1}+\rho}{\lambda_{1}}$; 

\item $w'(\theta_{2}+,2)> \frac{\lambda_{1}+\rho}{\lambda_{1}} $ and $G(x_0)\leq 0$, where 
$G(x):=\mu_1-(\lambda_{1}+\rho)w(x,1)+\lambda_{1}w(x,2)$ and $x_0$ is the unique solution in $(\theta_2,b_2)$ of the equation $w'(x_0,2) = \frac{\lambda_{1}+\rho}{\lambda_{1}}.$
\end{enumerate}
Moreover, the dividend strategy $D^{b,w}$ with $b_1=\theta_1, b_2>\theta_2$ introduced in Definition \ref{dividend2} is optimal. 
\end{theorem}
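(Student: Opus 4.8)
The plan is to deduce everything from the Verification Theorem \ref{verificationtheorem}. It suffices to show that the pair $w=(w(\cdot,1),w(\cdot,2))$ defined in (\ref{2wx1})--(\ref{2wx2}) is a nondecreasing solution (in the a.e.\ sense) of the HJB equation (\ref{HJB}) with $w(x,i)=0$ for $x\le\theta_i$, and that its continuation and intervention regions are those of Definition \ref{dividend2} with $b_1=\theta_1$ and $b_2>\theta_2$; then $w=V$ and $D^{b,w}$ is optimal. By Lemma \ref{lemma4.1} (valid precisely because $\mu_2>(\theta_1-\theta_2)\lambda_2$), $w(\cdot,2)$ already solves $(\mathcal{L}-\rho)w(\cdot,2)=0$ on $(\theta_2,b_2)$ with a $C^2$ smooth fit at $b_2$, and $w(\cdot,1)=x-\theta_1$. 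Hence the verification reduces to the two variational inequalities of (\ref{HJB}) in each regime, and the whole content of the theorem is that the regime-$2$ inequalities hold unconditionally while the regime-$1$ inequality is equivalent to (i) or (ii).

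\textbf{Regime 2.} Here the decisive step is the concavity of $w(\cdot,2)$ on $(\theta_2,b_2)$. Differentiating the ODE twice shows that $p:=w''(\cdot,2)$ solves the homogeneous equation $\tfrac12\sigma_2^2 p''+\mu_2 p'-(\lambda_2+\rho)p=0$, whose zeroth-order coefficient is negative; the weak maximum principle then forces $p$ to keep the sign of $p(\theta_2+)$, since $p(b_2)=0$ by the $C^2$ fit to the linear branch. A direct computation gives $p(\theta_2+)=\tfrac{2}{\sigma_2^2}\big(-\mu_2\,w'(\theta_2+,2)-\lambda_2(\theta_2-\theta_1)\big)$, and I would rule out the convex alternative $p\ge 0$ as follows: $p\ge0$ would give $w'$ increasing with $w'(\theta_2+,2)<w'(b_2,2)=1$, whence $-\mu_2 w'(\theta_2+,2)<-\mu_2$ and therefore $\mu_2<\lambda_2(\theta_1-\theta_2)$, contradicting the standing hypothesis $\mu_2>(\theta_1-\theta_2)\lambda_2$. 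Thus $p\le 0$, $w(\cdot,2)$ is concave, and $w'(x,2)\ge w'(b_2,2)=1$ on $(\theta_2,b_2)$, i.e.\ $1-w'(\cdot,2)\le0$ and $\mathcal{C}(2)=(\theta_2,b_2)$. For $x\ge b_2$ one has $w'(x,2)=1$ and $H(x):=(\mathcal{L}-\rho)w(x,2)$ is affine with slope $-\rho<0$; since the $C^2$ fit gives $H(b_2)=0$, we get $H\le0$ on $[b_2,\infty)$, i.e.\ $\mathcal{O}(2)=[b_2,\infty)$. None of this uses (i) or (ii).

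\textbf{Regime 1 (the crux).} Because $w(x,1)=x-\theta_1$ has $w'\equiv1$, $w''\equiv0$, the gradient constraint $1-w'(\cdot,1)=0$ holds identically and a one-line computation yields the identity $(\mathcal{L}-\rho)w(x,1)=\mu_1-(\lambda_1+\rho)w(x,1)+\lambda_1 w(x,2)=G(x)$. Thus the regime-$1$ HJB equation holds iff $G\le0$ on $(\theta_1,\infty)$. On $(\theta_1,\theta_2]$ we have $w(\cdot,2)\equiv0$, so $G(x)=\mu_1-(\lambda_1+\rho)(x-\theta_1)$ is decreasing with $G(\theta_1)=\mu_1\le0$, hence $G\le0$ there automatically. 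On $(\theta_2,\infty)$ I would differentiate to get $G'(x)=\lambda_1\big(w'(x,2)-\tfrac{\lambda_1+\rho}{\lambda_1}\big)$ and use the monotonicity of $w'(\cdot,2)$ from the previous step: if $w'(\theta_2+,2)\le\tfrac{\lambda_1+\rho}{\lambda_1}$ then $G'\le0$, so $G$ decreases from $G(\theta_2)<0$ and stays nonpositive --- this is case (i); otherwise $w'(\cdot,2)$ crosses the level $\tfrac{\lambda_1+\rho}{\lambda_1}$ at the unique $x_0\in(\theta_2,b_2)$, $G$ increases on $(\theta_2,x_0)$ and decreases on $(x_0,\infty)$, so $\max_{(\theta_2,\infty)}G=G(x_0)$ and $G\le0\iff G(x_0)\le0$ --- this is case (ii). This proves the ``if'' direction and, at the same time, that (i) or (ii) is exactly equivalent to the regime-$1$ inequality.

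\textbf{Only if, and conclusion.} For the converse suppose neither (i) nor (ii) holds; then $x_0$ exists and $G(x_0)>0$, i.e.\ $(\mathcal{L}-\rho)w(x_0,1)>0$, so by continuity $(\mathcal{L}-\rho)w(\cdot,1)>0$ on a regime-$1$ neighbourhood of $x_0$. Starting from $(x_0,1)$ and paying no dividends until the stopping time $\sigma$ at which the surplus leaves that neighbourhood or the regime switches, the Dynamic Programming Principle gives $V(x_0,1)\ge\mathbb{E}_{x_0,1}[e^{-\rho\sigma}V(X_\sigma,\epsilon_\sigma)]$; if we had $V=w$, the Dynkin expansion used in the proof of Theorem \ref{verificationtheorem} (with the local-martingale terms localized away) would give $\mathbb{E}_{x_0,1}[e^{-\rho\sigma}w(X_\sigma,\epsilon_\sigma)]=w(x_0,1)+\mathbb{E}_{x_0,1}[\int_0^\sigma e^{-\rho s}(\mathcal{L}-\rho)w(X_s,1)\,\mathrm{d}s]>w(x_0,1)$, contradicting $V(x_0,1)=w(x_0,1)$. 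Hence $w\ne V$ when both conditions fail. Finally, under (i) or (ii) all branches of (\ref{HJB}) hold, $w$ is nondecreasing, and the regions are $\mathcal{C}(1)=\emptyset$, $\mathcal{O}(1)=[\theta_1,\infty)$, $\mathcal{C}(2)=(\theta_2,b_2)$, $\mathcal{O}(2)=[b_2,\infty)$ --- exactly Definition \ref{dividend2} with $b_1=\theta_1$ --- so Theorem \ref{verificationtheorem} yields $w=V$ and the optimality of $D^{b,w}$. I expect the main obstacle to be the rigorous concavity argument for $w(\cdot,2)$: the reduction of the regime-$1$ inequality to the single threshold conditions (i)--(ii) relies entirely on $w'(\cdot,2)$ being monotone, and monotonicity in turn hinges on pinning down the sign of $w''(\theta_2+,2)$, which is exactly where the standing assumption $\mu_2>(\theta_1-\theta_2)\lambda_2$ must be invoked.
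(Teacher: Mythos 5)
Your overall architecture matches the paper's: establish concavity of $w(\cdot,2)$ on $(\theta_2,b_2)$ to get $w'(\cdot,2)\ge 1$ there, use the affine function with slope $-\rho$ past $b_2$, and reduce the regime-$1$ inequality to the sign of $G$ at its unique interior maximum, which yields exactly the dichotomy (i)/(ii). The regime-$1$ analysis and the treatment of $[b_2,\infty)$ are correct and essentially identical to the paper's Steps 2--3.

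The gap is in the concavity step, which you yourself flag as the crux. Your maximum-principle dichotomy (either $w''(\cdot,2)\le 0$ throughout or $w''(\cdot,2)\ge 0$ throughout, according to the sign of $w''(\theta_2+,2)$) is fine, and your formula $w''(\theta_2+,2)=\tfrac{2}{\sigma_2^2}\bigl(-\mu_2 w'(\theta_2+,2)-\lambda_2(\theta_2-\theta_1)\bigr)$ is correct. But the step ``$w'(\theta_2+,2)<1$ whence $-\mu_2 w'(\theta_2+,2)<-\mu_2$'' multiplies an inequality by $-\mu_2$ without checking its sign: it is valid only when $\mu_2<0$. For $\mu_2>0$ (which is allowed under the standing hypothesis $\mu_2>(\theta_1-\theta_2)\lambda_2$, and is in fact the regime where most of the numerical Case (B) examples live) the inequality reverses, and the convex alternative only forces $w'(\theta_2+,2)<0$, which you have no means of excluding at this stage without circularly assuming $w$ is nondecreasing. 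So concavity — and with it the monotonicity of $w'(\cdot,2)$ on which the whole reduction to (i)/(ii) rests — is not established for $\mu_2>0$. The paper avoids this entirely: the proof of Lemma \ref{lemma4.1} shows $C_1>0$ and $C_2<0$, whence $w'''(x,2)=C_1\alpha_7^3e^{\alpha_7 x}+C_2\alpha_8^3e^{\alpha_8 x}>0$, so $w''(\cdot,2)$ is increasing and, with $w''(b_2,2)=0$, nonpositive on $(\theta_2,b_2)$. Replacing your boundary-sign argument by this observation repairs the proof; the rest (including your explicit ``only if'' argument via the dynamic programming principle, which the paper leaves implicit in its chain of equivalences) goes through.
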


\begin{proof}
	The proof is given in Appendix \ref{ptheorem4.2}.
\end{proof}

\begin{remark}\label{remark4.1}
A sufficient condition for the optimality of $w(x,i), i=1,2,$ as in (\ref{2wx1}) and (\ref{2wx2}) is $(\theta_{1}-\theta_{2})\lambda_{2}<\mu_{2}\leq Q:=[(\theta_2-\theta_1)[(\lambda_1+\rho)(\lambda_2+\rho)-\lambda_1\lambda_2]-(\lambda_2+\rho)\mu_1]/\lambda_1>0$. This can be easily shown by studying the function $G$ of $(ii)$ in Theorem \ref{theorem4.2}.
\end{remark}

It is worth noting that, even if $\mu_1$ and $\mu_2$ are both negative, the optimal strategy in regime 2 is to continue the business rather than liquidating the company immediately. The reason lies in the fact that the bankruptcy levels are such that $\theta_{1}< \theta_{2}$, which leads the company to exploit the probability of jumping to regime 1 and then liquidate. Indeed, in regime 2, paying out all dividends at time $0$ would result into a profit $(x-\theta_2)^+$, while employing a barrier strategy the company will accrue the profit due to the singularly continuous payments and eventually, in case of jumping to regime 1, a lump sum payment to the level $\theta_1<\theta_2$. Notice that this structure of the optimal dividend policy cannot be observed in the case $\theta_{1}=\theta_{2}$.

\subsection{Case (C)}

In this case, we shall see that the optimal dividend strategy in regime 1 is completely different from the previous two cases even though we still have $\mu_1 \leq 0$. However, as mentioned before, continuing the business in regime 1 is possible only when the surplus in regime 1 is larger than $d_1$. Therefore, the optimal dividend strategy in the case is expected to be of the \emph{liquidation-barrier-type} presented in Definition \ref{dividend1}.  We discuss the case $ \theta_1<\theta_2<d_1<b_2<b_1$ and guess that $w(x,i)$ satisfies (\ref{dividendequation1}). 

First of all, we consider $x \in [\theta_{1},\theta_{2}]$ and postulate that $ w(x,1) = x-\theta_{1}, w(x,2)=0.$ 

Then, for $x\in(\theta_2,d_1]$, we still have $ w(x,1) = x-\theta_{1}$,  and  
	\begin{equation}\label{4.7}
	\frac{1}{2}\sigma_2^2w''(x,2) +\mu_2w'(x,2) -(\lambda_{2}+\rho)w(x,2) +\lambda_{2}w(x,1) =0.
	\end{equation}
	Solving (\ref{4.7}) thanks to Lemma \ref{ODE}, we find
	\begin{equation*}
	w(x,2)=  C_1e^{\alpha_7x}+C_2e^{\alpha_8x}+\frac{\lambda_{2}\mu_2}{(\rho+\lambda_{2})^2}+\frac{\lambda_{2}(x-\theta_{1})}{\rho+\lambda_{2}},
	\end{equation*}
	where $\alpha_7>0>\alpha_8$ are the real roots of the equation  $-\frac{1}{2}\sigma_2^2\alpha^2 -\mu_2\alpha +(\lambda_{2}+\rho)=0$ and $C_1,C_2$ are constants to be found.

Next we consider $x \in (d_1,b_2)$ and we conjecture that
	\begin{equation}\label{4.8}
	\frac{1}{2}\sigma_1^2w''(x,1) +\mu_1w'(x,1) -(\lambda_{1}+\rho)w(x,1) +\lambda_{1}w(x,2) =0
	\end{equation}
	and
	\begin{equation}\label{4.9}
	\frac{1}{2}\sigma_2^2w''(x,2) +\mu_2w'(x,2) -(\lambda_{2}+\rho)w(x,2) +\lambda_{2}w(x,1) =0.
	\end{equation}
	Solving the previous two equations (cf. Lemma \ref{ODE}), we get
	\begin{equation*}
	w(x,1)=  \widehat{C_3}e^{\alpha_3x}+\widehat{C_4}e^{\alpha_4x}+\widehat{C_5}e^{\alpha_5x}+\widehat{C_6}e^{\alpha_6x}
	\end{equation*}
	and
	\begin{equation*}
	w(x,2)=  C_3e^{\alpha_3x}+C_4e^{\alpha_4x}+C_5e^{\alpha_5x}+C_{6}e^{\alpha_6x},
	\end{equation*}
	where the real numbers $\alpha_3<\alpha_4<0<\alpha_5<\alpha_6$ are the roots of $\phi_1(\alpha)\phi_2(\alpha)-\lambda_{1}\lambda_{2}=0$, and $\phi_i=-\frac{1}{2}\sigma_i^2\alpha^2-\mu_i\alpha+(\lambda_{i}+\rho), i=1,2.$ Also, 
\begin{align}\label{relation1}
C_{j}=\frac{\phi_1(\alpha_j)}{\lambda_{1}}\widehat{C_{j}}=\frac{\lambda_{2}}{\phi_2(\alpha_j)}\widehat{C_j}, \  j=3,4,5,6.
\end{align}

	 Next, we consider $x \in[b_2,b_1).$ Since we expect that $ 1-w'(x,2)=0$, we have $w(x,2)= x+K_1$ for some $K_1 \in \mathbb{R}$. Furthermore, $w(x,1)$ should satisfy
	\begin{equation}\label{4.11}
	\frac{1}{2}\sigma_1^2w''(x,1) +\mu_1w'(x,1) -(\lambda_{1}+\rho)w(x,1) +\lambda_{1}w(x,2) =0.
	\end{equation}
	Solving (\ref{4.11}) with the help of  Lemma \ref{ODE}, we get
	\begin{equation*}
	w(x,1) = C_{7}e^{\alpha_1x}+C_{8}e^{\alpha_2x}+\frac{\lambda_{1}\mu_1}{(\rho+\lambda_{1})^2}+\frac{\lambda_{1}(x+K_1)}{\rho+\lambda_{1}},
	\end{equation*}
	where $\alpha_1>0>\alpha_2$ are the real roots of the equation  $-\frac{1}{2}\sigma_1^2\alpha -\mu_1\alpha +(\lambda_{1}+\rho)=0$.

	Finally, we take $x \in[b_1,+\infty)$. Then from $1-w'(x,1)=0$ and $1-w'(x,2)=0,$ we have $w(x,1)= x+K_2, w(x,2)=x+K_1$, for suitable $K_1, K_2$.
	
	Collecting the above results, in the case $ \theta_{1}<\theta_{2}<d_1<b_2<b_1$, we expect that
	\begin{equation}\label{3wx1}
	w(x,1)= \left\{
	\begin{aligned}
	&x-\theta_{1}, & x&\in[\theta_{1},d_1],\\
	&\widehat{C_3}e^{\alpha_3x}+\widehat{C_4}e^{\alpha_4x}+\widehat{C_5}e^{\alpha_5x}+\widehat{C_6}e^{\alpha_6x}, &x&\in(d_1,b_2), \\
	&C_{7}e^{\alpha_1x}+C_{8}e^{\alpha_2x}+\frac{\lambda_{1}\mu_1}{(\rho+\lambda_{1})^2}+\frac{\lambda_{1}(x+K_1)}{\rho+\lambda_{1}}, &x&\in[b_2,b_1),\\
		&x+K_2,  &x&\in[b_1, +\infty),
	\end{aligned}
	\right.
	\end{equation}
	and
	\begin{equation}\label{3wx2}
	w(x,2)= \left\{
	\begin{aligned}
	&0,  &x&\in[\theta_1,\theta_2], \\
	&C_{1}e^{\alpha_7x}+C_{2}e^{\alpha_8x}+\frac{\lambda_{2}\mu_2}{(\rho+\lambda_{2})^2}+\frac{\lambda_{2}(x-\theta_{1})}{\rho+\lambda_{2}}, &x&\in(\theta_{2},d_1],\\
	& C_3e^{\alpha_3x}+C_4e^{\alpha_4x}+C_5e^{\alpha_5x}+C_{6}e^{\alpha_6x}, &x&\in(d_1,b_2),\\
	&x+K_1,  &x&\in[b_2, +\infty).
	\end{aligned}
	\right.
	\end{equation}
	
Similarly to what we have done in Case (B), we determine the unknown coefficients $C_i, i=1,...,8, $ and $d_1,b_2,b_1,K_1,K_2$  by boundary conditions and smooth fit principle. That is, we assume $w(x,1)$ is $C^1$ in $d_1$ and $b_2$, $w(x,1)$ is $C^2$ in $b_1$; i.e.,
	\begin{align}\label{system31}
\begin{aligned}
	&w(d_1,1) =w(d_{1}+,1), w'(d_1,1) =w'(d_{1}+,1),\\
&w(b_2,1) =w(b_{2}-,1), w'(b_2,1) =w'(b_{2}-,1),\\
	&w(b_1,1)=w(b_{1}-,1), w'(b_1,1)=w'(b_{1}-,1), w''(b_1,1)=w''(b_{1}-,1).
\end{aligned}
	\end{align}
Analogously, we impose that $w(x,2)$ is $C^0$ in $\theta_{2}$, $w(x,2)$ is $C^1$ in $d_1$, $w(x,2)$ is $C^2$ in $b_2$; i.e.,
	\begin{align}\label{system32}
\begin{aligned}
	&w(\theta_{2},2)=w(\theta_{2}+,2), w(d_1,2) =w(d_{1}+,2),w'(d_1,2) =w'(d_{1}+,2),\\
	&w(b_2,2)=w(b_{2}-,2), w'(b_2,2)=w'(b_{2}-,2), w''(b_2,2)=w''(b_{2}-,2).
\end{aligned}	
\end{align}

\begin{remark}\label{non}
Note that system $(\ref{system31})$ and $(\ref{system32})$ is a non-linear system of thirteen equations for thirteen coefficients. By means of simple but lengthy algebra, it is possible to reduce $(\ref{system31})$-$(\ref{system32})$ to a system of three equations for $b_1,b_2$ and $d_1$. Clearly, the ten coefficients $C_1,...,C_8$ and $K_1,K_2$ will be then given in terms of $d_1,b_1,b_2$. However, since the explicit expression of the reduced system is highly unhandy, we refrain from presenting it here. It is worth stressing that, we can neither provide existence nor uniqueness of the solution to system $(\ref{system31})$-$(\ref{system32})$. Nevertheless, we investigate a concrete example in Section 5, in which we solve the system numerically and provide the values of the free-boundaries and their dependencies on relevant parameters.
\end{remark}

The next theorem verifies that $w(x,i)$ as in (\ref{3wx1}) and (\ref{3wx2}) is indeed the value function.  

\begin{theorem}\label{theorem4.3}
Suppose that $\mu_{2}\geq 0$ {and assume that there exist $d_1, b_i, K_i, i=1,2, C_j, j=1,...,8, C_6\neq0$ solving the system of equations (\ref{system31}) and (\ref{system32}) and being} such that $w'(\theta_{2}+,2)\geq1$ and $\theta_1<\theta_2<d_1<b_2<b_1$. Let $\widehat{C_i}, i=3,...,6,$ be defined by (\ref{relation1}). Then the function $w(x,i), i=1,2,$ given by (\ref{3wx1}) and (\ref{3wx2}) is the value function $V(x,i)$  if and only if either of the following conditions holds:
\begin{enumerate}[(i)]
 \item  $w'(\theta_{2}+,2)\leq \frac{\lambda_{1}+\rho}{\lambda_{1}}$; 

\item $w'(\theta_{2}+,2)> \frac{\lambda_{1}+\rho}{\lambda_{1}} $ and $\exists \ x_0 \in (\theta_2,d_1)$ such that $w'(x_0,2) = \frac{\lambda_{1}+\rho}{\lambda_{1}}$ and  $H(x_0)\leq 0$ with $H(x):=\mu_1-(\lambda_{1}+\rho)w(x,1)+\lambda_{1}w(x,2)$;

\item $w'(\theta_{2}+,2)> \frac{\lambda_{1}+\rho}{\lambda_{1}} $ and $H'(d_1)\geq 0$.
\end{enumerate}
Moreover, the associated dividend strategy $D^{d_1,b,w}$ introduced in Definition \ref{dividend1} is optimal.
\end{theorem}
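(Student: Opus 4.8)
The plan is to invoke the verification theorem (Theorem \ref{verificationtheorem}). The candidate $w=(w(\cdot,1),w(\cdot,2))$ in (\ref{3wx1})--(\ref{3wx2}) is built so that, through the smooth-fit system (\ref{system31})--(\ref{system32}), it is $C^1$ on $(\theta_i,\infty)$, is $C^2$ off the finite set of free boundaries, satisfies the equalities in (\ref{dividendequation1}), and meets $w(\cdot,i)=0$ on $(-\infty,\theta_i]$. Hence the whole content of the theorem reduces to checking the two families of HJB inequalities that are \emph{not} imposed by construction: the gradient constraint $w'(x,i)\geq 1$ on the continuation regions $\mathcal{C}(1)=(d_1,b_1)$ and $\mathcal{C}(2)=(\theta_2,b_2)$, and $(\mathcal{L}-\rho)w(x,i)\leq 0$ on the intervention regions $(\theta_1,d_1]\cup[b_1,\infty)$ and $[b_2,\infty)$. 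Once these hold, Theorem \ref{verificationtheorem} yields $w\geq V$ together with $J(x,i;D^{d_1,b,w})=w(x,i)$, whence $w=V$ and $D^{d_1,b,w}$ is optimal.

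The analytic engine is a weak maximum principle for the curvature of $w$. On the overlap $(d_1,b_2)$ both components solve the coupled system (\ref{4.8})--(\ref{4.9}); differentiating twice, $p_i:=w''(\cdot,i)$ solves the homogeneous system $\tfrac12\sigma_i^2 p_i''+\mu_i p_i'-(\lambda_i+\rho)p_i+\lambda_i p_{3-i}=0$. This system is cooperative (the off-diagonal couplings $\lambda_i$ are nonnegative) with row sums $-\rho<0$, so at a candidate positive interior maximum $M>0$ of some $p_i$ the equation forces $\tfrac12\sigma_i^2 p_i''\geq\rho M>0$, and symmetrically a negative interior minimum forces a strictly negative value, both contradicting the second-order condition; hence each $p_i$ lies between its boundary values and $0$. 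Feeding in the pasting data, in particular $w''(b_2,2)=0$ with $w'(b_2,2)=1$, gives the concavity of $w(\cdot,2)$ on $(\theta_2,b_2)$ and therefore $w'(\cdot,2)\geq 1$ there. In regime $1$ on $(d_1,b_1)$ global concavity must fail, because $w'(d_1+,1)=w'(b_1-,1)=1$ while $w'>1$ is expected inside; the shape is instead convex near $d_1$ and concave near $b_1$, and one establishes $w'(\cdot,1)\geq 1$ from the differentiated coupled system together with the sign information on $w'(\cdot,2)$ just obtained. In particular $w''(d_1+,1)\geq 0$, equivalently $H(d_1)\leq 0$, where $H(x):=\mu_1-(\lambda_1+\rho)w(x,1)+\lambda_1 w(x,2)$.

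With the gradient constraint in hand, the intervention-region inequalities are checked as follows. On the liquidation region $(\theta_1,d_1]$ one has $w(\cdot,1)=x-\theta_1$, so $(\mathcal{L}-\rho)w(\cdot,1)=H$ with $H'(x)=\lambda_1\bigl(w'(x,2)-m\bigr)$, where $m:=\tfrac{\lambda_1+\rho}{\lambda_1}>1$. Since $w'(\cdot,2)$ is decreasing, the location of the maximum of $H$ is governed precisely by the position of $w'(\cdot,2)$ relative to $m$, and the three conditions (i)--(iii) exhaust the possibilities: in (i) $H$ is nonincreasing with $H\leq H(\theta_1)=\mu_1\leq 0$; in (ii) $H$ rises then falls with interior maximum $H(x_0)\leq 0$; in (iii) $w'(\cdot,2)$ stays above $m$ up to $d_1$, so $H$ is nondecreasing with maximum $H(d_1)\leq 0$, the last inequality being exactly the convexity $w''(d_1+,1)\geq 0$ from the previous paragraph; the segment $(\theta_1,\theta_2]$, where $w(\cdot,2)=0$, is immediate from $\mu_1\leq 0$. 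On the unbounded regions $[b_1,\infty)$ and $[b_2,\infty)$, $(\mathcal{L}-\rho)w$ is affine in $x$ with slope $-\rho$ (plus a controlled term on $[b_2,b_1)$); it vanishes at the left endpoint by the $C^2$ pasting and decreases to $-\infty$, so it stays $\leq 0$, the requisite monotonicity again following from the curvature analysis.

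Necessity is the converse reading of the same computation: if $w=V$ then $V$ is a supersolution of (\ref{HJB}), so $(\mathcal{L}-\rho)w(\cdot,1)=H\leq 0$ on $(\theta_1,d_1]$, and by the case analysis above this is equivalent to the disjunction (i)$\vee$(ii)$\vee$(iii); conversely, if all three fail then $H(x)>0$ at some $x\in(\theta_1,d_1]$ where $w'=1$, so $\max\{(\mathcal{L}-\rho)w,1-w'\}>0$ there, violating (\ref{HJB}) and forcing $w\neq V$. The step I expect to be genuinely hard is the curvature analysis, and within it the gradient constraint $w'(\cdot,1)\geq 1$ on $(d_1,b_1)$: there global concavity is unavailable, so one must pin down the signs of the second derivatives of $w$ at the free boundaries accurately enough to close the coupled maximum principle and then transfer that information to $w'$ on a region where $w'$ is not monotone.
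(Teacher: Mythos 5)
Your architecture is the right one and matches the paper's: reduce everything to the verification theorem, check the gradient constraint on the continuation regions and $(\mathcal{L}-\rho)w\leq 0$ on the intervention regions, and read off (i)--(iii) as the trichotomy for $\max_{(\theta_2,d_1]}H\leq 0$ using that $H$ is concave, $H(\theta_2)<0$, and $H(d_1)=-\tfrac12\sigma_1^2w''(d_1+,1)\leq 0$. The handling of $(\theta_1,\theta_2]$, of $[b_1,\infty)$ and $[b_2,\infty)$, and the necessity direction all coincide with the paper's Steps 2-(a), 2-(c), 3-(b), 3-(c) and 2-(e).

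The gap is in the curvature analysis, which you correctly flag as the hard step but do not close, and the sketch you give would not close it. First, the cooperative-system maximum principle applied to $p_i=w''(\cdot,i)$ on the overlap $(d_1,b_2)$ bounds \emph{each} component by the positive part of the boundary data of \emph{all} components (the contradiction at an interior maximum needs $p_{3-i}(x^*)\leq M$, so $M$ must be the maximum over both components); since $w''(d_1+,1)\geq 0$ — and you need it to be, as you note, for $H(d_1)\leq 0$ — this argument cannot deliver $w''(\cdot,2)\leq 0$ on $(d_1,b_2)$. Second, concavity of $w(\cdot,2)$ is needed on all of $(\theta_2,b_2)$, and on $(\theta_2,d_1]$ the coupled system does not hold (there $w(\cdot,1)=x-\theta_1$); the paper's Step 1 instead proves $w''(\theta_2+,2)\leq 0$ by an explicit sign computation on the coefficients $C_1,C_2$ of $e^{\alpha_7x},e^{\alpha_8x}$, and this is precisely where the hypotheses $\mu_2\geq0$ and $w'(\theta_2+,2)\geq1$ enter — hypotheses your curvature argument never uses; the sign is then propagated to the whole interval by a Feynman--Kac representation of $w''(\cdot,2)$, whose other boundary value vanishes at $b_2$. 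Third, for $w'(\cdot,1)\geq1$ on $(d_1,b_2)$ the paper does not use a coupled maximum principle at all: it substitutes $w''(\cdot,2)\leq0$ to obtain the scalar differential inequality $\tfrac12\sigma_1^2w^{(4)}(\cdot,1)+\mu_1w'''(\cdot,1)-(\lambda_1+\rho)w''(\cdot,1)\geq0$, invokes a bound on the number of real zeros of the exponential sum $w''(\cdot,1)$ (at most four, by the lemma on zeros of exponential sums) to split $(d_1,b_2)$ into finitely many subintervals, applies the scalar weak maximum principle on each, and establishes $w''(d_1+,1)\geq0$ by a separate contradiction argument using $w'(d_1+,1)=1$. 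None of these ingredients appears in your proposal, so the two assertions on which everything else rests — $w''(\cdot,2)\leq0$ on $(\theta_2,b_2)$ and $w'(\cdot,1)\geq1$ on $(d_1,b_1)$ — remain unproved.
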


\begin{proof}
The proof is given in Appendix \ref{ptheorem4.3}.
\end{proof}

 It is interesting to point out that dividend payments can also occur just because of a change in regime. In particular, lumpy dividend payouts arise: (a) when the level of the cash surplus falls in the interval $(b_2,b_1)$, and the regime changes from $i=1$ to $i=2$; (b) when the level of the cash surplus falls in the interval $(\theta_2,d_1]$, and the regime changes from $i=2$ to $i=1$.

\begin{remark}\label{remark4.3}
In this subsection, we have considered the case $\theta_1<\theta_2<d_1<b_2<b_1$. As a matter of fact, it is also possible that $\theta_1<\theta_2<d_1<b_1<b_2$. The ordering between $b_1$ and $b_2$ depends indeed on the relations among the different parameters. This case can be treated similarly to the previous one, and for this reason we only give here the form of $V(x,i)$ and the corresponding verification argument without proof. A numerical example will be then provided in Section 5.

\setlength{\parskip}{0.5em}
Consider the case $\theta_1<\theta_2<d_1<b_1<b_2$ and let $\mu_{2}\geq 0$. The value function $V(x,i)$ is given by
\begin{equation}\label{3wx1-1}
	V(x,1)= \left\{
	\begin{aligned}
	&x-\theta_{1}, & x&\in[\theta_{1},d_1],\\
	&C_3e^{\alpha_3x}+C_4e^{\alpha_4x}+C_5e^{\alpha_5x}+C_6e^{\alpha_6x}, &x&\in(d_1,b_1), \\
	&x+K_1,  &x&\in[b_1, +\infty),
	\end{aligned}
	\right.
	\end{equation}
	and
	\begin{equation}\label{3wx2-2}
	V(x,2)= \left\{
	\begin{aligned}
	&0,  &x&\in[\theta_1,\theta_2], \\
	&C_{1}e^{\alpha_7x}+C_{2}e^{\alpha_8x}+\frac{\lambda_{2}\mu_2}{(\rho+\lambda_{2})^2}+\frac{\lambda_{2}(x-\theta_{1})}{\rho+\lambda_{2}}, &x&\in(\theta_{2},d_1],\\
	& \widehat{C_3}e^{\alpha_3x}+\widehat{C_4}e^{\alpha_4x}+\widehat{C_5}e^{\alpha_5x}+\widehat{C_6}e^{\alpha_6x}, &x&\in(d_1,b_1),\\
	&C_{7}e^{\alpha_7x}+C_{8}e^{\alpha_8x}+\frac{\lambda_{2}\mu_2}{(\rho+\lambda_{2})^2}+\frac{\lambda_{2}(x+K_1)}{\rho+\lambda_{2}}, &x&\in[b_1,b_2),\\
	&x+K_2,  &x&\in[b_2, +\infty),
	\end{aligned}
	\right.
	\end{equation}
if and only if either of the following conditions holds:
\begin{enumerate}[(i)]
 \item  $V'(\theta_{2}+,2)\leq \frac{\lambda_{1}+\rho}{\lambda_{1}}$; 
\item $V'(\theta_{2}+,2)> \frac{\lambda_{1}+\rho}{\lambda_{1}} $ and $\exists \ x_0 \in (\theta_2,d_1)$ such that $V'(x_0,2) = \frac{\lambda_{1}+\rho}{\lambda_{1}}$ and  $H(x_0)\leq 0$ with $H(x):=\mu_1-(\lambda_{1}+\rho)V(x,1)+\lambda_{1}V(x,2)$;
\item $V'(\theta_{2}+,2)> \frac{\lambda_{1}+\rho}{\lambda_{1}} $ and $H'(d_1)\geq 0$.
\end{enumerate}
In (\ref{3wx1-1}) and (\ref{3wx2-2}), $d_1,b_1,b_2, \{ C_i\}_{i=1,...,8}, C_6\neq0, K_1,K_2$ satisfy $V'(\theta_{2}+,2)\geq 1$  and
\begin{align}\label{system31-1}
\begin{aligned}
&V(d_1,1) =V(d_{1}+,1), V'(d_1,1) =V'(d_{1}+,1),\\
&V(b_1,1)=V(b_{1}-,1), V'(b_1,1)=V'(b_{1}-,1), V''(b_1,1)=V''(b_{1}-,1),\\
&V(\theta_{2},2)=V(\theta_{2}+,2),V(d_1,2) =V(d_{1}+,2),V'(d_1,2) =V'(d_{1}+,2),\\
&V(b_1,2)= V(b_{1}-,2), V'(b_1,2)=V'(b_{1}-,2),\\
&V(b_2,2)=V(b_{2}-,2), V'(b_2,2)=V'(b_{2}-,2), V''(b_2,2)=V''(b_{2}-,2).
\end{aligned}
\end{align}
Moreover, the dividend strategy $D^{d_1,b,V}$ introduced in Definition \ref{dividend1} is optimal.
\end{remark}

\subsection{Case (D)}

In this case, similarly to Case (C) we expect that the optimal dividend strategy should also be of \emph{liquidation-barrier-type} as in Definition \ref{dividend1}. However, the fact that $d_1<\theta_{2}$ leads to $\theta_{1}<d_1<\theta_{2}<b_2<b_1$ and hence yields a different structure of the (candidate) value function. As a consequence, arguments in the proof of the optimality of the expected solution need to be changed with respect to Case (C). 

We guess that $w(x,i)$ satisfies (\ref{dividendequation1}) and we first consider $x \in [\theta_{1},d_1]$ and obtain that $  w(x,1) = x-\theta_{1}, w(x,2)=0.$ 

Then, for $x\in(d_1,\theta_2],$ since $w(x,2) =0,$ we have
	\begin{equation}\label{4.19}
	\frac{1}{2}\sigma_1^2w''(x,1) +\mu_1w'(x,1) -(\lambda_{1}+\rho)w(x,1) +\lambda_{1}w(x,2) =0.
	\end{equation}
	Solving the above equation thanks to Lemma \ref{ODE}, we have $w(x,1)=  C_1e^{\alpha_1x}+C_2e^{\alpha_2x},$ where $\alpha_1>0>\alpha_2$ are the real roots of the equation  $-\frac{1}{2}\sigma_1^2\alpha^2 -\mu_1\alpha +(\lambda_{1}+\rho)=0$ and $C_1,C_2$ are constants.
	
	Next we consider $x \in (\theta_2,b_2)$ and we have 
	\begin{equation}\label{4.20}
	\frac{1}{2}\sigma_1^2w''(x,1) +\mu_1w'(x,1) -(\lambda_{1}+\rho)w(x,1) +\lambda_{1}w(x,2) =0
	\end{equation}
	and
	\begin{equation}\label{4.21}
	\frac{1}{2}\sigma_2^2w''(x,2) +\mu_2w'(x,2) -(\lambda_{2}+\rho)w(x,2) +\lambda_{2}w(x,1) =0.
	\end{equation}
	Solving those equations (cf. Lemma \ref{ODE}), we obtain
	\begin{equation*}
	w(x,1)=  C_3e^{\alpha_3x}+C_4e^{\alpha_4x}+C_5e^{\alpha_5x}+C_6e^{\alpha_6x}
	\end{equation*}
	and
	\begin{equation*}
	w(x,2)=  \widehat{C_3}e^{\alpha_3x}+\widehat{C_4}e^{\alpha_4x}+\widehat{C_5}e^{\alpha_5x}+\widehat{C_6}e^{\alpha_6x},
	\end{equation*}
	where  $\alpha_3<\alpha_4<0<\alpha_5<\alpha_6$ are the real roots of $\phi_1(\alpha)\phi_2(\alpha)-\lambda_{1}\lambda_{2}=0$, and $\phi_i=-\frac{1}{2}\sigma_i^2\alpha^2-\mu_i\alpha+(\lambda_{i}+\rho), i=1,2.$ Also, 
\begin{align}\label{relation} \widehat{C_j}=\frac{\phi_1(\alpha_j)}{\lambda_{1}}C_{j}=\frac{\lambda_{2}}{\phi_2(\alpha_j)}C_j, \  j=3,4,5,6.
	\end{align}
	
	 Next, we consider $x \in[b_2,b_1)$. Since $ 1-w'(x,2)=0$, we have $w(x,2)= x+K_1,$ for some $K_1\in \mathbb{R}$. Furthermore,
	\begin{equation}\label{4.23}
	\frac{1}{2}\sigma_1^2w''(x,1) +\mu_1w'(x,1) -(\lambda_{1}+\rho)w(x,1) +\lambda_{1}w(x,2) =0,
	\end{equation}
	which, solved thanks to Lemma \ref{ODE}, yields
	\begin{equation*}
	w(x,1) = C_{7}e^{\alpha_1x}+C_{8}e^{\alpha_2x}+\frac{\lambda_{1}\mu_1}{(\rho+\lambda_{1})^2}+\frac{\lambda_{1}(x+K_1)}{\rho+\lambda_{1}},
	\end{equation*}
		where $\alpha_1>0>\alpha_2$ are the real roots of the equation  $-\frac{1}{2}\sigma_1^2\alpha^2 -\mu_1\alpha +(\lambda_{1}+\rho)=0$ and $C_7,C_8$ are constants.
		
	 Finally, we consider $x \in[b_1,+\infty).$ Then, from $1-w'(x,1)=0$ and $1-w'(x,2)=0,$ we have $w(x,1)= x+K_2, w(x,2)=x+K_1,$ for suitable $K_1,K_2$.
	
Summarizing the above arguments, in the case $  \theta_1<d_1<\theta_2<b_2<b_1$, we find that the candidate value function takes the form
	\begin{equation}\label{4wx1}
	w(x,1)= \left\{
	\begin{aligned}
	&x-\theta_{1}, & x&\in [\theta_{1},d_1],\\
	&C_1e^{\alpha_1x}+C_2e^{\alpha_2x},  &x&\in(d_1,\theta_2], \\
	&C_3e^{\alpha_3x}+C_4e^{\alpha_4x}+C_5e^{\alpha_5x}+C_6e^{\alpha_6x}, &x&\in(\theta_2,b_2), \\
	&C_{7}e^{\alpha_1x}+C_{8}e^{\alpha_2x}+\frac{\lambda_{1}\mu_1}{(\rho+\lambda_{1})^2}+\frac{\lambda_{1}(x+K_1)}{\rho+\lambda_{1}}, & x&\in[b_2,b_1),\\
	&x+K_2,  &x&\in[b_1, +\infty),
	\end{aligned}
	\right.
	\end{equation}
	and
	\begin{equation}\label{4wx2}
	w(x,2)= \left\{
	\begin{aligned}
	&0,  &x&\in[\theta_1,\theta_2], \\
	& \widehat{C_3}e^{\alpha_3x}+\widehat{C_4}e^{\alpha_4x}+\widehat{C_5}e^{\alpha_5x}+\widehat{C_6}e^{\alpha_6x}, &x&\in(\theta_2,b_2),\\
	&x+K_1,  &x&\in[b_2, +\infty).
	\end{aligned}
	\right.
	\end{equation}

Similarly to the analysis of the previous sections, we determine the unknown coefficients $C_i, i=1,...,8,$ and $d_1,b_2,b_1,K_1,K_2$  by imposing the boundary conditions and the smooth fit principle. That is, we assume $w(x,1)$ is $C^1$ in $d_1, \theta_{2}$ and $b_{2}$, $w(x,1)$ is $C^2$ in $b_1$. This leads to the following:
	\begin{align}\label{system41}
          \begin{aligned}
	&w(d_1,1) =w(d_{1}+,1), w'(d_1,1)=w'(d_{1}+,1),\\
	&w(\theta_{2},1) =w(\theta_{2}+,1), w'(\theta_{2},1)=w'(\theta_{2}+,1),\\
	&w(b_2,1) =w(b_{2}-,1), w'(b_2,1)=w'(b_{2}-,1),\\
	&w(b_1,1)=w(b_{1}-,1), w'(b_1,1)=w'(b_{1}-,1), w''(b_1,1)=w''(b_{1}-,1).
	\end{aligned}
            \end{align}
Analogously, we assume that	$w(x,2)$ is $C^0$ in $\theta_{2}$ and $w(x,2)$ is $C^2$ in $b_2$, yielding
	\begin{align}
\begin{aligned}\label{system42}
	&w(\theta_{2},2)=w(\theta_{2}+,2),w(b_2,2)=w(b_{2}-,2),\\& w'(b_2,2)=w'(b_{2}-,2), w''(b_2,2)=w''(b_{2}-,2).
\end{aligned}
	\end{align}

As for Case (C), the complexity of the 13 equations arising from (\ref{system41})-(\ref{system42}) is unfortunately such that we can neither provide existence nor uniqueness of the solution. However, a detailed numerical example is provided in Section 5. The next theorem verifies that the $w(x,i)$ in (\ref{4wx1}) and (\ref{4wx2}) is indeed the value function under a set of sufficient conditions.

\begin{theorem}\label{theorem4.5}
Suppose that $\mu_2\geq 0$ {and assume that there exist $d_1, b_i, K_i, i=1,2, C_j, j=1,...,8, C_6\neq0$ solving} the system of equations (\ref{system41}) and (\ref{system42}) {and being} such that $w'(\theta_{2}+,2)\geq 0$ and $\theta_1<d_1<\theta_2<b_2<b_1$. Let $\widehat{C_i}, i=3,...,6,$ be defined by (\ref{relation}).   Then the function $w(x,i), i=1,2,$ given by (\ref{4wx1}) and (\ref{4wx2}) coincides with the value function $V(x,i)$. 

Moreover, the dividend strategy $D^{d_1,b,w}$ introduced in Definition \ref{dividend1} is optimal.
\end{theorem}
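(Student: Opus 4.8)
The plan is to verify that the candidate $w$ defined in (\ref{4wx1})--(\ref{4wx2}) meets all the hypotheses of the verification theorem, Theorem \ref{verificationtheorem}, and then to invoke it directly. By construction $w(\cdot,i)$ is continuous, obeys the boundary condition $w(x,i)=0$ for $x\le\theta_i$, and solves the relevant ODEs on each sub-interval; the smooth-fit system (\ref{system41})--(\ref{system42}) guarantees $w(\cdot,i)\in C^1((\theta_i,\infty))$, with $w(\cdot,i)\in C^2$ off the finite sets $N_1=\{d_1,\theta_2,b_2,b_1\}$ and $N_2=\{b_2\}$. Hence what remains is to show that $w$ is a \emph{nondecreasing} almost-everywhere solution of the HJB equation (\ref{HJB}), namely (a) $w'(x,i)\ge 1$ for all $x>\theta_i$, and (b) $(\mathcal{L}-\rho)w(x,i)\le 0$ for a.e.\ $x>\theta_i$, and finally that the regions $\mathcal{C}(i),\mathcal{O}(i)$ are precisely those prescribed in Definition \ref{dividend1}. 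Once these hold, the first part of Theorem \ref{verificationtheorem} gives $w\ge V$, while the equality case (via Lemma \ref{lemma3.1} together with property (ii) of Definition \ref{dividend1}) yields $w=J(\cdot;D^{d_1,b,w})\le V$, whence $w=V$ and $D^{d_1,b,w}$ is optimal.

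The heart of the matter is the gradient bound (a), which I would establish through the weak maximum principle applied to the coupled linear system solved by $w$ on the continuation regions, as announced in the Introduction. In Regime 2 the continuation region is the single interval $(\theta_2,b_2)$: differentiating the coupled ODE (\ref{4.20})--(\ref{4.21}) and using the $C^2$ fit $w'(b_2-,2)=1$, $w''(b_2-,2)=0$, I would show that $w(\cdot,2)$ is concave there, so that $w'(\cdot,2)$ is nonincreasing and thus bounded below by its value $1$ at $b_2$; the standing sign condition $w'(\theta_2+,2)\ge 0$ enters to control the boundary contribution at the kink $\theta_2$, where $w(\cdot,2)$ is only $C^0$, and to secure $w(\cdot,2)\ge 0$. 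Regime 1 is more delicate: its continuation region $(d_1,b_1)$ straddles $\theta_2$ and $b_2$, across which the coupling term $\lambda_1 w(\cdot,2)$ changes analytic form (from $0$, to the coupled solution, to the affine $x+K_1$), and the smooth-fit conditions force $w'(d_1,1)=w'(b_1-,1)=1$ at \emph{both} endpoints, so $w'(\cdot,1)$ cannot be monotone and a pure concavity argument fails. Instead I would show that $w(\cdot,1)$ is convex near the liquidation boundary $d_1$ and concave near the reflection barrier $b_1$ --- the band structure --- by signing $w''(\cdot,1)$ through the maximum principle on the relevant sub-intervals, and conclude $w'(\cdot,1)\ge 1$ on all of $(d_1,b_1)$.

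For the supermartingale-type inequality (b) I would argue region by region. On the liquidation interval $(\theta_1,d_1]$ one has $w(x,1)=x-\theta_1$ and $w(x,2)=0$, whence $(\mathcal{L}-\rho)w(x,1)=\mu_1-(\lambda_1+\rho)(x-\theta_1)<0$ for every $x>\theta_1$, precisely because $\mu_1\le 0$; this is the simplification, special to Case (D) with $d_1<\theta_2$, that removes the auxiliary conditions on the function $H$ needed in Theorem \ref{theorem4.3}. On the outermost barrier region $[b_1,\infty)$ both $w(\cdot,1)$ and $w(\cdot,2)$ are affine, so $(\mathcal{L}-\rho)w(x,1)$ is affine with slope $-\rho<0$, vanishes at $b_1$ by the $C^2$ fit, and is therefore nonpositive throughout. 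On $[b_2,\infty)$ the same computation for Regime 2 reduces, via the explicit expressions in (\ref{4wx1}), to signing an exponential-plus-affine quantity that again vanishes at $b_2$; there I would combine the $C^2$ fit with the gradient bound from (a) to obtain nonpositivity. The continuation regions contribute equality by construction, and assembling these facts identifies $\mathcal{C}(i)$ and $\mathcal{O}(i)$ with the sets of Definition \ref{dividend1}.

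I expect the main obstacle to be the Regime-1 gradient bound on $(d_1,b_1)$: since $w'(\cdot,1)$ equals $1$ at both endpoints and the coupling with $w(\cdot,2)$ is only piecewise smooth, one cannot sign $w''(\cdot,1)$ by a single global comparison but must patch the convex and concave branches together while ruling out any interior crossing of the constraint $w'=1$ from above. Carrying the maximum-principle comparison across the interface points $\theta_2$, where $w(\cdot,2)$ is merely $C^0$, and $b_2$, where the coupling switches to the affine branch, is the technically demanding step, and it is there that the sign condition $w'(\theta_2+,2)\ge 0$ and the non-degeneracy assumption $C_6\ne 0$ will be used.
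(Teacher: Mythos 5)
Your proposal follows essentially the same route as the paper's proof: verification via Theorem \ref{verificationtheorem}, concavity of $w(\cdot,2)$ on $(\theta_2,b_2)$ obtained from a maximum-principle/It\^o argument after checking $w''(\theta_2+,2)\le 0$, and a piecewise signing of $w''(\cdot,1)$ (convex near $d_1$ and on $(d_1,\theta_2]$, concave near $b_1$, patched across the finitely many zeros of $w''(\cdot,1)$ in $(\theta_2,b_2)$ via the weak maximum principle). The only ingredient you do not name explicitly is the paper's preliminary Step~1 --- that $w(\cdot,1)$ is nondecreasing on $(\theta_1,\theta_2)$, hence $w(\theta_2+,1)>0$ --- which is precisely what, together with $\mu_2\ge 0$ and $w'(\theta_2+,2)\ge 0$, forces $w''(\theta_2+,2)\le 0$ from the coupled ODE and launches the concavity argument.
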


\begin{proof}
The proof is given in Appendix \ref{ptheorem4.5}.
\end{proof}

Similarly to Case (C), we notice that dividend payments can also occur just because of a change in regime:  those payments happen when the level of the cash surplus falls in the interval $(b_2,b_1)$, and the regime changes from $i=1$ to $i=2$.

\begin{remark}
Theoretically, it is also possible that $b_1<b_2$ leading to $\theta_1<d_1<\theta_2<b_1<b_2$.  This case can be treated similarly to the previous one. Below we only give the form of $V(x,i)$ and the corresponding verification argument without proof. 

\setlength{\parskip}{0.5em}
Consider the case $  \theta_1<d_1<\theta_2<b_1<b_2$ and let $\mu_{2}\geq0$. The value function $V(x,i)$ is given by
	\begin{equation}\label{4wx1-1}
	V(x,1)= \left\{
	\begin{aligned}
	&x-\theta_{1}, & x&\in[\theta_{1},d_1],\\
	&C_1e^{\alpha_1x}+C_2e^{\alpha_2x},  &x&\in (d_1,\theta_2], \\
	&C_3e^{\alpha_3x}+C_4e^{\alpha_4x}+C_5e^{\alpha_5x}+C_6e^{\alpha_6x}, &x&\in (\theta_2,b_1), \\
	&x+K_1,  &x&\in[b_1, +\infty),
	\end{aligned}
	\right.
	\end{equation}
	and
	\begin{equation}\label{4wx2-2}
	V(x,2)= \left\{
	\begin{aligned}
	&0,  &x&\in[\theta_1,\theta_2], \\
	& \widehat{C_3}e^{\alpha_3x}+\widehat{C_4}e^{\alpha_4x}+\widehat{C_5}e^{\alpha_5x}+\widehat{C_6}e^{\alpha_6x}, &x&\in(\theta_2,b_1),\\
&C_{7}e^{\alpha_7x}+C_{8}e^{\alpha_8x}+\frac{\lambda_{2}\mu_2}{(\rho+\lambda_{2})^2}+\frac{\lambda_{2}(x+K_1)}{\rho+\lambda_{2}}, & x&\in[b_1,b_2),\\
	&x+K_2,  &x&\in[b_2, +\infty),
	\end{aligned}
	\right.
	\end{equation}	
with $d_1, b_i, K_i, i=1,2, C_j, j=1,...,8, C_6\neq0$ satisfying $V'(\theta_{2}+,2)\geq 0$ and
	\begin{align}\label{system41-1}
          \begin{aligned}
	&V(d_1,1) =V(d_{1}+,1), V'(d_1,1)=V'(d_{1}+,1),\\
	&V(\theta_{2},1) =V(\theta_{2}+,1), V'(\theta_{2},1)=V'(\theta_{2}+,1),\\
	&V(b_1,1)=V(b_{1}-,1), V'(b_1,1)=V'(b_{1}-,1), V''(b_1,1)=V''(b_{1}-,1),\\
	&V(\theta_{2},2)=V(\theta_{2}+,2),V(b_1,2) =V(b_{1-},2), V'(b_1,2)=V'(b_{1}-,2),\\
	&V(b_2,2)=V(b_{2}-,2), V'(b_2,2)=V'(b_{2}-,2), V''(b_2,2)=V''(b_{2}-,2).
\end{aligned}
	\end{align}
	 Moreover, the dividend strategy $D^{d_1,b,V}$ introduced in Definition \ref{dividend1} is optimal.

However, it is also worth stressing that our numerical study of Section 5 does not identify a solution to (\ref{system41-1}) such that $\theta_{1}<d_1<\theta_{2}<b_1<b_2$. 
\end{remark}

\begin{remark}
{The results of Theorems 4.3 and 4.4 identify a set of numerically easily verifiable conditions ensuring the optimality of a liquidation-barrier strategy triggered by the boundaries $d_1,b_1$ and $b_2$. However, since we are not able to prove existence and uniqueness of a solution to systems (4.26)-(4.27) and (4.30), we cannot give a definitive answer about the optimality of $D^{d_1,b, w}$ or about the existence of other optimal policies with different structure. On the other hand, the subsequent numerical study performed in Section 5 confirms the optimality of a liquidation-barrier strategy in Cases (C) and (D), as well as the fact that conditions in Theorems 4.1-4.4 are indeed disjoint as one moves from a set of requirements to the next one by increasing $\mu_2$. It is also worth noticing that our numerical analysis seems to be robust w.r.t. different choices of the model's parameters.} 
\end{remark}

\section{A numerical study}

In this section, we provide numerical illustrations of the optimal dividend strategy and of the value function in each of the cases discussed in Section 4. Moreover, we investigate the sensitivities of the optimal threshold levels on relevant parameters and analyze the consequent  economic meaning. We numerically solve the differential equations satisfied by the value functions in each case, and the numerics was done using Mathematica 12.0.

With regards to the discussion on Case (A)-Case (D) at the beginning of Section 4, we consider a company whose cash surplus follows a regime-switching model with fixed parameters as in Table \ref{tab1}, and we vary $\mu_{2}\in \mathbb{R}.$
\begin{table}[htbp]
\caption{The parameter set for varying $\mu_{2}$}
\label{tab1}
\centering
	\begin{tabular}{cccccc}
		\toprule  
		 $i$	& $\mu_i$ &$\sigma_i$ & $\lambda_{i}$   &$\theta_{i}$ &
		  $\rho$ \\
		  \midrule
	1	&-0.8 &0.5 &10& -0.2 &0.5\\
	2 & &0.5   &1 & 0.2&0.5\\
		\bottomrule
	\end{tabular}
\end{table}
It is worth noting that we choose $\lambda_{1}>\lambda_{2}$ in such a way that for $\mu_2$ taking values in $(-\infty, 2]$ we can observe all the cases treated in Section 4.

\subsection{\boldmath $ \mu_{2}<0$ and $|\mu_{2}|$ is sufficiently large}
According to Theorem \ref{theorem4.1} in Case (A), we find that when $\mu_{2}\leq (\theta_1-\theta_2)\lambda_2=-0.4$, the optimal dividend strategy is to liquidate the company at time $0$ in each regime. It is not surprising because the drifts are negative in both regimes. Moreover, from Figure \ref{casea}, we observe that $V(x,1)=x-\theta_1>V(x,2)=x-\theta_2$, as clearly expected since $\theta_1<\theta_2$.  Increasing $\mu_{2}$, the optimal strategy turns to that of Case (B), see Table \ref{tab2}.

\begin{figure}[htbp]
	\centering
		\includegraphics[width=3in]{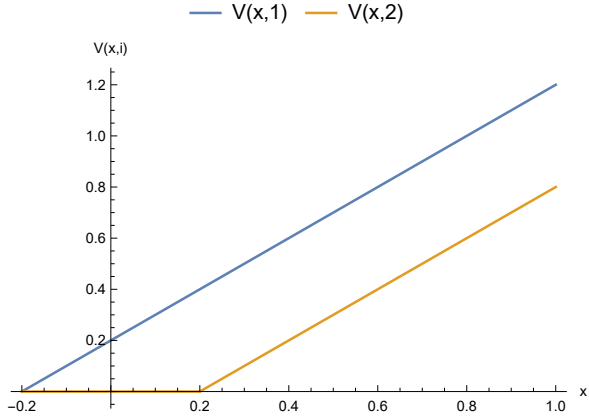}
		\caption{The value function in Case (A)}
		\label{casea}
\end{figure}

\subsection{\boldmath$|\mu_{2}|$ is sufficiently small}

 We consider $\mu_{2}\in [-0.39, 0.68]$, since Figure \ref{gx} shows that $G(x_0)>0$ {---where $x_0$ is the maximiser of $G(x)$---}when $\mu_{2}=0.69$, which then does not satisfy the conditions in Theorem \ref{theorem4.2}. Solving (\ref{system1}) numerically, we obtain {$b_2>\theta_2=0.2$}, whose values are shown in Table \ref{tab2}, where $Q=[(\theta_{2}-\theta_{1})[(\lambda_{2}+\rho)(\lambda_{1}+\rho)-\lambda_{1}\lambda_{2}]-(\lambda_{2}+\rho)\mu_1]/\lambda_{1}=0.35$ as defined in Remark \ref{remark4.1}. Due to $\mu_1<0$ and $|\mu_2|$ is sufficiently small, regime 1 is very unprofitable for the company. Indeed, although the business condition may jump to regime 2 in the future, $|\mu_2|$ is however not  large enough to compensate such a waiting. Also, the bankruptcy level $\theta_2$ is larger than $\theta_1$, then leading to a possible lower value of liquidation. Thus the company chooses to pay all the surplus as dividends and then immediately go bankrupt in regime 1. However, the business conditions are not too adverse for the company in regime 2. Hence the possibility to jump to regime 1 in the future, and then to liquidate with reference to a lower bankruptcy level $\theta_1<\theta_2$, leads to a compensation effect that induces the company to continue the business and pay dividends according to a barrier strategy in regime 2.  Figure \ref{valuefunctionb} shows $V(x,1)$ and $V(x,2)$ for different choices of $\mu_2$.

\begin{figure}[htbp]
	\centering
	\begin{minipage}[b]{0.5\linewidth}
		\includegraphics[width=3in]{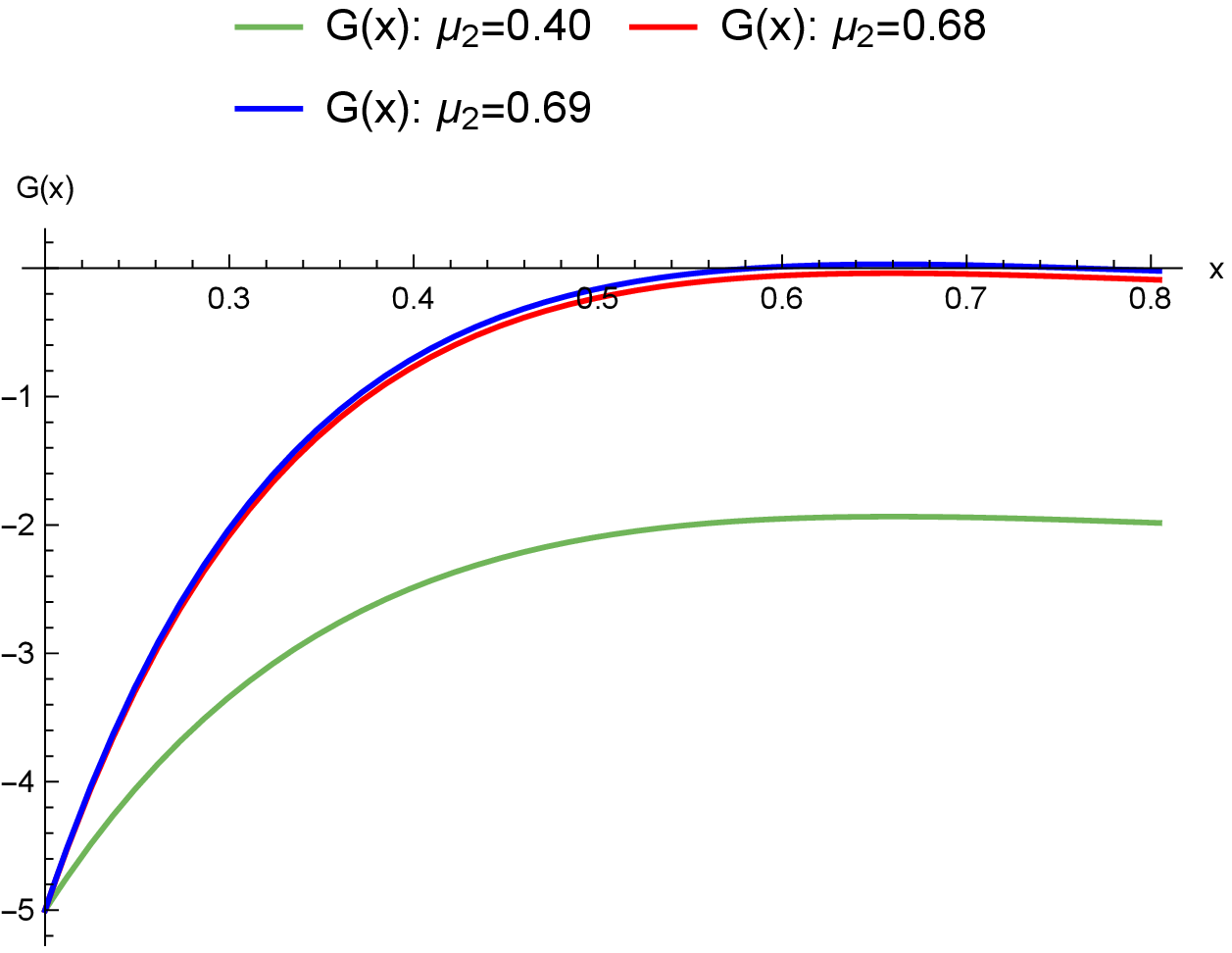}
		\caption{The function $G(x)$ of Theorem \ref{theorem4.2}}
		\label{gx}
	\end{minipage}%
	\begin{minipage}[b]{0.5\linewidth}
		\includegraphics[width=3in]{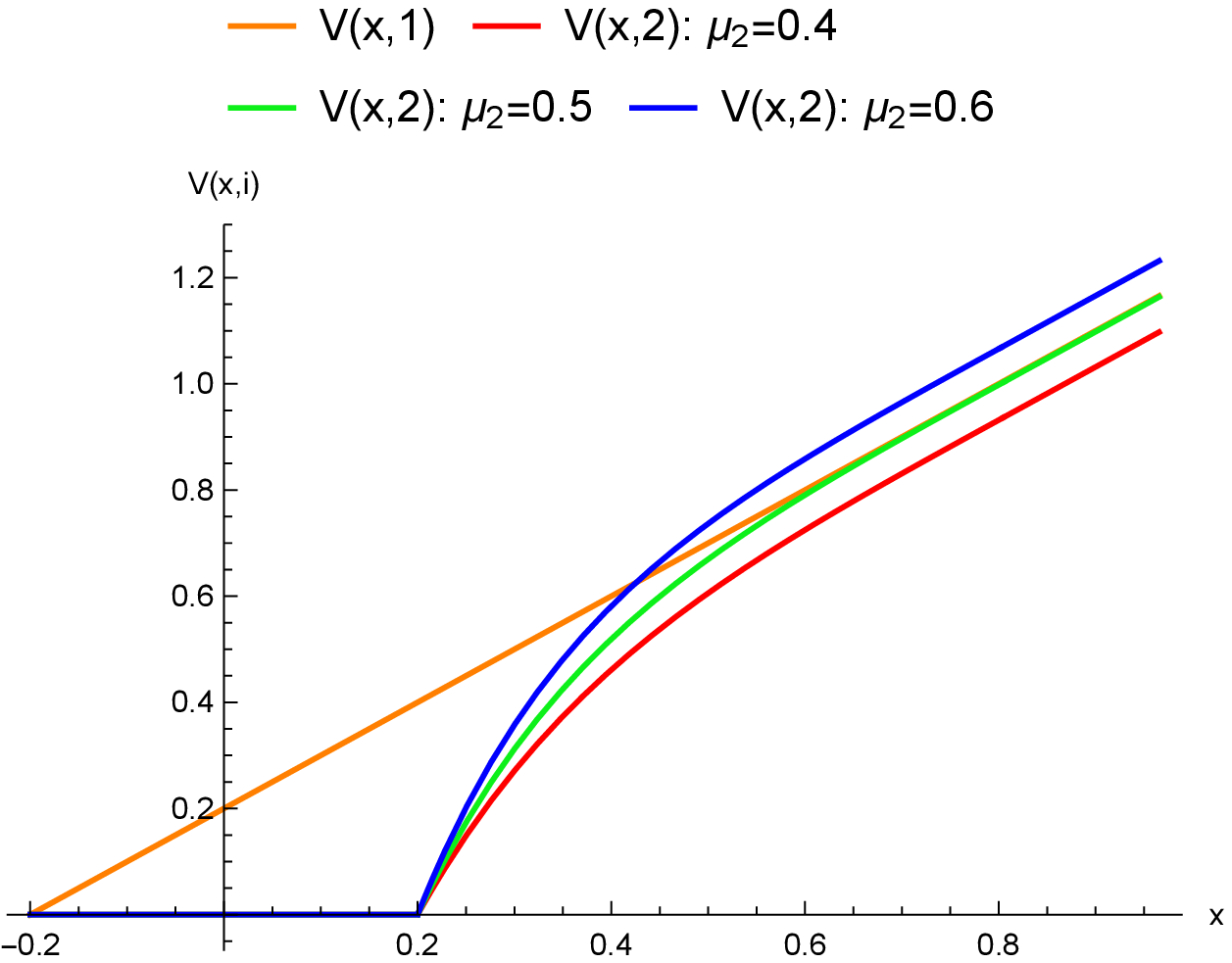}
		\caption{The value function for different values of $\mu_{2}$}
		\label{valuefunctionb}
	\end{minipage}
\end{figure}

Furthermore, from Table \ref{tab2}, we note that $b_2$ is increasing when $\mu_{2}$ increases in the interval $[-0.39,0.5]$ and then decreases when $\mu_{2}$ continues to increase. This is consistent with what it is observed in the no-regime-switching case (see, e.g., Section 2.4 in \cite{moreno2014market}). When $\mu_{2}$ is relatively large fewer precautionary reserves are needed. However, when the business conditions are relatively poor, the barrier $b_2$ must be decreased so to increase the probability of hitting it, even if this leads to a large bankruptcy risk.

\begin{table}[htbp]
	\caption{The optimal barrier levels for different $\mu_{2}$ in Case (B)}
	\label{tab2}
	\centering
	\begin{tabular}{cccccccccccc}
		\toprule  
		$\mu_2$	& -0.39  & -0.3 & -0.2 & 0  &0.2 & Q=0.35&0.4& 0.5 &0.6&0.64&0.68  \\
		\bottomrule
		$b_2$ & 0.220 &0.388  &0.532 &  0.703&0.780& 0.802       &0.805& 0.806 &0.802&0.800&0.797\\
		\bottomrule
	\end{tabular}
\end{table}

Next, we choose the parameters as in Table \ref{tab1} and vary $\sigma_2, \lambda_{2}, \theta_{2}, \rho$ individually, while keeping the other parameters fixed. The results are shown in Figures \ref{sigma2}-\ref{rho}. From Figure \ref{sigma2}, we note that $b_2$ is increasing when $\sigma_2$ increases regardless of the value of $\mu_{2}$. The result implies that when the volatility of a regime is overwhelmingly large, it is better to maintain a high level of reserve in the more volatile regime to reduce the risk of early ruin and to adopt a much lower reserve level to achieve early dividend yields in the less volatile regime.
\begin{figure}[htbp]
		\setlength{\abovecaptionskip}{0pt}
	\setlength{\belowcaptionskip}{5pt}
	\begin{minipage}[b]{0.5\linewidth}
		\centering
		\includegraphics[width=2.7in]{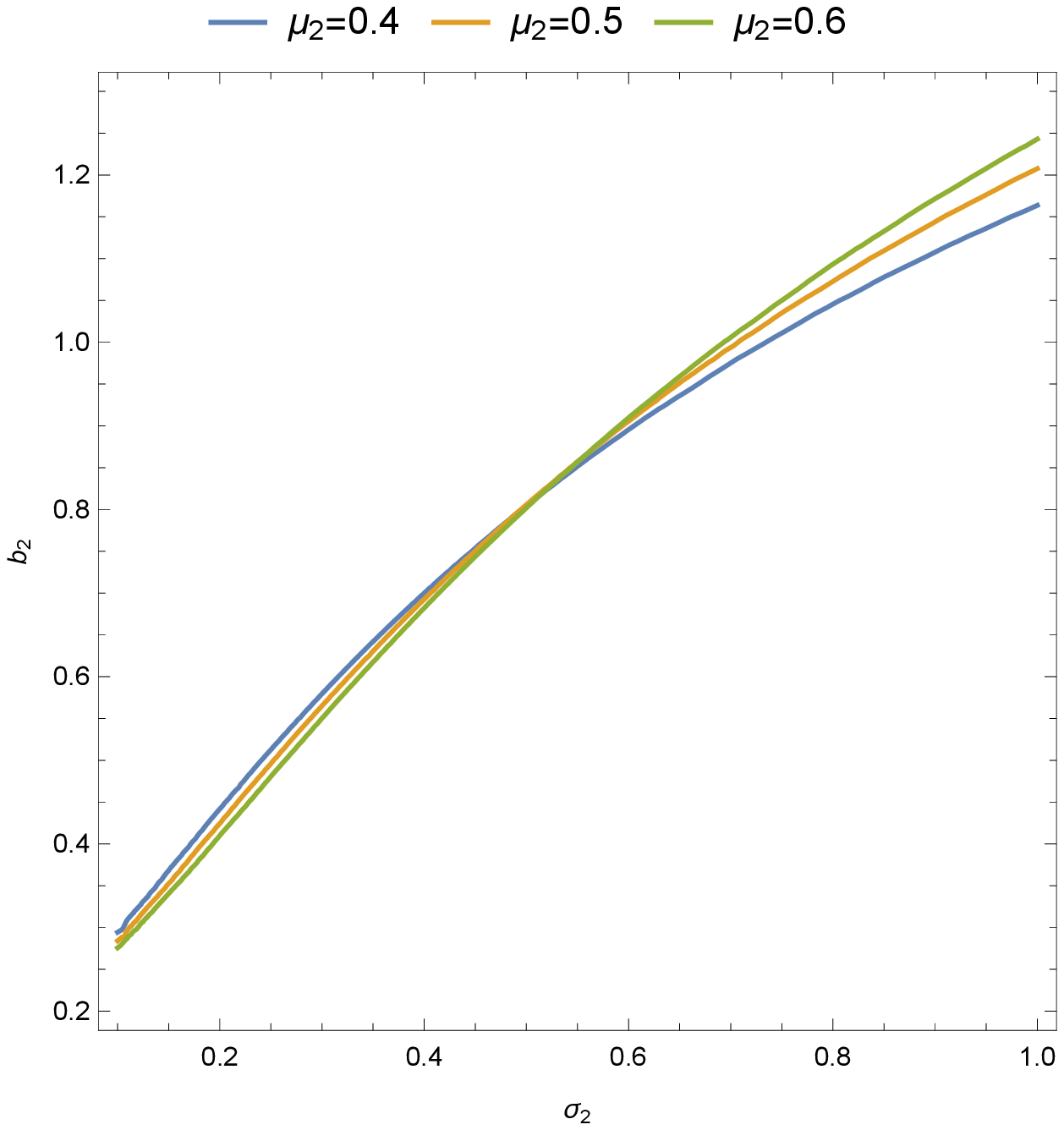}
		\caption{Values of $b_2$ when varying $\sigma_2$}
		\label{sigma2}
	\end{minipage}%
	\begin{minipage}[b]{0.5\linewidth}
		\centering
		\includegraphics[width=2.7in]{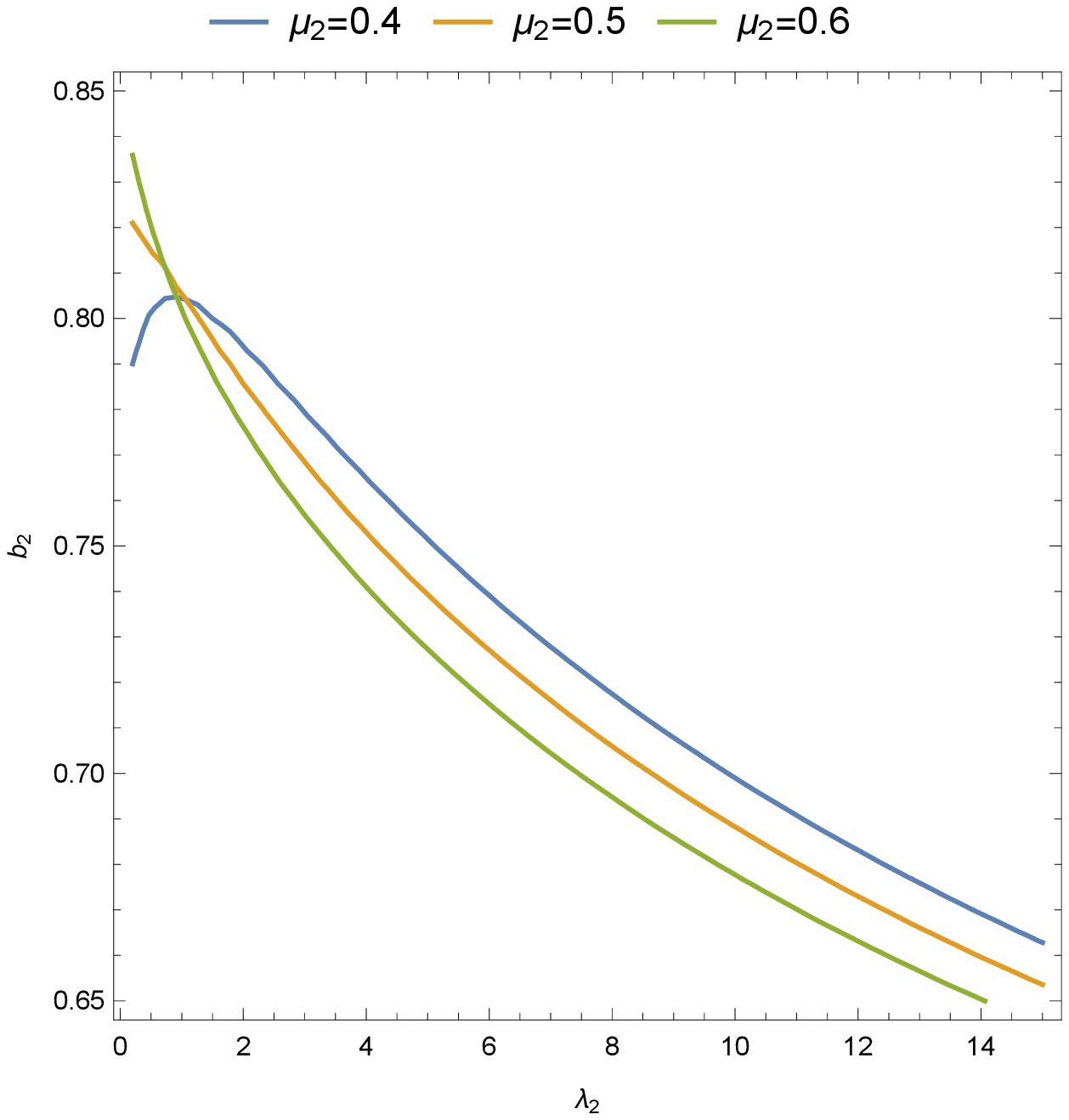}
		\caption{Values of $b_2$ when varying $\lambda_2$}
		\label{lambda2}
	\end{minipage}
\end{figure}

From Figure \ref{lambda2}, we observe that different values of $\mu_2$ induce different behaviors of $b_2$ as the Markov transition rate $\lambda_{2}$ increases. Specifically, $b_2$ is decreasing when $\lambda_{2}$ increases, if $\mu_{2}$ takes a relatively large value. This can be explained by observing that the stationary probability of the Markov chain being in state 2 is $\lambda_{1}/(\lambda_{1}+\lambda_{2})$. Therefore, when $\lambda_{2}$ is increased, the Markov chain is expected to stay in regime 2 for a shorter time and the company's risk of entering regime 1 (an undesirable regime with respect to regime 2 with relatively large $\mu_{2}$) is getting higher. Hence, the company should maximize the probability of hitting the dividend barrier by lowering the latter.

\begin{figure}[htbp]
	\setlength{\abovecaptionskip}{0pt}
	\setlength{\belowcaptionskip}{5pt}
\begin{minipage}[b]{0.5\linewidth}
	\centering
	\includegraphics[width=2.7in]{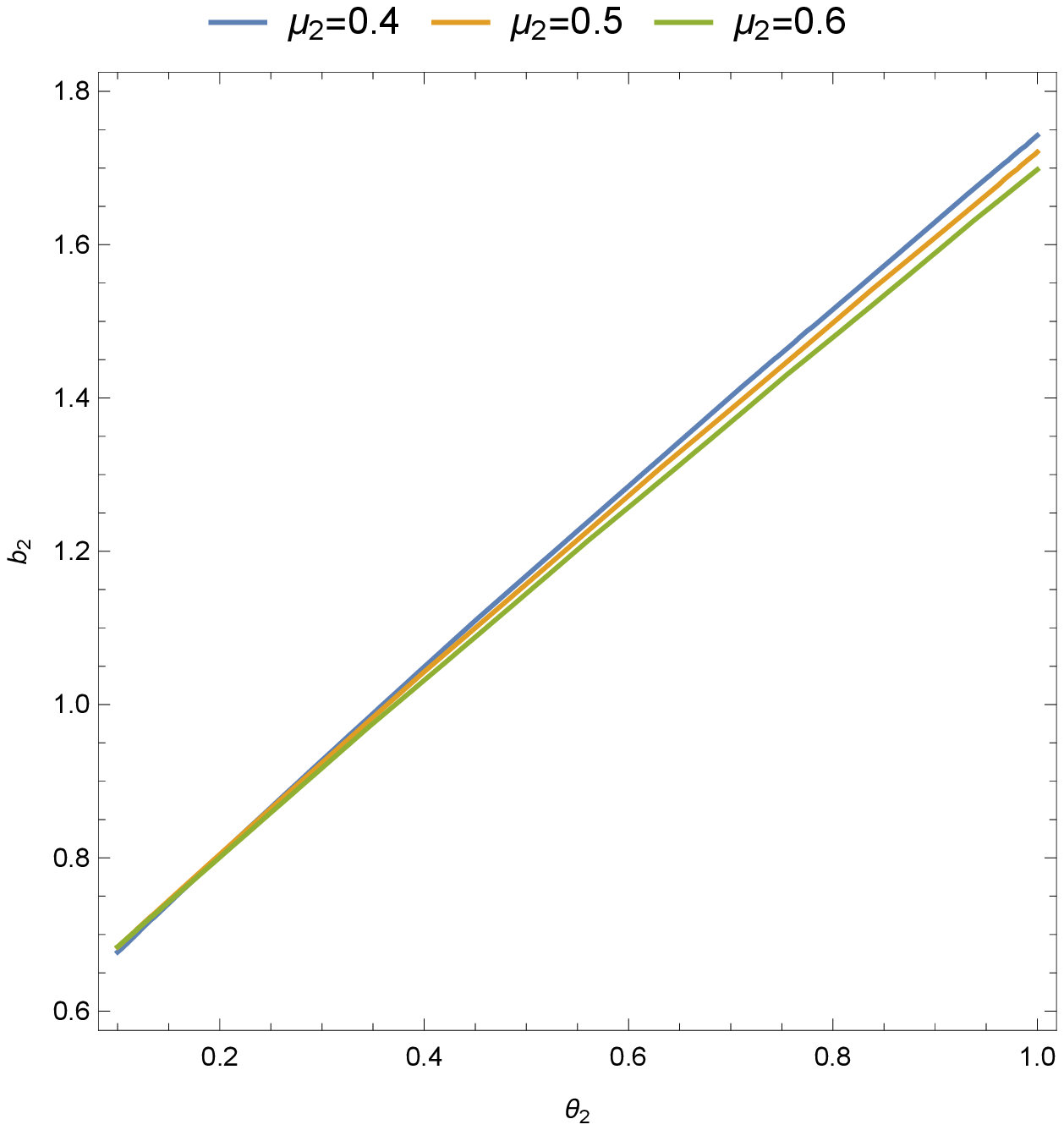}
	\caption{Values of $b_2$ when varying $\theta_2$}
	\label{theta2}
\end{minipage}%
\begin{minipage}[b]{0.5\linewidth}
	\centering
	\includegraphics[width=2.7in]{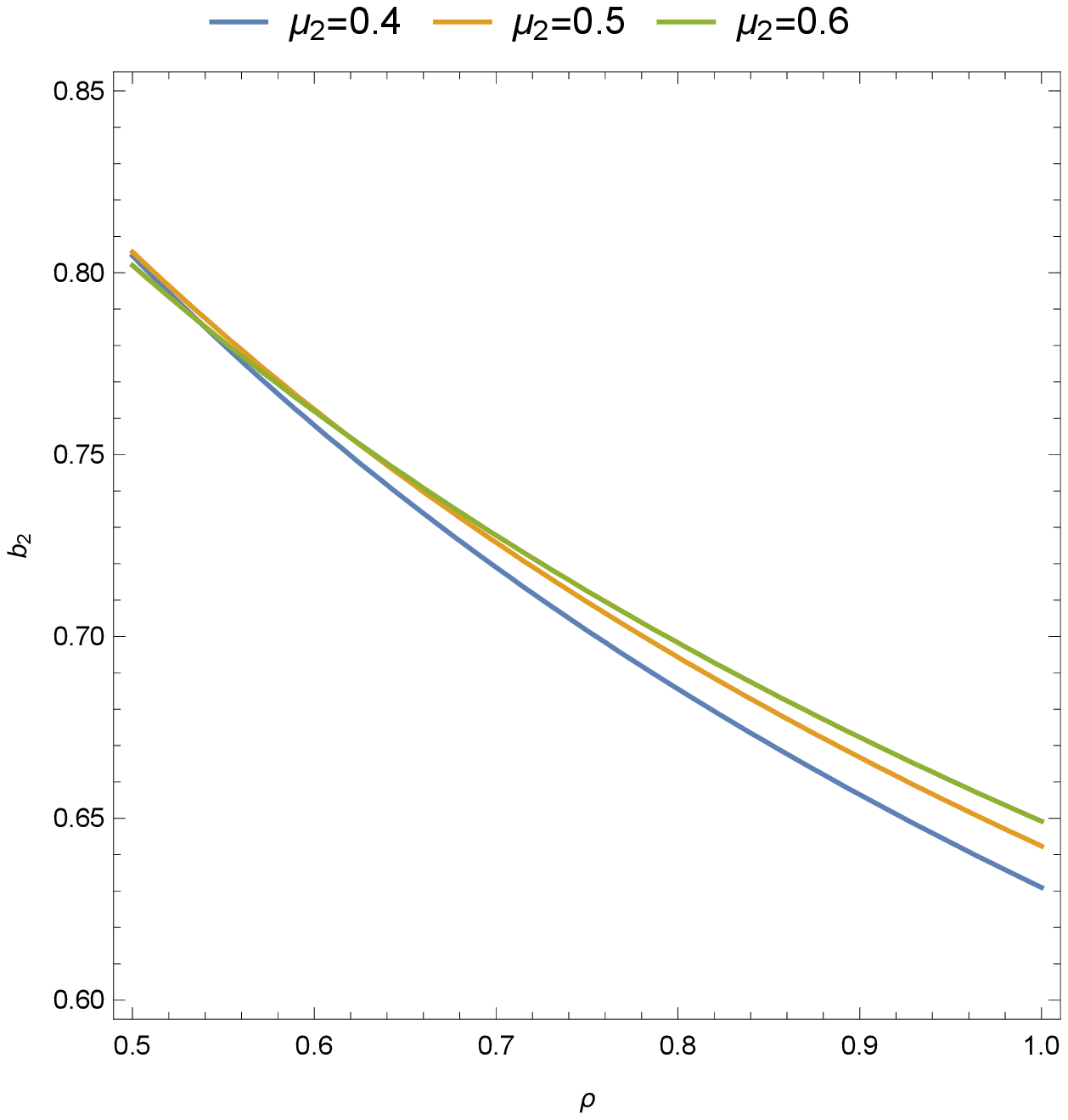}
	\caption{Values of $b_2$ when varying $\rho$}
	\label{rho}
\end{minipage}
\end{figure}

However, when $\mu_{2}$ takes a relatively small value, $b_2$ is a single-picked function of $\lambda_{2}$. Decreasing $\lambda_{2}$ the (stationary) probability of the business condition to be in regime 2 increases. Then, in order to maximize the profits resulting from a barrier strategy, the optimal dividend threshold $b_2$ should be decreased (although this might increase the bankruptcy risk) in order to compensate the relatively small trend of profitability in state 2. On the other hand, increasing $\lambda_{2}$ yields a larger (stationary) probability of being in state 1, which is characterized by a lower bankruptcy level. In this case the company decreases the level of the dividend barrier $b_2$ in order to maximize the dividends' payments accrued according to a barrier strategy, before jumping to the more probable state 1 where a liquidation is optimal.

From Figure \ref{theta2}, it is clear that $b_2$ is increasing when $\theta_{2}$ increases regardless of the value of $\mu_{2}$. This suggests that, when $\theta_{2}$ increases, the company should set a higher dividend barrier $b_2$ so to increase the cash reserves and minimize the bankruptcy risk.

From Figure \ref{rho}, we see that $b_2$ is decreasing when the rate of discounting $\rho$ increases, regardless of the value of $\mu_{2}$. If the rate of discounting is higher, which means a higher degree of impatience of the company, it is then optimal to anticipate dividend payments by lowering the dividend barrier $b_2$.

\subsection{\boldmath$\mu_{2}> 0$ is sufficiently large} 
We now consider $\mu_{2} \in [0.69,1.09],$ which leads to $\theta_{2}<d_1$ as in Case (C). Numerically solving (\ref{system31-1}) and (\ref{system31}), (\ref{system32}), respectively, the results for the optimal liquidation-barrier levels are shown in Table \ref{tab3}, where the left chart shows the case $\theta_{1}<\theta_{2}<d_1<b_1<b_2$ discussed in Remark 4.3, while the right one corresponds to the case $\theta_{1}<\theta_{2}<d_1<b_2<b_1$ of Theorem 4.3.  Also here we find that the conditions {(i)-(iii)} in Theorem 4.3 and Remark 4.3 are satisfied, see Figures \ref{hx1}-\ref{hx2}. 

\begin{table}[htpb]
	\caption{The optimal liquidation-barrier levels for different $\mu_{2}$ in Case (C)}
	\label{tab3}
	\centering
	\begin{minipage}{0.28\textwidth}
		\centering
		\begin{tabular}{ccc}
			\toprule  
			$\mu_2$	& 0.69  & 0.70  \\
			\bottomrule
			
			$d_1$	&0.537 &0.469\\
			$b_1$ &0.741 &0.799  \\
			$b_2$&0.797 &0.800  \\
			\bottomrule
		\end{tabular}
	\end{minipage}
	\begin{minipage}{0.68\textwidth}  
		\centering
		\begin{tabular}{cccccccc}
			\toprule  
			$\mu_2$	& 0.71  & 0.74 & 0.80 & 0.90  & 1.0&1.06&1.09\\
			\bottomrule
			
			$d_1$ & 0.431 &0.366 &0.299 & 0.245&0.216& 0.205       &0.20002\\
			$b_2$ & 0.804 &0.814  &0.830 & 0.845&0.853& 0.854       &0.855\\
			$b_1$&0.834 &0.897  &0.963 &1.022&1.059&1.075&1.082\\
			\bottomrule
		\end{tabular}
	\end{minipage}
	\centering
\end{table}

 From Table \ref{tab3}, we note that $d_1$ is decreasing when $\mu_{2}$ increases. This suggests that when $\mu_{2}$ is increasing, it is optimal to set a lower liquidation level $d_1$ and thus postpone the liquidation in regime 1, since the company might hope to jump to the very profitable regime 2.  On the other hand, we also find that $b_1$ is increasing when $\mu_{2}$ increases. This implies that it is optimal to maintain a high level of the reserve by setting a larger dividend barrier $b_1$  in order to reduce the chance of liquidating in regime 1. Meanwhile, $b_2$ is increasing as $\mu_{2}$ increases. Although $\mu_{2}>0$ is sufficiently large, there is still a risk of jumping to regime 1. In particular, when the cash surplus $X_t$ in regime 2 belongs to $(\theta_{2},d_1]$, jumping to regime 1 will lead to an immediate liquidation. In order to postpone the liquidation, the company retains enough precautionary reserves by increasing dividend barrier $b_2$. Figures \ref{valuefunctionc1}-\ref{valuefunctionc2} illustrate the value function for both regimes when $\mu_{2}>0$ is sufficiently large. 
\begin{figure}[htbp]
	\centering
	\begin{minipage}[b]{0.5\linewidth}
		\includegraphics[width=3in]{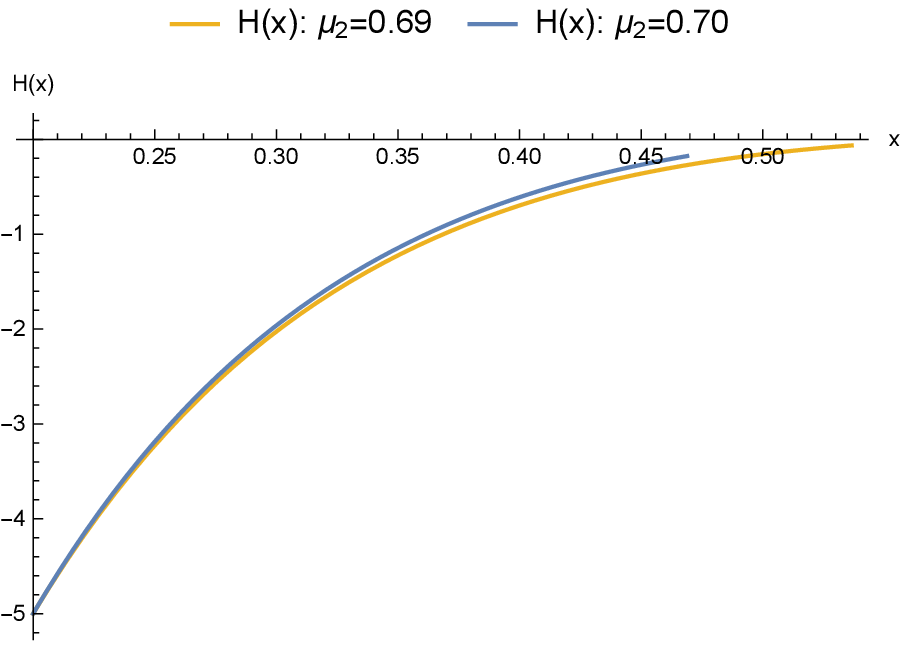}
		\caption{The function $H(x)$ of Remark 4.3}
		\label{hx1}
	\end{minipage}%
	\begin{minipage}[b]{0.5\linewidth}
		\includegraphics[width=3in]{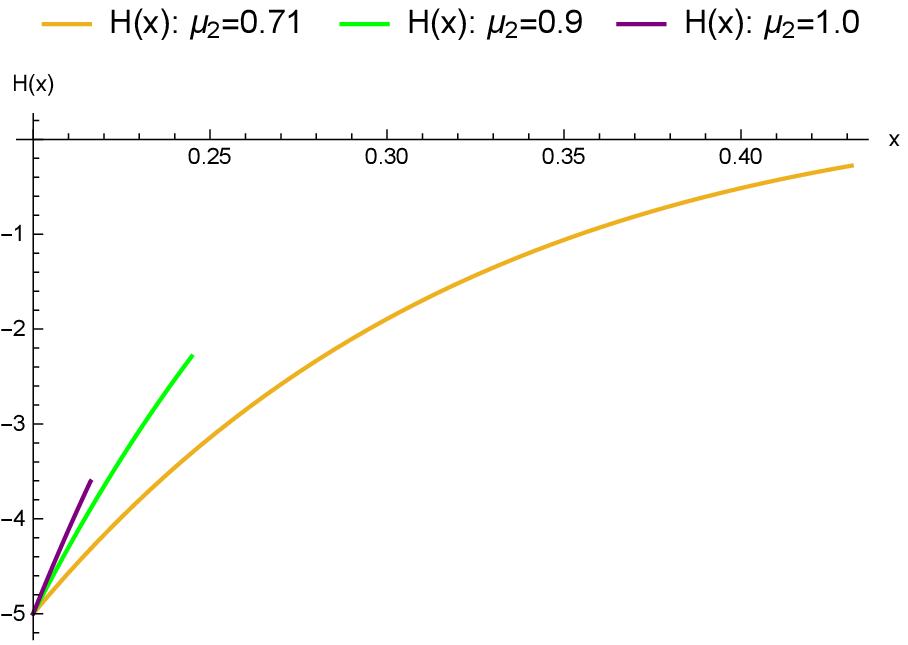}
		\caption{The function $H(x)$ of Theorem \ref{theorem4.3}}
		\label{hx2}
	\end{minipage}
\end{figure}

\begin{figure}[htbp]
	\begin{minipage}[b]{0.5\linewidth}
		\centering
		\includegraphics[width=3in]{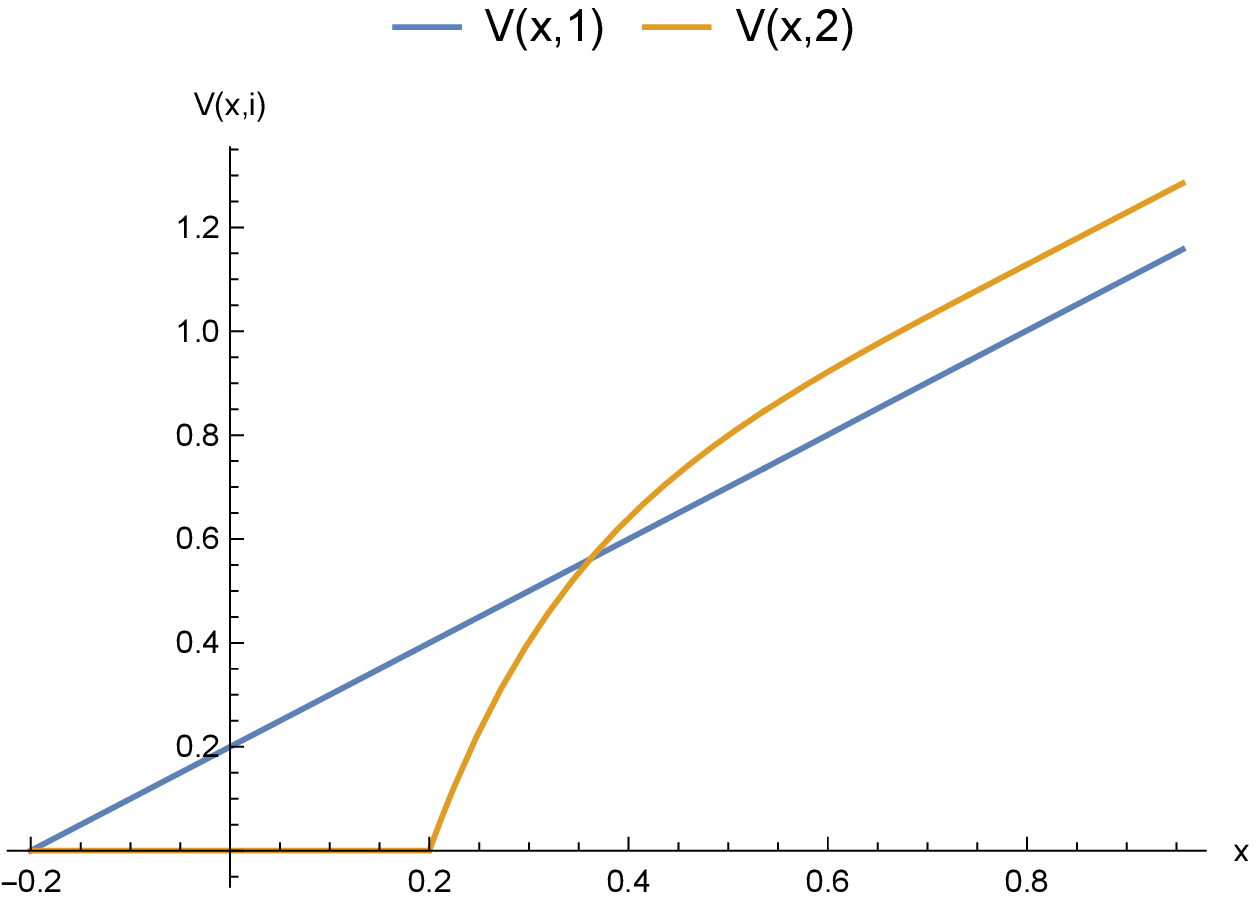}
		\caption{The value function for $\mu_{2}=0.69$}
		\label{valuefunctionc1}
	\end{minipage}%
	\begin{minipage}[b]{0.5\linewidth}
		\centering
		\includegraphics[width=3in]{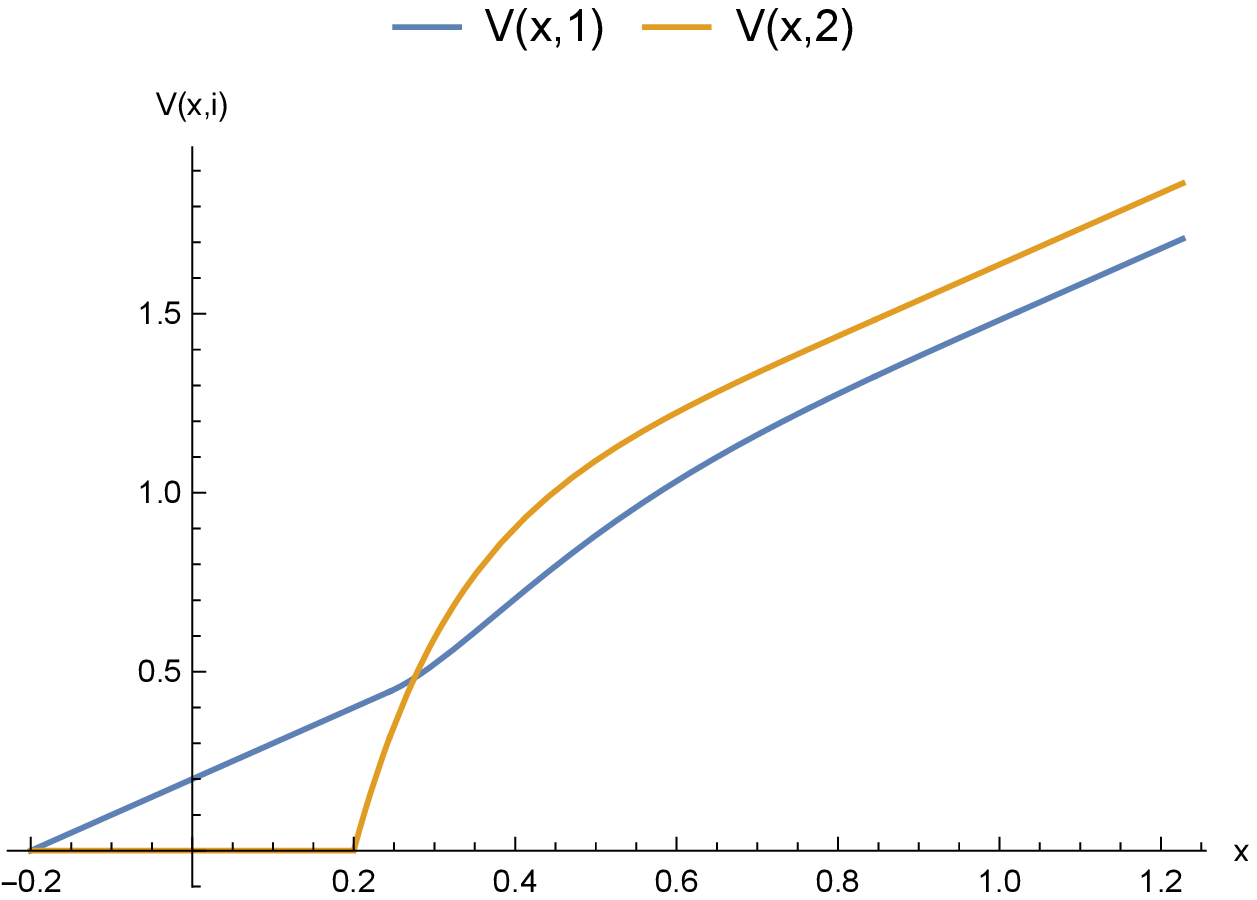}
		\caption{The value function for $\mu_{2}=0.9$}
		\label{valuefunctionc2}
	\end{minipage}
\end{figure}

Next, we choose $\mu_{2}=0.9$ and vary $\mu_{1}, \sigma_1, \lambda_{1}, \theta_{1}, \rho$ individually while keeping the other parameters fixed as in in Table \ref{tab1}. The results are shown in Tables \ref{tab4}-\ref{tab6}.

\begin{table}[htbp]
	\caption{The optimal liquidation-barrier levels for different $\mu_{1},\sigma_{1}$ in Case (C)}
	\label{tab4}
	\begin{minipage}{0.48\textwidth}
		
		\begin{tabular}{cccccc}
		\toprule  
		$\mu_1$	& -0.2 & -0.4 & -0.6 & -0.8  & -1.0\\
		\bottomrule
		
		$d_1$ &0.201 &0.214 &0.229 & 0.245&0.262      \\
		$b_2$ & 0.852&0.851 &0.849 & 0.845&0.840  \\
		$b_1$&0.960&0.983&1.004&1.022&1.036\\
		\bottomrule
	\end{tabular}
	\end{minipage}
	\begin{minipage}{0.4\textwidth}  
		\begin{tabular}{cccccc}
		\toprule  
		$\sigma_1$	& 0.2 & 0.4 & 0.6 & 0.8  & 1.0\\
		\bottomrule
		$d_1$ &0.282 &0.256 &0.235 & 0.217&0.203      \\
		$b_2$ & 0.842&0.844 &0.846 & 0.845&0.843  \\
		$b_1$&0.874&0.970&1.075&1.177&1.274\\
		\bottomrule
	\end{tabular}
	\end{minipage}

\end{table}

From Table \ref{tab4}, we note that $d_1$ and $b_1$ are increasing  when $\mu_{1}$ decreases.  These results suggest that when $\mu_{1}$ is decreasing, it is optimal to keep enough surplus to minimize the probability of liquidation. To proceed with it, the company should raise the dividend barrier $b_1$. On the other hand, the company's profitability is getting worse, which then incentives the company to liquidate earlier.

  In addition, $b_2$ decreases as $\mu_{1}$ decreases. It implies that in regime 2 the company should pay dividends earlier as $\mu_{1}$ decreases, in order to compensate the losses due to the risk of jumping to a worse regime. Moreover, we also notice that $b_1$ is increasing while $d_1$ is decreasing as $\sigma_1$ increases. These results imply that when the volatility of a regime is getting larger, it is better to maintain a high level of the reserve by setting a higher dividend barrier $b_1$ in the more volatile regime to reduce the risk of early ruin. On the other hand, postponing the liquidation option (lower liquidation barrier $d_1$) can compensate the negative impacts of an increase in volatility.

\begin{table}[htbp]
	\caption{The optimal liquidation-barrier levels for different $\lambda_{1},\theta_{1}$ in Case (C)}
	\label{tab5}
	\begin{minipage}{0.48\textwidth}
		
			\begin{tabular}{cccccc}
			\toprule  
			$\lambda_1$	&5 & 7 & 9 & 11  & 13\\
			\bottomrule
			
			$d_1$ &0.371 &0.279 &0.252 & 0.240&0.233      \\
			$b_2$ & 0.784&0.820 &0.839 & 0.850&0.855  \\
			$b_1$&0.819&0.988&1.020&1.021&1.014\\
			\bottomrule
			
		\end{tabular}
	\end{minipage}
	\begin{minipage}{0.4\textwidth}  
		
	\begin{tabular}{cccccc}
		\toprule  
		$\theta_1$	&-0.1 & -0.2 & -0.3 & -0.4  & -0.5\\
		\bottomrule
		
		$d_1$ &0.216 &0.245 &0.281 & 0.332&0.420      \\
		$b_2$ & 0.852&0.845 &0.837 & 0.828&0.815  \\
		$b_1$&1.041&1.022&0.997&0.959&0.882\\
		\bottomrule
		
	\end{tabular}
	\end{minipage}
	
\end{table}

From Table \ref{tab5}, we note that $d_1$ is decreasing as $\lambda_1$ increases. The result suggests that when $\lambda_1$ is increasing, the Markov chain is expected to stay in regime 1 for a shorter time.  Therefore it is optimal to postpone the liquidation option since the chance of jumping to regime 2 is greater. In addition, $b_1$ is a single-picked function of $\lambda_{1}$, which can indeed be explained by the same reasoning applied to discuss the relationship between $b_2$ and $\lambda_{2}$ in Section 5.2. We also notice that $b_1$ in decreasing while $d_1$ is increasing as $\theta_1$ decreases: The company should maximize the probability of hitting the dividend barrier by lowering the latter while exploiting that the probability of ruin is lower. On the other hand,  setting a higher liquidation barrier $d_1$ the company can obtain a larger liquidation value $d_1-\theta_{1}$.

\begin{table}[htbp]

	\caption{The optimal liquidation-barrier levels for different $\rho$ in Case (C)}
	\label{tab6}
	\centering
	\begin{tabular}{cccccc}
		\toprule  
		$\rho$	&0.38 & 0.40 & 0.50 & 0.60  & 0.70\\
		\bottomrule
		
		$d_1$ &0.206 &0.212 &0.245 & 0.294&0.403      \\
		$b_2$ & 0.959&0.937 &0.845 & 0.775&0.720  \\
		$b_1$&1.176&1.149&1.022&0.899&0.725\\
		\bottomrule
		
	\end{tabular}

\end{table}

From Table \ref{tab6}, we note that $d_1$ is increasing while $b_1$ is decreasing when $\rho$ increases. As already discussed, increasing $\rho$ the company becomes more impatient and therefore pays dividends and liquidates earlier.

\subsection{Continuing to increase \boldmath$\mu_{2}$}

We continue to increase $\mu_2$ and consider $\mu_{2} \in [1.10,2.0].$ Numerically solving (\ref{system41}) and (\ref{system42}) in Case (D), we obtain the results for the optimal liquidation-barrier levels {($\theta_1<d_1<\theta_2<b_2<b_1$)} that are shown in Table \ref{tab7}. 
\begin{table}[htbp]
	\caption{ The optimal liquidation-barrier levels with different $\mu_{2}$ in Case (D)}
	\label{tab7}
\centering
	\begin{tabular}{ccccccc}
		\toprule  
		 $\mu_2$	& 1.10  & 1.20 & 1.40 & 1.60  & 1.80&2.0\\
		  \bottomrule
		
	$d_1$ & 0.199 &0.185 &0.161 & 0.140&0.122& 0.106      \\
	$b_2$ & 0.855&0.854 &0.848 & 0.840&0.834& 0.830   \\
           $b_1$&1.084&1.104&1.132&1.152&1.168&1.181\\
		\bottomrule
		
	\end{tabular}
\end{table}
Therein, we note that $d_1$ continues to decrease and $b_1$ continues to increase when $\mu_2$ increases. However, $b_2$ is observed to be decreasing as $\mu_2$ increases: The company exploits the positive trend in regime 2 and increases the dividends payout by shifting the dividend barrier $b_2$ downwards. Notice that here there is no risk of liquidation through a regime change since $d_1<\theta_{2}$. Figures \ref{valuefunctiond1.2}-\ref{valuefunctiond1.8} show the plots of value function in Case (D).

\begin{figure}[htbp]
	\begin{minipage}[b]{0.5\linewidth}
		\centering
		\includegraphics[width=3in]{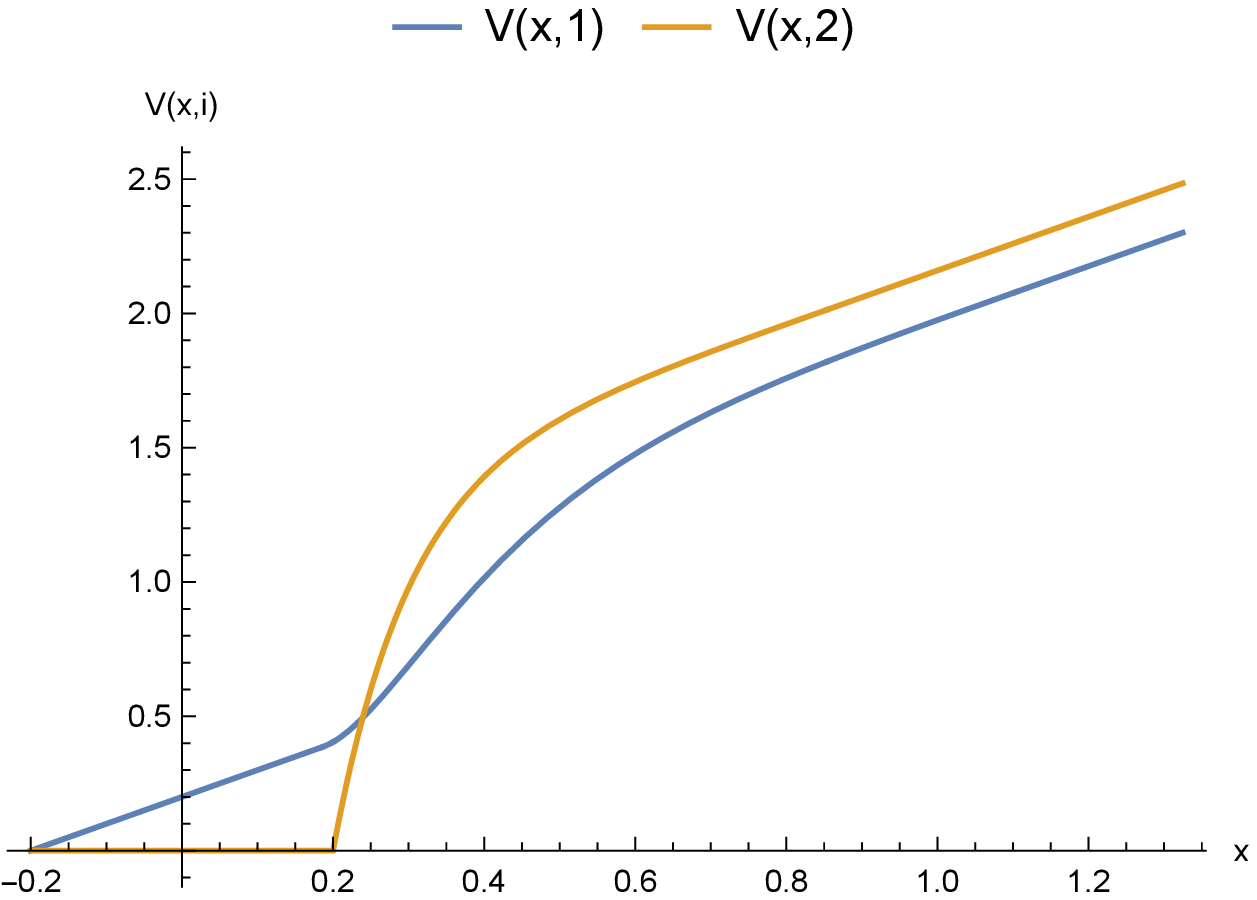}
		\caption{The value function for $\mu_{2}=1.20$}
		\label{valuefunctiond1.2}
	\end{minipage}%
\begin{minipage}[b]{0.5\linewidth}
	\centering
	\includegraphics[width=3in]{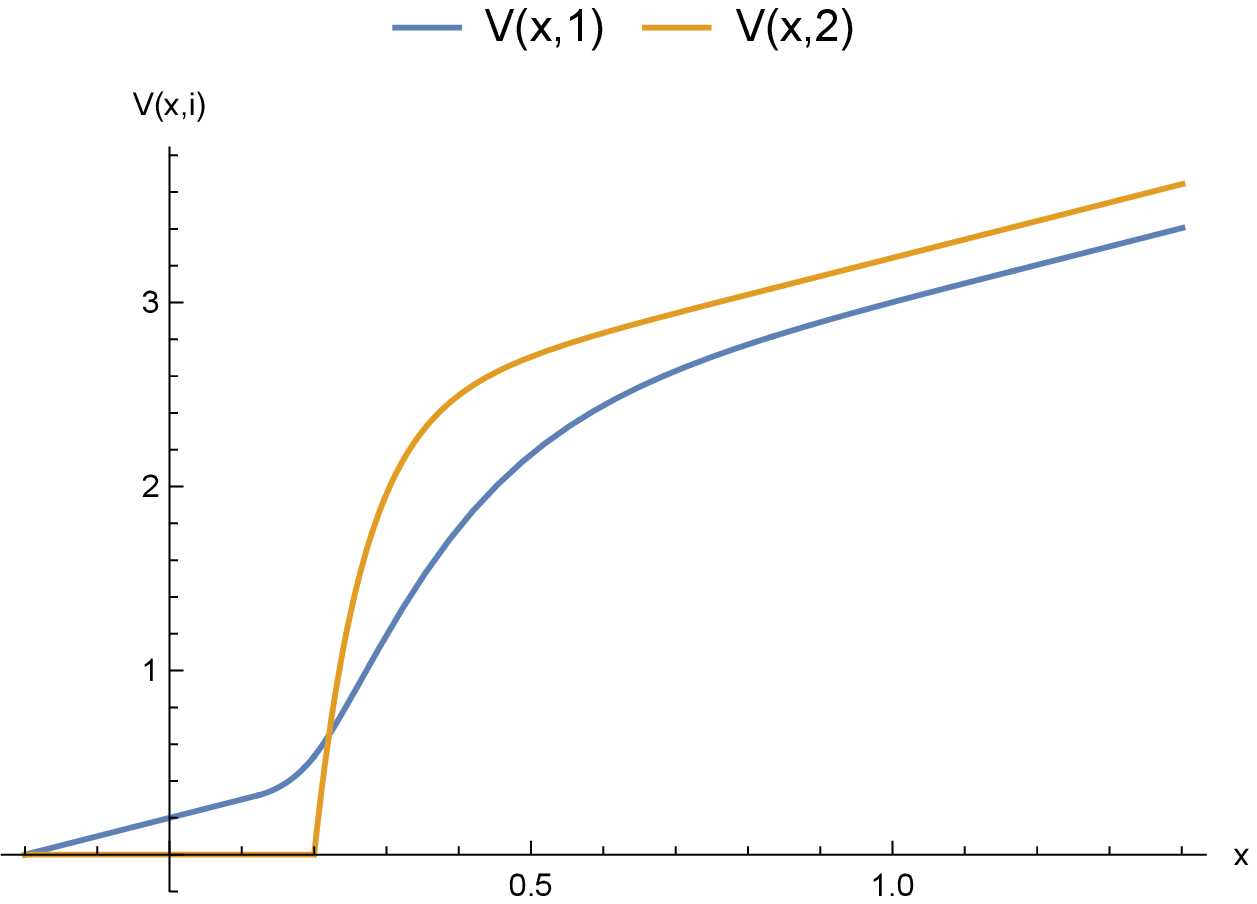}
	\caption{The value function for $\mu_{2}=1.80$}
	\label{valuefunctiond1.8}
\end{minipage}
\end{figure}

\subsection{Comparison to the case \boldmath$\theta_{1}=\theta_{2}$ } We compare our results to the ones obtained in \cite{jiang2012optimal} by assuming that $\theta_1=\theta_2$. Under this assumption, Case (D) disappears and we numerically solve the differential equations of the value functions in Case (B) and Case (C), respectively. We choose $\theta_1=\theta_2=-0.2$ and fix all the other parameters as in Table \ref{tab1}. Then we vary $\mu_2$, and the corresponding results are shown in Table \ref{tab8}.

\begin{table}[htbp]
	\caption{ The optimal dividend policies}
	\label{tab8}
	\centering

	\begin{tabular}{cccccccccccccc}
		\toprule  
		$\mu_2$	&0.1   & 0.2 & 0.35  & 0.36  && 0.37&0.38&0.39 && 0.40 &0.45&0.50       \\
		\bottomrule
		
		$d_1$ &           &         &             &  &&         0.047      &-0.050&-0.086&&   -0.109      &-0.170&-0.198  \\
		
		$b_1$&         &           &          &         &&       0.133              &0.211&0.249&&   0.279 &0.378&0.445\\
		$b_2$ &  -0.014       &0.120         & 0.236         & 0.242   &&        0.247     &0.254&0.263&&        0.272 &0.313& 0.349   \\
		\bottomrule
		
	\end{tabular}
	
\end{table}

From Table \ref{tab8}, we note that $b_2$ is decreasing when $\mu_2$ decreases. This suggests that, when $\mu_{2}>0$ decreases, it is optimal to reduce the dividend barrier $b_2$ so to maximize the probability of hitting it, even if this implies a higher risk of bankruptcy. Table \ref{tab8} also shows that $d_1$ is decreasing while $b_1$ is increasing as $\mu_2$ increases, which are behaviors similar to those in Tables \ref{tab3}-\ref{tab7}.

From Figures \ref{valuefunctiontheta1=2b}-\ref{valuefunctiontheta1=2c}, it is clear that $V(x,2)\geq V(x,1)$, which is consistent with the result in Section 5.2 of \cite{jiang2012optimal} and actually expected being $ \mu_1=-0.8$ and $\mu_2>0$.

\begin{figure}[htbp]
	\begin{minipage}[b]{0.5\linewidth}
		\centering
		\includegraphics[width=3in]{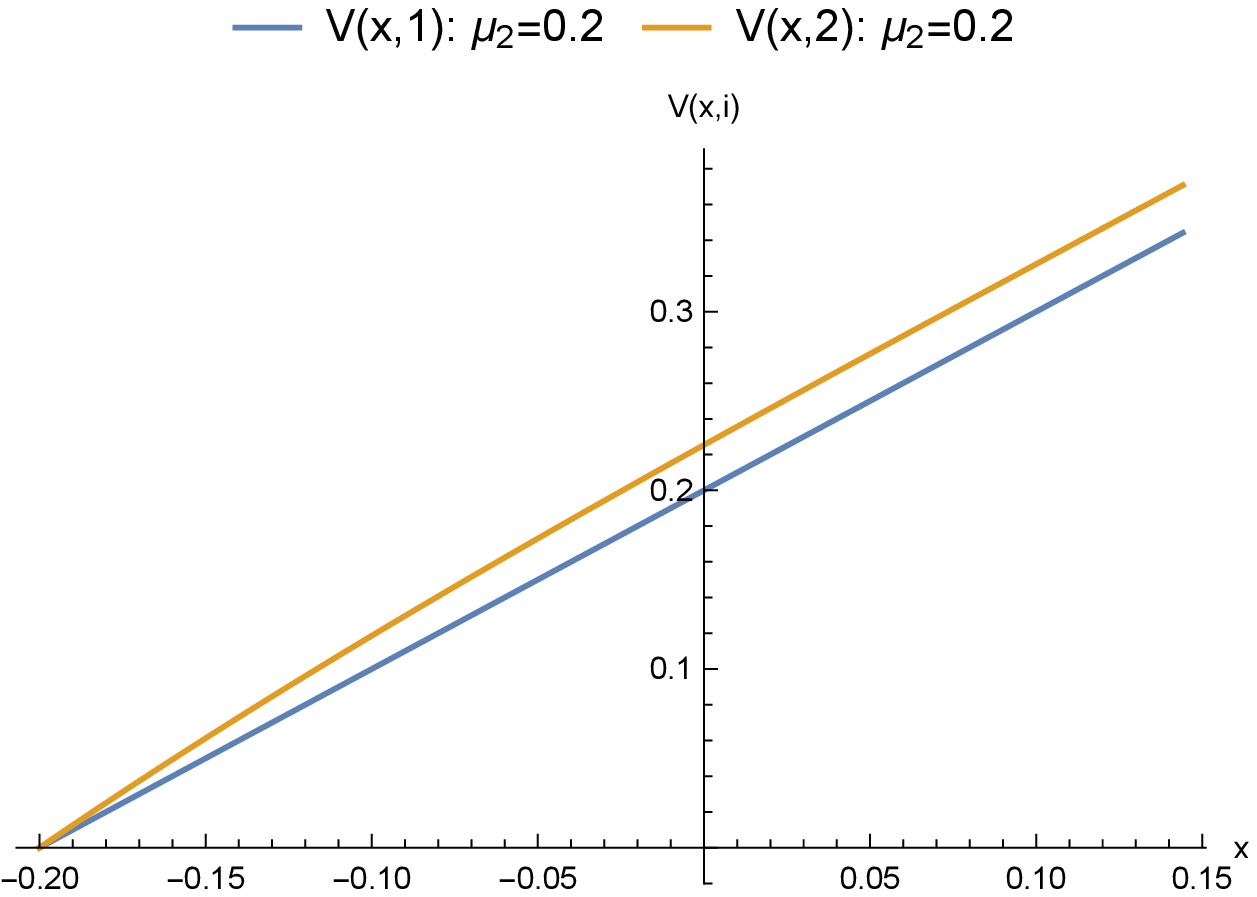}
		\caption{The value function for $\theta_1=\theta_2=-0.2$}
		\label{valuefunctiontheta1=2b}
	\end{minipage}%
\begin{minipage}[b]{0.5\linewidth}
	\centering
	\includegraphics[width=3in]{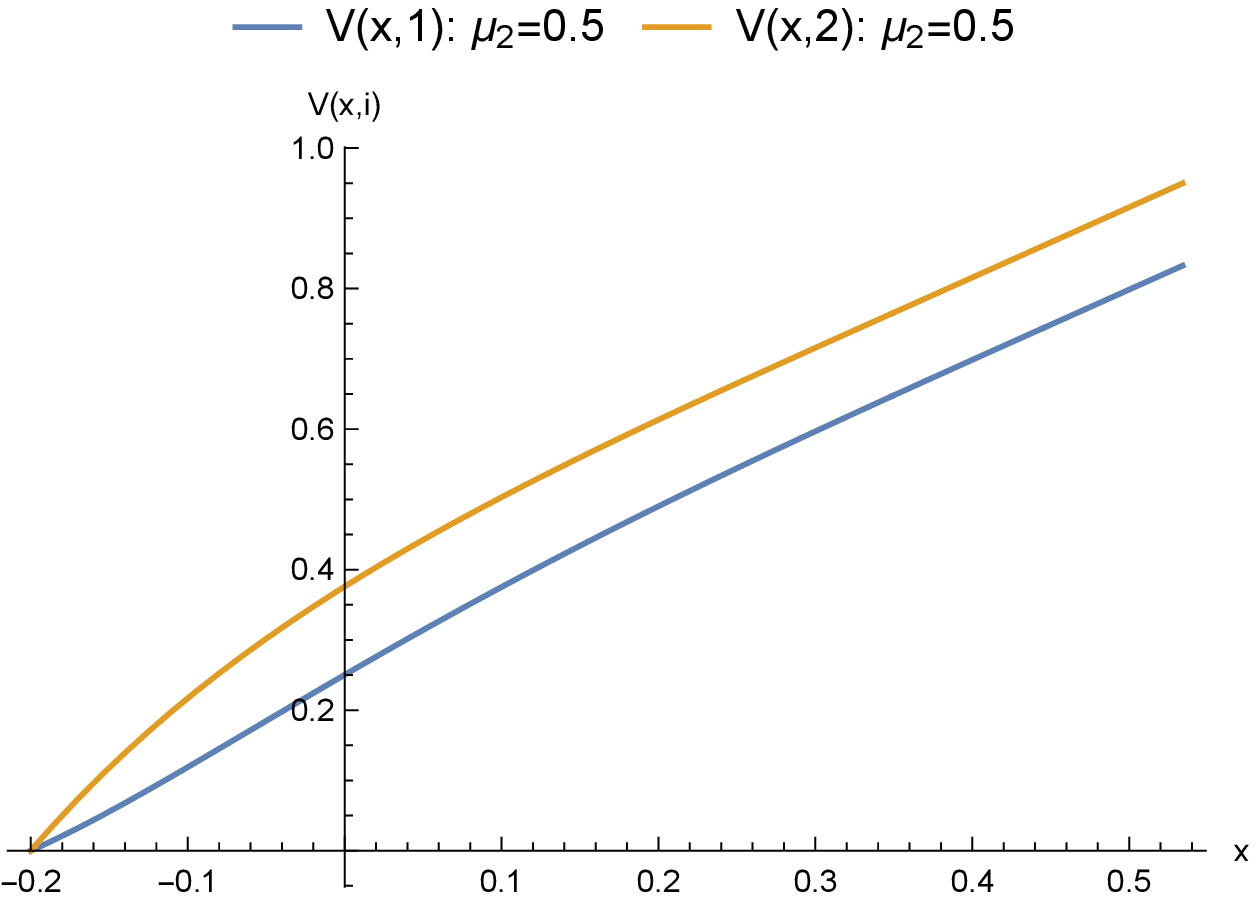}
	\caption{The value function for $\theta_1=\theta_2=-0.2$}
	\label{valuefunctiontheta1=2c}
\end{minipage}
\end{figure}

\section{Conclusions}

We have provided a analytical and numerical study of the optimal dividend problem of a company whose bankruptcy level and cash surplus' dynamics are modulated by a two-state continuous-time Markov chain. This models a macroeconomic shock that not only affects the company's profitability and the market's fluctuations, but also triggers bankruptcy differently across the two regimes. For example, during a crisis like the recent Covid-epidemics, governmental measures like the postponement of taxes' payments effectively delay the bankruptcy of insolvent companies.

Regime-dependent bankruptcy level $\theta_{i}$ and cash surplus' drifts $\mu_i$ --- which are allowed to assume different signs across the two regimes --- leads to a rich structure of the optimal policy, which can be of \emph{barrier-type} or of \emph{liquidation-barrier-type}. For example, we prove that, even if $\mu_i<0 \ \text{for all}\ i=1,2,$ the fact that $\theta_{1}<\theta_{2}$ induces the company to continue business in regime two, rather than bankrupting immediately, in order to strategically exploit a regime change to regime one. A detailed numerical study complements the theoretical analysis and allows to draw interesting economic implications about the sensitivity of the optimal strategy with respect to the model's parameters.

There are many directions towards which this work can be extended. For example, it would be interesting to allow for partial observation of the underlying macroeconomic shock, to study the role of capital injections or to consider dynamic optimal reinsurance strategies of the company. These and other aspects are left for future research.

\appendix
\setcounter{subsection}{0}
\renewcommand\thesubsection{A.\arabic{subsection}}
\section{Proofs}
\setcounter{equation}{0}
\renewcommand\theequation{A.\arabic{equation}}
\setcounter{lemma}{0}
    \renewcommand{\thelemma}{\Alph{section}.\arabic{lemma}}

\subsection{Proof of Theorem \ref{theorem4.1}}
\begin{proof}\label{ptheorem4.1}
	
	{By construction, the boundary conditions are satisfied and} we have that $w'(x,i)=1, x\in [\theta_{i},\infty).$ Therefore, we need to show that  
	\begin{align}\label{3.2-1}
\begin{aligned}
	(i)\	(\mathcal{L} -\rho)w(x,1)\leq 0, \ x> \theta_{1},\\
	(ii)\ (\mathcal{L} -\rho)w(x,2)\leq 0, \ x> \theta_{2},
\end{aligned}
	\end{align}
	hold if and only if $\mu_2 \leq (\theta_1-\theta_2)\lambda_2$.	

\setlength{\parskip}{0.5em}
The proof of (i) in (\ref{3.2-1}) is as follows. When $x \in [\theta_1,\theta_2]$,  since $w(x,2)=0,$ the left-hand side of (i) in (\ref{3.2-1}) becomes	$	\frac{1}{2}\sigma_1^2w''(x,1) +\mu_1w'(x,1) -(\lambda_{1}+\rho)w(x,1) +\lambda_{1}w(x,2) =\mu_1-(\lambda_{1}+\rho) (x-\theta_1)$, and the right-hand side of the above equation is negative since $\mu_1 \leq 0$.
		
		 When $x \in (\theta_2,\infty),$ because $w(x,1)=x-\theta_{1}, w(x,2)=x-\theta_{2}$, the left-hand side of (i) in (\ref{3.2-1}) satisfies $\mu_1-(\lambda_{1}+\rho) (x-\theta_1)+\lambda_{1}(x-\theta_2)=\mu_1+\lambda_{1}(\theta_1-\theta_{2})-\rho(x-\theta_1).$ But now $ \mu_1+\lambda_{1}(\theta_1-\theta_{2})-\rho(x-\theta_1) \leq 0$ for all $x \in(\theta_2, \infty)$ if and only if $\mu_1 \leq (\rho+\lambda_1)(\theta_2-\theta_1).$ However, the latter inequality is true because $\mu_1 \leq0.$
	
	The proof of (ii) in (\ref{3.2-1}) proceeds as follows. When $x \in (\theta_2,\infty), w(x,1)={x-\theta_1}$. Then (ii) in (\ref{3.2-1}) satisfies $	\frac{1}{2}\sigma_2^2w''(x,2) +\mu_2w'(x,2) -(\lambda_{2}+\rho)w(x,2) +\lambda_{2}w(x,1)=\mu_2-\lambda_{2}(\theta_{1}-\theta_{2})-\rho(x-\theta_{2}). $ But $\mu_2-\lambda_{2}(\theta_{1}-\theta_{2})-\rho(x-\theta_{2}) \leq 0$ if and only if $\mu_2 \leq (\theta_1-\theta_2)\lambda_2.$ 
		
	 We have thus verified that $w(x,i), i=1,2,$ is the solution to HJB equation (\ref{HJB}) and the dividend strategy leading to payoff $w(x,i)$ consists of a single jump at initial time of size $x-\theta_i$.  Such a strategy is of the form $D^{b,w}$ in Definition \ref{dividend2}, with $b_i=\theta_i$. Hence, $w(x,i),i=1,2,$ in (\ref{1wx1}) and (\ref{1wx2}) is indeed the value function if and only if $\mu_2 \leq (\theta_1-\theta_2)\lambda_2$ by Theorem \ref{verificationtheorem}.

\end{proof}

\subsection{Proof of Lemma \ref{lemma4.1} and of Theorem \ref{theorem4.2}  }
\subsubsection{Proof of Lemma \ref{lemma4.1}}
\begin{proof}\label{plemma4.1}
	From the fourth equation in (\ref{system1}), we have 
\begin{equation}\label{plemma3.2-1}
C_1 = \frac{-C_2\alpha_8^2}{\alpha_7^2}e^{(\alpha_8-\alpha_7)b_2}.
\end{equation}
Combing (\ref{plemma3.2-1}) with the first equation in (\ref{system1}), we find
	\begin{equation}\label{plemma3.2-2}
	C_2\bigg[\frac{\alpha_8^2}{\alpha_7^2}e^{(\alpha_8-\alpha_7)b_2}-e^{(\alpha_8-\alpha_7)\theta_{2}}\bigg]= Pe^{-\alpha_7\theta_{2}},
	\end{equation}
	where $P:=\frac{\mu_2 \lambda_2}{(\lambda_2+\rho)^2}+\frac{\lambda_2(\theta_2-\theta_1)}{\lambda_2+\rho}.$ On the other hand, combine (\ref{plemma3.2-1}) with the third equation in (\ref{system1}), one obtains 
	\begin{equation}\label{plemma3.1-3}
	C_2e^{\alpha_8b_2}\alpha_8(\alpha_7-\alpha_8) = \frac{\alpha_7\rho}{\rho+\lambda_{2}}>0
	\end{equation}
	Therefore, if there exists a solution $b_2> \theta_{2}$, then it must be
\begin{align}\label{plemma3.1-4}
C_2<0, \quad C_1>0.
	\end{align}

	 The equation for free-boundary $b_2$ is then obtained by combining  (\ref{plemma3.2-2}) with (\ref{plemma3.1-3}), which indeed gives
	\begin{equation*}
	\frac{\alpha_8^2}{\alpha_7^2}e^{-\alpha_7b_2} -e^{(\alpha_8-\alpha_7)\theta_2-\alpha_8b_2} = \frac{P}{\rho\alpha_7}\bigg[(\rho+\lambda_{2})(\alpha_7\alpha_8-\alpha_8^2)e^{-\alpha_7\theta_2}\bigg].
	\end{equation*}
	We now show that the above equation admits a unique solution $b_2>\theta_2$ if and only of $\mu_2>\lambda_2(\theta_1-\theta_2).$ Define $F(x):=\frac{\alpha_8^2}{\alpha_7^2}e^{-\alpha_7x} -e^{(\alpha_8-\alpha_7)\theta_2-\alpha_8x} - \frac{P}{\rho\alpha_7}[(\rho+\lambda_{2})(\alpha_7\alpha_8-\alpha_8^2)e^{-\alpha_7\theta_2}]$, and note that $F(+\infty)<0$ and $F'(x)=\frac{\alpha_8^2}{\alpha_7^2}(-\alpha_7)e^{-\alpha_7x}+\alpha_8e^{(\alpha_8-\alpha_7)\theta_2-\alpha_8x}<0.$ On the other hand,
	\begin{equation*}
	\begin{aligned}
	F(\theta_2)= e^{-\alpha_7\theta_2}\bigg[\frac{\rho\alpha_8^2-\rho\alpha_7^2-\alpha_7(\alpha_7\alpha_8-\alpha_8^2)P(\lambda_{2}+\rho)}{\rho\alpha_7^2}\bigg].
	\end{aligned}
	\end{equation*}
	
	 Since $F'(x)<0, F(+\infty)<0$, it follows that there exists a unique solution $b_2> \theta_{2}$ such that $F(b_2)=0$ if and only if  $F(\theta_{2})> 0$; i.e. if and only if $\rho\alpha_8^2-\rho\alpha_7^2-\alpha_7(\alpha_7\alpha_8-\alpha_8^2)P(\lambda_{2}+\rho)> 0$. Using that $\alpha_7+\alpha_8 = -2\mu_2/ \sigma_2^2$ and $\alpha_7\alpha_8= -2(\lambda_{2}+\rho)/\sigma_2^2,$ and performing standard manipulations, one can show that
\begin{align*}
\rho\alpha_8^2-\rho\alpha_7^2-\alpha_7(\alpha_7\alpha_8-\alpha_8^2)P(\lambda_{2}+\rho)> 0 \Longleftrightarrow  \mu_2>\lambda_{2}(\theta_{1}-\theta_{2}),
\end{align*}
which completes the proof.
	
\end{proof}

\subsubsection{Proof of Theorem \ref{theorem4.2}}

\begin{proof}\label{ptheorem4.2}
	From Lemma \ref{lemma4.1}, there exists a unique free boundary $b_2>\theta_{2}$. Then we show that $w(x,i)$ in (\ref{2wx1}) and (\ref{2wx2}) satisfy the HJB equation (\ref{HJB}) under the conditions in Theorem \ref{theorem4.2}.  {By construction, the boundary conditions are satisfied and} in (\ref{2wx1}) and (\ref{conb}), we have that $w'(x,1)=1$ for all  $ x\in [\theta_{1},\infty), w'(x,2)=1$ for all $ x\in[b_2,\infty)$ and $(\mathcal{L} -\rho)w(x,2)= 0,  x\in (\theta_{2},b_2).$ Therefore, we only need to show that 	
\begin{align}\label{3.3-1}
\begin{aligned}
	&(i)\ w'(x,2)\geq 1, \ &\forall x\in (\theta_{2},b_2),\\
	&(ii)\ (\mathcal{L} -\rho)w(x,2) \leq 0,\ &\forall  x\in [b_2,\infty),\\
	&(iii)\ (\mathcal{L} -\rho)w(x,1) \leq 0,\  &\forall x\in [\theta_{1},\infty).
\end{aligned}
	\end{align}

\textbf{	\textsl{Step 1.}} The proof of (i) in (\ref{3.3-1}) is as follows. When $x \in (\theta_2,b_2), w(x,2)=C_1e^{\alpha_7x}+C_2e^{\alpha_8x}+\frac{\lambda_{2}\mu_2}{(\rho+\lambda_{2})^2}+\frac{\lambda_{2}(x-\theta_1)}{\rho+\lambda_{2}}. $  We compute $w'''(x,2)= C_1\alpha_7^3e^{\alpha_1x}+C_2\alpha_8^3e^{\alpha_2x}>0$, where we have used the fact that $C_1>0$ and $C_2<0$  (cf. (\ref{plemma3.1-4})). Then $w''(x,2)$ is strictly increasing. Since $w''(b_2,2)=0,$ we have $ w''(x,2)\leq 0, x\in(\theta_{2},b_2)$, hence $w'(x,2)$ is decreasing when $x\in(\theta_{2},b_2).$ Since $w'(b_2,2)=1$, it follows that $w'(x,2)\geq1$ for all $x\in(\theta_{2},b_2).$
	
\setlength{\parskip}{0.5em}	
\textbf{	\textsl{Step 2.}} The proof of (ii) in (\ref{3.3-1}) is as follows. We have to show that	$\frac{1}{2}\sigma_2^2w''(x,2) +\mu_2w'(x,2) -(\lambda_{2}+\rho)w(x,2) +\lambda_{2}w(x,1) \leq 0,\  \forall  x\in [b_2,\infty).$ Define $F(x):=\frac{1}{2}\sigma_2^2w''(x,2) +\mu_2w'(x,2) -(\lambda_{2}+\rho)w(x,2) +\lambda_{2}w(x,1)  $. Since $w'(x,2)=1=w'(x,1)$ on $[b_2,\infty)$, then $F'(x)=-\rho<0$. Moreover, by continuity, $F(b_2)= F(b_{2}-) = 0.$ Therefore, $F(x) \leq 0$ for all $x\in [b_2,\infty)$ and (ii) in (\ref{3.3-1}) is satisfied.

\textbf{	\textsl{Step 3.}}  The proof of (iii) in (\ref{3.3-1}) is as follows. We have to show that
		\begin{equation}\label{3.3-2}
		\frac{1}{2}\sigma_1^2w''(x,1) +\mu_1w'(x,1) -(\lambda_{1}+\rho)w(x,1) +\lambda_{1}w(x,2) \leq0, \ \forall x\in [\theta_{1},\infty).
		\end{equation}

		For $x\in[\theta_1,\theta_2],$ since $w(x,2)=0$, left-hand side of  (\ref{3.3-2}) rewrites as $\frac{1}{2}\sigma_1^2w''(x,1) +\mu_1w'(x,1) -(\lambda_{1}+\rho)w(x,1) +\lambda_{1}w(x,2) =\mu_1-(\lambda_{1}+\rho) (x-\theta_1).$ However, since $\mu_1 \leq 0$, this shows that (iii) in (\ref{3.3-2}) holds for $x \in[\theta_1,\theta_2].$

		Consider now $x \in (\theta_2,b_2). $ Since $w'(x,1)=1,$ the left-hand side of (\ref{3.3-2}) rewrites as 
		$\frac{1}{2}\sigma_1^2w''(x,1) +\mu_1w'(x,1) -(\lambda_{1}+\rho)w(x,1) +\lambda_{1}w(x,2) =\mu_1-(\lambda_{1}+\rho)w(x,1)+\lambda_{1}w(x,2). $ Define $G(x):=\mu_1-(\lambda_{1}+\rho)w(x,1)+\lambda_{1}w(x,2).$ Then $G''(x)=\lambda_{1}w''(x,2) \leq 0,$ since $w''(x,2)$ has been shown to be negative for $x\in(\theta_2,b_2)$ in Step 1 above.  Hence,  $G'(x) $ is decreasing for all $x \in(\theta_{2},b_2).$ On the other hand, $G(\theta_{2}+)=G(\theta_{2})=\mu_1-(\lambda_1+\rho)(\theta_2-\theta_1)<0$ and $G'(b_{2}-)=G'(b_2)=-\rho<0.$ It thus follows that in order to show that $G(x)\leq 0$ for all $x\in (\theta_2,b_2)$, the following equivalent conditions should be satisfied:
		\begin{align*}
		& G(x)\leq0, \ \forall x\in(\theta_{2},b_2) \Longleftrightarrow \max_{x\in (\theta_{2},b_2)}G(x)\leq0 \\
&\Longleftrightarrow \emph{either} \ G'(\theta_{2}+)\leq 0 \
		\emph{or} \ G(x_0)\leq 0\ \text{and}\ G'(\theta_{2}+)>0, \text{where} \  x_0 \ \text{satisifies} \ G'(x_0)=0. \\
		& \Longleftrightarrow  \emph{either} \ w'(\theta_{2}+,2)\leq \frac{\lambda_{1}+\rho}{\lambda_{1}} \ \emph{or} \  w'(\theta_{2}+,2)> \frac{\lambda_{1}+\rho}{\lambda_{1}}\ \text{ and}\ G(x_0)\leq 0,
		\text{where} \ w'(x_0,2) = \frac{\lambda_{1}+\rho}{\lambda_{1}}.
		\end{align*}
		Notice that it indeed exists a unique $x_0\in(\theta_2,b_2)$ satisfying $ w'(x_0,2) = (\lambda_{1}+\rho)/\lambda_{1}.$ In fact, since $G'(\theta_2+)>0, G'(b_2)<0$ and $G''(x)\leq 0, x\in (\theta_2,b_2)$, then there exists a unique $x_0\in(\theta_2,b_2)$ satisfying $G'(x_0)=0$; i.e.,  $ w'(x_0,2) = (\lambda_{1}+\rho)/\lambda_{1}.$

Finally, consider $x \in [b_2,\infty)$ and define $F(x):=\frac{1}{2}\sigma_1^2w''(x,1) +\mu_1w'(x,1) -(\lambda_{1}+\rho)w(x,1) +\lambda_{1}w(x,2)  $.   Because $F'(x)=-\rho<0,$ and $F(b_2)= F(b_{2}-) \leq 0$ under the above equivalent conditions, we have $F(x)\leq0$ for all $x \in [b_2,\infty)$. Hence, $(\mathcal{L} -\rho)w(x,1) \leq 0,\  \forall x\in [\theta_{1},\infty),$ and this shows (iii) in (\ref{3.3-1}).

		 We have verified that $w(x,i), i=1,2,$ is the solution of HJB equation under a set of easily verifiable equivalent conditions. Finally,  we notice that the barrier strategy $D^{b,w}$ as in Definition \ref{dividend2} associated to $w(x,i)$ in  (\ref{2wx1}) and (\ref{2wx2}) is such that (\ref{dividendequation2}) and (i)-(iii) of Definition \ref{dividend2} hold true.  Therefore, $w(x,i),i=1,2,$ is indeed the value function by Theorem \ref{verificationtheorem} and $D^{b,w}$ is an optimal policy.
\end{proof}

\subsection{Proof of Theorem \ref{theorem4.3}}\label{ptheorem4.3}
 Throughout this proof we denote by $w^{(n)}$ the $n$-th derivative of the function $w, n\geq 4$, with respect to $x$. Before we proceed with the proof, we need the following lemmata.

\begin{lemma}\label{regularity1}
 Set $I_i:=(\theta_i,\infty), i=1,2.$ Then the function $w(x,i)$ in (\ref{3wx1}) and (\ref{3wx2}) is such that $w(\cdot,1) \in C^1(I_1) \ \cap  \ C^2(I_1 \backslash \{d_1\})\ \cap\ C^{\infty}(I_1 \backslash \{d_1,b_2,b_1\})$ and $w(\cdot,2) \in  C^2(I_2)  \cap C^4(I_2 \backslash \{d_1,b_2\}) \cap C^{\infty}(I_2 \backslash \{d_1,b_1,b_2\}).$ 
\end{lemma}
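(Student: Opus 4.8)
The plan is to argue piecewise and then to inspect the free boundaries one at a time. On each maximal open subinterval delimited by the free boundaries, $w(\cdot,i)$ is by construction a finite linear combination of exponentials plus an affine term, hence real-analytic and in particular $C^\infty$ there. This immediately gives $w(\cdot,1)\in C^\infty(I_1\setminus\{d_1,b_2,b_1\})$ and $w(\cdot,2)\in C^\infty(I_2\setminus\{d_1,b_1,b_2\})$, and the same observation yields at once the claimed $C^4$- (in fact $C^\infty$-) regularity of $w(\cdot,2)$ on $I_2\setminus\{d_1,b_2\}$. Note that the only interior junction points to be examined are $d_1,b_2,b_1$ for $w(\cdot,1)$ and $d_1,b_2$ for $w(\cdot,2)$: indeed $\theta_2$ is the left endpoint of $I_2$, while $w(x,2)=x+K_1$ on all of $[b_2,\infty)$, so $b_1$ is not a junction for $w(\cdot,2)$.

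Next I would record the matchings imposed by construction. The boundary and smooth-fit conditions (\ref{system31}) force $w(\cdot,1)$ to be $C^1$ at $d_1$ and at $b_2$, and $C^2$ at $b_1$; in particular $w(\cdot,1)\in C^1(I_1)$. Likewise (\ref{system32}) forces $w(\cdot,2)$ to be $C^1$ at $d_1$ and $C^2$ at $b_2$. Together with the piecewise smoothness, this already establishes all the stated regularity except for the upgrade from $C^1$ to $C^2$ at $b_2$ for $w(\cdot,1)$, and at $d_1$ for $w(\cdot,2)$.

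The key step is a bootstrap exploiting that the governing second-order ODE is the \emph{same} on both sides of these two points. For $w(\cdot,1)$ near $b_2$, equations (\ref{4.8}) and (\ref{4.11}) coincide, so on $(d_1,b_1)\setminus\{b_2\}$ one may write
\begin{equation*}
w''(x,1)=\frac{2}{\sigma_1^2}\Big[(\lambda_1+\rho)w(x,1)-\mu_1 w'(x,1)-\lambda_1 w(x,2)\Big].
\end{equation*}
Letting $x\to b_2^{\pm}$ and using that $w(\cdot,1)$ is $C^1$ at $b_2$ while $w(\cdot,2)$ is continuous there (being $C^2$), the two one-sided limits of the right-hand side agree, whence $w''(b_2-,1)=w''(b_2+,1)$ and $w(\cdot,1)$ is $C^2$ at $b_2$. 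An identical computation from (\ref{4.7}) and (\ref{4.9}), namely
\begin{equation*}
w''(x,2)=\frac{2}{\sigma_2^2}\Big[(\lambda_2+\rho)w(x,2)-\mu_2 w'(x,2)-\lambda_2 w(x,1)\Big]
\end{equation*}
on $(\theta_2,b_2)\setminus\{d_1\}$, together with the $C^1$-matching of $w(\cdot,2)$ and the continuity of $w(\cdot,1)$ at $d_1$, yields $w''(d_1-,2)=w''(d_1+,2)$, so $w(\cdot,2)$ is $C^2$ at $d_1$. Combining these upgrades with the imposed $C^2$-matchings at $b_1$ (for regime $1$) and $b_2$ (for regime $2$) gives $w(\cdot,1)\in C^2(I_1\setminus\{d_1\})$ and $w(\cdot,2)\in C^2(I_2)$, completing the argument.

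The point that needs care — and the reason $d_1$ must be excluded from the $C^2$-statement for $w(\cdot,1)$ — is that this bootstrap is available only where the ODE is the same on both sides. At $d_1$ in regime $1$ the left branch is the liquidation line $w(x,1)=x-\theta_1$, which does not solve (\ref{4.8}); hence $w''(d_1-,1)=0$ while $w''(d_1+,1)$ is generically nonzero, so $w(\cdot,1)$ is in general only $C^1$ there. Beyond correctly tracking, at each junction, which coupling term ($w(\cdot,2)$ for regime $1$, $w(\cdot,1)$ for regime $2$) is continuous and to what order, I expect no genuine difficulty.
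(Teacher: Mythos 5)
Your proof is correct and follows essentially the same route as the paper: piecewise real-analyticity on each open subinterval, the smooth-fit conditions from (\ref{system31})--(\ref{system32}), and the ODE-based bootstrap (solving for $w''$ from the common second-order equation on both sides of the junction) to upgrade $w(\cdot,1)$ to $C^2$ at $b_2$ and $w(\cdot,2)$ to $C^2$ at $d_1$. The only cosmetic difference is that you read off the $C^4$ regularity of $w(\cdot,2)$ on $I_2\setminus\{d_1,b_2\}$ directly from the explicit exponential formulas, whereas the paper derives it by differentiating (\ref{4.7}) and (\ref{4.9}) using $w(\cdot,1)\in C^2(I_2\setminus\{d_1\})$; both are valid.
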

\begin{proof}
	First, note that by construction $w(\cdot,1) \in C^1(I_1) \ \cap  \ C^2(I_1 \backslash \{d_1,b_2\})\ \cap\ C^{\infty}(I_1 \backslash \{d_1,b_2,b_1\})$ and $ w(\cdot,2) \in C^1(I_2) \ \cap \ C^2(I_2 \backslash \{d_1\}) \ \cap\ C^{\infty}(I_2 \backslash \{d_1,b_2\}).$
	
Also, for $x \in (d_1,b_1)$, $w(x,1)$ satisfies $	\frac{1}{2}\sigma_1^2w''(x,1) +\mu_1w'(x,1) -(\lambda_{1}+\rho)w(x,1) +\lambda_{1}w(x,2) =0$ by (\ref{4.8}) and (\ref{4.11}). Since $w'(b_2,1)=w'(b_{2}-,1), w(b_2,1)=w(b_{2}-,1), w(b_2,2)=w(b_{2}-,2)$, it follows that $w''(b_2,1)=w''(b_{2}-,1).$ Similarly, when $x\in(\theta_2,b_2)$, by (\ref{4.7}) and (\ref{4.9}), we obtain that $ w''(d_1,2)=w''(d_{1}-,2)$. Therefore, $  w(\cdot,1) \in C^1(I_1) \ \cap  \ C^2(I_1 \backslash \{d_1\})\ \cap\ C^{\infty}(I_1 \backslash \{d_1,b_2,b_1\}) $  and $                w(\cdot,2) \in  \ C^2(I_2) \ \cap\ C^{\infty}(I_2 \backslash \{d_1,b_2\}). $
	
	When $\theta_{2}<x <b_2,$ we have by (\ref{4.7}) and (\ref{4.9}) that $	\frac{1}{2}\sigma_2^2w''(x,2) +\mu_2w'(x,2) -(\lambda_{2}+\rho)w(x,2) +\lambda_{2}w(x,1) =0. $ Since $w(\cdot,1) \in C^2(I_2\backslash \{d_1\}), $ we see that $w(\cdot,2) \in C^4((\theta_{2},b_2)\backslash \{d_1\}) $. Hence, we obtain that $w(\cdot,2) \in  C^2(I_2)  \cap C^4(I_2 \backslash \{d_1,b_2\}) \cap C^{\infty}(I_2 \backslash \{d_1,b_1,b_2\}).$

\end{proof}

\begin{lemma}\label{zero2}
Suppose that $w''(b_2,1)=w''(b_{2}-,1)<0, w'(b_2,1)\geq 1.$	Define the equation $f(x):= w''(x,1) =\widehat{ C_3}\alpha_3^2e^{\alpha_3x}+\widehat{C_4}\alpha_4^2e^{\alpha_4x}+\widehat{C_5}\alpha_5^2e^{\alpha_5x}+\widehat{C_6}\alpha_6^2e^{\alpha_6x}, x \in (d_1,b_2)$. Then $f$ admits a real zero and, in particular, at most 4 zeros when $x\in(d_1,b_2).$
\end{lemma}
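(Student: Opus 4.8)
The plan is to split the statement into two claims: first, the \emph{existence} of a zero of $f$ in $(d_1,b_2)$, which I would obtain from the boundary data via the intermediate value theorem; and second, the \emph{upper bound} on the number of zeros, which follows from the classical fact that a real linear combination of exponentials with distinct real exponents forms a Chebyshev system.

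For the existence, recall that by construction $w(x,1)=x-\theta_{1}$ on $[\theta_{1},d_1]$, so the $C^1$ smooth-fit conditions in (\ref{system31}) give $w'(d_1+,1)=1$. By Lemma \ref{regularity1}, $w'(\cdot,1)$ and $w''(\cdot,1)$ are continuous at $b_2$, and by hypothesis $w''(b_2-,1)<0$; hence $w'(\cdot,1)$ is strictly decreasing on a left-neighbourhood of $b_2$, and together with $w'(b_2,1)\geq 1$ this produces a point $x^\ast\in(d_1,b_2)$ with $w'(x^\ast,1)>1$. Applying the mean value theorem on $[d_1,x^\ast]$ yields $\xi\in(d_1,x^\ast)$ with
$$f(\xi)=w''(\xi,1)=\frac{w'(x^\ast,1)-w'(d_1+,1)}{x^\ast-d_1}=\frac{w'(x^\ast,1)-1}{x^\ast-d_1}>0.$$
Since $f$ is smooth on $(d_1,b_2)$ (being a finite sum of exponentials) and $f(b_2-)=w''(b_2-,1)<0$, the intermediate value theorem applied on $[\xi,b_2)$ delivers a zero of $f$ inside $(\xi,b_2)\subset(d_1,b_2)$.

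For the upper bound, I would invoke (or prove) the standard lemma that a nontrivial function $g(x)=\sum_{j=1}^{k}a_j e^{\beta_j x}$ with $\beta_1<\cdots<\beta_k$ has at most $k-1$ real zeros. The cleanest proof is by induction on $k$ using Rolle's theorem: the zeros of $g$ coincide with those of $e^{-\beta_1 x}g(x)$, and differentiating the latter removes the constant term $a_1$, leaving an exponential sum with the $k-1$ distinct exponents $\beta_j-\beta_1$, $j\geq 2$; Rolle's theorem together with the inductive hypothesis then bounds the number of zeros of $g$ by $k-1$. Applying this with $k=4$ to $f(x)=\widehat{C_3}\alpha_3^2 e^{\alpha_3 x}+\widehat{C_4}\alpha_4^2 e^{\alpha_4 x}+\widehat{C_5}\alpha_5^2 e^{\alpha_5 x}+\widehat{C_6}\alpha_6^2 e^{\alpha_6 x}$, whose four exponents $\alpha_3<\alpha_4<\alpha_5<\alpha_6$ are distinct, shows that $f$ has at most $3$, and in particular at most $4$, zeros on $(d_1,b_2)$.

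I expect the only delicate point to be the existence half: one must correctly exploit that $w''(b_2-,1)<0$ forces $w'(\cdot,1)$ to exceed its boundary value $w'(b_2,1)\geq 1$ just inside $b_2$, and hence to rise strictly above its value $1$ at $d_1$, thereby producing a genuine sign change of $f$. The upper-bound half is routine once the Chebyshev/Rolle lemma is in place.
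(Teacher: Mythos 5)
Your proof is correct, and both halves rest on the same facts the paper uses. For the existence of a zero, the paper argues by contradiction: if $f=w''(\cdot,1)$ never vanished on $(d_1,b_2)$, continuity and $f(b_2-)<0$ would force $f<0$ throughout, so $w'(\cdot,1)$ would decrease strictly from $w'(d_1+,1)=1$, contradicting $w'(b_2,1)\geq 1$. Your version runs the same comparison of boundary data directly — using $w''(b_2-,1)<0$ to find $x^\ast$ with $w'(x^\ast,1)>1$, then the mean value theorem to produce a point where $f>0$, then the intermediate value theorem — which is logically equivalent but has the small merit of exhibiting an explicit sign change. For the bound on the number of zeros, the paper simply invokes Theorem 1 of the cited reference on zeros of exponential sums, whereas you prove the Chebyshev-system fact from scratch by Rolle induction; your argument is self-contained and in fact yields the sharper bound of at most $3$ zeros for a nontrivial combination of four distinct exponentials (the paper only needs, and only states, "at most $4$"). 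Both routes are fine; yours trades a citation for half a page of standard argument and a marginally stronger conclusion.
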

\begin{proof}
	Firstly,  we show that there exists zeros. We know that $w'(d_{1}+,1)=1$ and by assumption that $ w''(b_2,1)=w''(b_{2}-,1)<0, w'(b_2,1)\geq 1$. Since $w''(x,1)$ is continuous when $x \in (d_1,b_2),$ if there does not exist zeros when $x\in(d_1,b_2),$ then $w''(x,1)<0$ when $x\in (d_1,b_2)$. Hence $w'(x,1)$ is strictly decreasing, which is a contradiction with the fact that  $w'(d_1,1)=1$ and $ w'(b_2,1)\geq1.$ Therefore, there exists at least a zero of $f$ when $x\in (d_1,b_2).$
	
	Then we prove that there exists at most 4 zeros.
	Letting $\beta_3:=\widehat{C_3}\alpha_3^2, \beta_2:=\widehat{C_4}\alpha_4^2, \beta_1:=\widehat{C_5}\alpha_5^2, \beta_0:=\widehat{C_6}\alpha_6^2$ and $f_3:=e^{\alpha_3}, f_2:=e^{\alpha_4}, f_1:=e^{\alpha_5}, f_0:=e^{\alpha_6}$, we can write $f(x)=\beta_0f_0^x+\beta_1f_1^x+\beta_2f_2^x+\beta_3f_3^x=\sum_{i=0}^{3}\beta_if_i^x$. Since $f_0>f_1>f_2>f_3>0, \beta_0\neq0,$ then $f(x)$ has at most 4 zeros when $x\in(d_1,b_2) $ by Theorem 1 in \cite{tossavainen2006zeros}.

\end{proof}

\begin{proof}[\textbf{Proof of Theorem \ref{theorem4.3}}]

{By construction, the boundary conditions are satisfied.} The rest of the  proof is organized in four steps.

\setlength{\parskip}{0.5em}
\textbf{	\textsl{Step 1.}}  Firstly, we show that if $\mu_2\geq0$ and $w'(\theta_{2}+,2)\geq1$, then $w(x,2)$ is concave. We know that $w''(x,2)=0$ for all $x\in[b_2,\infty).$ By Lemma \ref{regularity1}, we also know that  $w(\cdot,2) \in C^4(I_2 \backslash \{d_1, b_2\})$. Applying It\^o's formula to $ (e^{-\rho t}w''(X^D_{t},2))_{t\in [0,\xi]}$, where $\xi:=\xi_1 \wedge \xi_2$ with $\xi_1:=\inf\{t\geq 0:X_t^D \geq b_{2}\} $ and $ \xi_2:= \inf \{t\geq 0:X^D_t\le \theta_2\}$, we have 
	\begin{align*}
	\mathbb{E}_{x,2}\big[e^{-\rho \xi}w''(X_\xi^D,2)\big]&=w''(x,2)+\mathbb{E}_{x,2}\bigg[\int_{0}^{\xi}e^{-\rho t}(\frac{1}{2}\sigma_2^2w^{(4)}(X^D_t,2)+ \mu_{2}w'''(X^D_t,2)\\&-(\rho+\lambda_{2}) w''(X^D_t,2)+ \lambda_{2}w''(X^D_t,1))\bigg]\textrm{d}t.
	\end{align*}	
Since $(\mathcal{L}-\rho)w''(X^D_t,2)=0$ for all $t \leq \xi$, it follows that
	\begin{align*}
	\mathbb{E}\big[e^{-\rho \xi_1}w''(X^D_{\xi_1},2)\mathbb{I}_{\{\xi_1<\xi_2\}}\big]+\mathbb{E}\big[e^{-\rho \xi_2}w''(X^D_{\xi_2},2)\mathbb{I}_{\{\xi_1 \geq \xi_2\}}\big]= w''(x,2).
	\end{align*}
	Since $w''(x,2)=0$ for all $x \in [b_2, \infty)$, we have $\mathbb{E}\big[e^{-\rho \xi_1}w''(X^D_{\xi_1},2)\mathbb{I}_{\{\xi_1<\xi_2\}}\big]=0.$ Thus, we only need to compute $w''(\theta_{2}+,2).$
Notice that 
\begin{align}\label{a13}
w(\theta_{2},2)=C_{1}e^{\alpha_7\theta_{2}}+C_{2}e^{\alpha_8\theta_{2}}+\frac{\lambda_{2}\mu_2}{(\rho+\lambda_{2})^2}+\frac{\lambda_{2}(\theta_{2}-\theta_{1})}{\rho+\lambda_{2}}=0,
\end{align}
since $\mu_2\geq 0$, we have $\frac{\lambda_{2}\mu_2}{(\rho+\lambda_{2})^2}+\frac{\lambda_{2}(\theta_{2}-\theta_{1})}{\rho+\lambda_{2}}>0$, and (\ref{a13}) implies  
\begin{align}\label{a14}
C_{1}e^{\alpha_7\theta_2}+C_{2}e^{\alpha_8\theta_2}<0.
\end{align}
On the other hand, since $w'(\theta_{2}+,2)\geq1$ by assumption, we see that $w'(\theta_{2}+,2)= C_{1}\alpha_7e^{\alpha_7\theta_2}+C_{2}\alpha_8e^{\alpha_8\theta_2}+\frac{\lambda_{2}}{\rho+\lambda_{2}}\geq1 $, so that
\begin{align}\label{a15}
C_{1}\alpha_7e^{\alpha_7\theta_2}+C_{2}\alpha_8e^{\alpha_8\theta_2}>0.
\end{align}

Multiplying (\ref{a14}) by $\alpha_7>0$, substituting the result into (\ref{a15}) and recalling that $\alpha_8<0$, we have $C_2<0$ and, from (\ref{a14}), $C_1<-C_2e^{(\alpha_8-\alpha_7)\theta_2}$. Then 
\begin{align*}
w''(\theta_{2}+,2)&= C_{1}\alpha_7^2e^{\alpha_7\theta_2}+C_{2}\alpha_8^2e^{\alpha_8\theta_2}< -C_2e^{(\alpha_8-\alpha_7)\theta_2}\alpha_7^2e^{\alpha_7\theta_2}+C_{2}\alpha_8^2e^{\alpha_8\theta_2} \\
&=C_2(\alpha_8^2-\alpha_7^2)e^{\alpha_8 \theta_2}\leq0.
\end{align*}
where we have used the fact $\alpha_8^2-\alpha_7^2\geq0 $ if and only if $ \mu_2\geq0.$ Therefore, if $\mu_2\geq 0$ and $w'(\theta_{2}+,2)\geq 1$, then $w''(x,2) \leq 0, x\in(\theta_2,b_2)$. Hence, $w(x,2)$ is concave.

\setlength{\parskip}{0.5em}
\textbf{	\textsl{Step 2.}}  Now we show that the candidate value function $w(x,1)$ as in (\ref{3wx1}) satisfies the HJB equation under either of (i), (ii), (iii) of Theorem \ref{theorem4.3}.

$\underline{\textsl{Step 2-(a).}}$ Firstly,  we consider $x \in [\theta_1,\theta_2].$ Since $ 1-w'(x,1) =0$, we should prove that $\frac{1}{2}\sigma_1^2w''(x,1) +\mu_1w'(x,1) -(\lambda_{1}+\rho)w(x,1) +\lambda_{1}w(x,2) \leq0.$ Since $w(x,2)=0, x\in[\theta_1,\theta_2]$ and $\mu_1 \leq 0$, then the inequality holds because $\frac{1}{2}\sigma_1^2w''(x,1) +\mu_1w'(x,1) -(\lambda_{1}+\rho)w(x,1) +\lambda_{1}w(x,2) =\mu_1-(\lambda_{1}+\rho) (x-\theta_1) \leq0.$

$\underline{\textsl{Step 2-(b).}}$ Now we consider $x \in [b_2,b_1).$ Since	$\frac{1}{2}\sigma_1^2w''(x,1) +\mu_1w'(x,1) -(\lambda_{1}+\rho)w(x,1) +\lambda_{1}w(x,2) =0,$	we should prove that $1-w'(x,1)\leq 0.$ From (\ref{3wx1}) we have that $w(x,1)=C_{7}e^{\alpha_1x}+C_{8}e^{\alpha_2x}+\frac{\lambda_{1}\mu_1}{(\rho+\lambda_{1})^2}+\frac{\lambda_{1}(x+K_1)}{\rho+\lambda_{1}}$ and we claim that $C_7>0, C_8<0$. Indeed, from $w'(b_1,1)=w'(b_{1}-,1)$, $w''(b_1,1)=w''(b_{1}-,1)$ in (\ref{system31}), we have $C_{7}\alpha_1e^{\alpha_1b_1}+C_{8}\alpha_2e^{\alpha_2b_1}+\frac{\lambda_{1}}{\rho+\lambda_{1}}=1$ and $C_{7}\alpha_1^2e^{\alpha_1b_1}+C_{8}\alpha_2^2e^{\alpha_2b_1}=0$, from which we can obtain $C_{7}>0$ and $C_{8}<0$ by simple calculations using that $\alpha_1>0$ and $\alpha_2<0$.

Because now $w'''(x,1)=C_{7}\alpha_1^3e^{\alpha_1x}+C_{8}\alpha_2^3e^{\alpha_2x}>0$, $w''(x,1)$ is strictly increasing. Since $w''(b_1,1)=0,$ we have $w''(b_2,1)<0$ and $w''(x,1)\leq 0$ when $x\in [b_2,b_1),$ and hence $w'(x,1)$ is decreasing on that interval. But $w'(b_1,1)=1$, and therefore $w'(x,1)\geq1$ when $x\in [b_2,b_1).$

$\underline{\textsl{Step 2-(c).}}$  We take $x \in [b_1, \infty).$ Since $w'(x,1)=1,$ we should prove that $	\frac{1}{2}\sigma_1^2w''(x,1) +\mu_1w'(x,1) -(\lambda_{1}+\rho)w(x,1) +\lambda_{1}w(x,2) \leq 0.$ Define $F(x):=\frac{1}{2}\sigma_1^2w''(x,1) +\mu_1w'(x,1) -(\lambda_{1}+\rho)w(x,1) +\lambda_{1}w(x,2) ={\mu_1-(\lambda_1+\rho)w(x,1)+\lambda_1w(x,2)}$.  Because $F'(x)=-\rho<0$, {as $w'(x,i)=1 \ \forall x \geq b_1>b_2, i \in \mathcal{S},$} and $F(b_1)=F(b_{1}-)=0,$ it follows that $F(x)\leq 0$ when $x\in [b_1,\infty).$
	
$\underline{\textsl{Step 2-(d).}}$ Now we consider $x \in (d_1,b_2).$  Since $\frac{1}{2}\sigma_1^2w''(x,1) +\mu_1w'(x,1) -(\lambda_{1}+\rho)w(x,1) +\lambda_{1}w(x,2) =0$ by (\ref{4.8}), we should prove that $1-w'(x,1)\leq 0$. Differentiating the previous second-order differential equation two times respect to $x$, we obtain 
\begin{align*}
\frac{1}{2}\sigma_1^2w^{(4)}(x,1) +\mu_1w'''(x,1) -(\lambda_{1}+\rho)w''(x,1)+\lambda_{1}w''(x,2)  =0, \ x \in (d_1,b_2).
\end{align*}
Since $w''(x,2)\leq 0$ when $x \in (d_1,b_2)$ by Step 1,  we have $\frac{1}{2}\sigma_1^2w^{(4)}(x,1) +\mu_1w'''(x,1) -(\lambda_{1}+\rho)w''(x,1) \geq 0.$

From Step 2-(b), we know that $ w''(b_2,1)=w''(b_{2}-,1)<0, w'(b_2,1)\geq 1$. Therefore, by Lemma \ref{zero2}, there exists $N$ points $d_1<x_1<,...,<x_N<b_2$ such that $w''(x_i,1)=0, i \in \{1,...,N\}, N \leq 4.$ Firstly,  we consider  $x \in [x_N,b_2)$.  Since $w''(b_{2}-,1)=w''(b_2,1)<0$ by Step 2-(b),  the weak maximum principle of Lemma \ref{weak} implies that 
\begin{align*}
w''(x,1) \leq \sup \big\{w''(x_N,1)^+,w''(b_{2}-,1)^+\big\} =0,
\end{align*}
where $u(x)^+ := \max\{u(x),0\}.$ Next, iterating the same argument as above in the interval $[x_i,x_{i+1}), i=1,...,N-1,$ we obtain that $w''(x,1)\leq 0 $ when $x \in [x_1,b_2).$ Since $w'(b_{2}-,1)\geq 1$ by Step 2-(b), we have $w'(x,1)\geq1$ when $x \in [x_1,b_2)$. Finally, we consider $x \in (d_1,x_1)$ and claim that $w''(d_{1}+,1)\geq 0$. Indeed,  if  $w''(d_{1}+,1)<0$, since $ w''(x_1,1)=0$ and $x_1$ is the smallest zero of $w''(\cdot,1)$ in the interval $(d_1,b_2)$, it follows that $w''(x,1)<0$ when $x \in (d_1,x_1).$ But now because $w'(d_{1}+,1)=w'(d_{1},1)=1,$  we have $w'(x_1,1)< 1$  which is a contradiction with the fact that $w'(x,1) \geq 1$ for all $x \in [x_1,b_2).$ Therefore, $w''(d_{1}+,1)\geq0.$

Since $w''(d_{1}+,1)\geq 0$ and  $w''(x_1,1)=0$, we find $w''(x,1)\geq 0$ when $x \in (d_1,x_1).$ Because $w'(d_1,1)=w'(d_{1}+,1)=1,$ it follows that $w'(x,1)\geq 1$ for all $x \in (d_1,x_1).$
	
$\underline{\textsl{Step 2-(e).}}$ Finally, we consider $x \in (\theta_2,d_1].$ Since $ 1-w'(x,1) =0$, we should prove that $\frac{1}{2}\sigma_1^2w''(x,1) +\mu_1w'(x,1) -(\lambda_{1}+\rho)w(x,1) +\lambda_{1}w(x,2) \leq0.$ Because the left-hand side of the previous inequality equals $H(x):=\mu_1-(\lambda_{1}+\rho) w(x,1)+\lambda_1 w(x,2)$, our aim is therefore to prove that $H(x) \leq 0$ when $x\in (\theta_2,d_1].$  
	
We notice that $H(\theta_{2})=\mu_1-(\lambda_{1}+\rho)(\theta_{2}-\theta_{1})<0$ and $H''(x)=w''(x,2)\leq 0$ by Step 1. Also, $H(d_{1}) =\mu_1-(\lambda_{1}+\rho) w(d_{1},1)+\lambda_1 w({d_{1}},2)= \mu_1-(\lambda_{1}+\rho) w(d_{1}+,1)+\lambda_1 w({d_{1}+},2),$ where the continuity of $w(x,i)$ at $x=d_1$ has been used. When $x \in (d_1,b_2),$ taking limits as $x \downarrow d_1$, from (\ref{4.8}) we have $	\frac{1}{2}\sigma_1^2w''(d_{1}+,1) +\mu_1w'(d_{1}+,1) -(\lambda_{1}+\rho)w(d_{1}+,1) +\lambda_{1}w(d_{1}+,2) =0.$ Therefore, $H(d_{1})=\mu_1-\mu_1w'(d_{1}+,1)-\frac{1}{2}\sigma_1^2w''(d_{1}+,1).$ Since $w'(d_1,1)=w'(d_{1}+,1)=1$, it follows that $H(d_{1})=-\frac{1}{2}\sigma_1^2w''(d_{1}+,1)\leq 0,$ where we have used the fact that $w''(d_{1}+,1)\geq 0$ as proven in Step 2-(d).

Hence, the fact that $H(d_1)\leq 0, H(\theta_{2})<0$, and $H$ is concave imply that $H(x) \leq 0, \forall x\in [\theta_{2},d_1]$ if and only if the following equivalent conditions are satisfied:
		\begin{align*}
		& H(x)\leq0, \ \forall x\in(\theta_{2},d_1] \Longleftrightarrow \max_{x\in (\theta_{2},d_1]}H(x)\leq0 \\
&\Longleftrightarrow \emph{either}\ H'(\theta_{2}+)\leq 0 \
	\emph{or} \  H'(\theta_{2}+)>0, \exists \  x_0 \in (\theta_2,d_1) \ \text{satisifies} \ H'(x_0)=0, H(x_0)\leq 0 \\
&\emph{or} \  H'(\theta_{2}+)>0 , H'(d_1)\geq 0.\\
		& \Longleftrightarrow \emph{either}\ w'(\theta_{2}+,2)\leq \frac{\lambda_{1}+\rho}{\lambda_{1}} \ \emph{or} \  w'(\theta_{2}+,2)> \frac{\lambda_{1}+\rho}{\lambda_{1}}, 
	\exists \  x_0 \in (\theta_2,d_1) \ \text{satisifies}\ w'(x_0,2) = \frac{\lambda_{1}+\rho}{\lambda_{1}},\\ &H(x_0)\leq 0
\ \emph{or} \ w'(\theta_{2}+,2)> \frac{\lambda_{1}+\rho}{\lambda_{1}} , H'(d_1)\geq 0.
		\end{align*}
Concluding, the results of Steps 2-(a)---2-(e) guarantee that $w(x,1)$ as in (\ref{3wx1}) satisfies the HJB equation under either of (i), (ii), (iii) of Theorem \ref{theorem4.3}.

\textbf{	\textsl{Step 3.}}  Now we show that the candidate value function $w(x,2)$ satisfies the HJB equation as well. Here no additional requirements are needed.
		
$\underline{\textsl{Step 3-(a).}}$	First we consider $x \in (\theta_2,b_2)$. Since	$	\frac{1}{2}\sigma_2^2w''(x,2) +\mu_2w'(x,2) -(\lambda_{2}+\rho)w(x,2) +\lambda_{2}w(x,1) =0$ by (\ref{4.7}) and (\ref{4.9}), we should prove that $1-w'(x,2)\leq 0$. Since $w''(x,2)\leq 0$ by Step 1, we have that $w'(x,2)$ is decreasing. Because $w'(b_{2}-,2)=w'(b_2,2)=1,$ it follows that $w'(x,2)\geq 1$ when $x \in (\theta_{2},b_2).$ 
		
$\underline{\textsl{Step 3-(b).}}$ Now we consider  $x\in [b_2,b_1)$. Since $ 1-w'(x,2) =0$,  we should prove that $	\frac{1}{2}\sigma_2^2w''(x,2) +\mu_2w'(x,2) -(\lambda_{2}+\rho)w(x,2) +\lambda_{2}w(x,1) \leq0.$ Define $F(x):= \frac{1}{2}\sigma_2^2w''(x,2) +\mu_2w'(x,2) -(\lambda_{2}+\rho)w(x,2) +\lambda_{2}w(x,1)=\mu_{2}-(\lambda_{2}+\rho)w(x,2) +\lambda_{2}w(x,1).$    Then we compute $F''(x)=\lambda_{2}w''(x,1)\leq0$, where we have used the fact that $w''(x,1)\leq0$ when $x\in[b_2,b_1)$ by Step 2-(b). We want to prove that $F(x)\leq0, x\in[b_2, b_1).$ Since $F(b_2)=F(b_{2}-)=0,$ it is enough to show that $F(x)$ is decreasing; i.e., $F'(x) \leq 0.$ Since $F''(x)\leq 0,$ we have  $F'(x)\leq F'(b_2), \forall x\in[b_2,b_1)$. Therefore, it is enough to prove that $	F'(b_2)= -(\lambda_{2}+\rho)+\lambda_{2}w'(b_2,1)        \leq0$.
	 
	In order to show the previous inequality, we next investigate $\lambda_{2}w'(b_2,1)$. When $x\in (d_1,b_2)$, from (\ref{4.9}) we have $	\frac{1}{2}\sigma_2^2w''(x,2) +\mu_2w'(x,2) -(\lambda_{2}+\rho)w(x,2) +\lambda_{2}w(x,1) =0.$ Differentiating the equation one time respect to $x$ and taking limits as $x \uparrow b_2$,  we obtain
\begin{align*}
\lambda_{2}w'(b_2,1) = \lambda_{2}w'(b_{2}-,1)= -\frac{1}{2}\sigma_2^2w'''(b_{2}-,2)-\mu_2w''(b_{2}-,2)+(\lambda_{2}+\rho)w'(b_{2}-,2) \leq \lambda_{2}+\rho,
\end{align*}
	where the last step follows from the fact that $w''(b_{2}-,2)=w''(b_2,2)=0, w'(b_{2}-,2)=w'(b_2,2)=1$ and $w'''(b_{2}-,2)\geq0.$ The fact that $w'''(b_2-,2)\geq 0$ can be indeed shown as follows. Assume, in the opposite, $w'''(b_{2}-,2)<0.$ Then since $w''(b_{2},2)=0,$  we have $w''(b_{2}-\epsilon,2)>0$ for $\epsilon$ small enough, which contradicts  $w''(x,2)\leq 0$ when $x\in [d_1,b_2)$ as proved in Step 1. Therefore, $F(x)\leq0$ when $x\in[b_2,b_1).$
	
$\underline{\textsl{Step 3-(c).}}$	   Finally, we consider $x\in [b_1,\infty)$. Similar to Step 3-(b), we should prove that $F(x) \leq 0$ for all $x\in[b_1, \infty).$ Since $w(x,1)=x+K_2,$ one has $F'(x)= -\rho<0$.  However $F(b_1)= F(b_{1}-)\leq 0,$ and therefore $F(x)\leq0 $ when $x\in [b_1,\infty).$

\textbf{	\textsl{Step 4.}} 
We notice that the liquidation-barrier dividend strategy $D^{d_1,b,w}$ associated with $w(x,i)$ in (\ref{3wx1}) and (\ref{3wx2}) fulfills the conditions of Definition \ref{dividend1}. Therefore, $w(x,i),i=1,2,$ in (\ref{3wx1}) and (\ref{3wx2}) is indeed the value function by Theorem \ref{verificationtheorem}.	
\end{proof}

\subsection{Proof of Theorem \ref{theorem4.5}}\label{ptheorem4.5}

 Throughout this proof we denote by $w^{(n)}$ the $n$-th derivative of the function $w, n\geq 4$, with respect to $x$. Before the proof, we need the following lemmata.

\begin{lemma}\label{regularity2}
	Set $I_i:=(\theta_i,\infty), i=1,2.$ Then the function $w(x,i)$ defined in (\ref{4wx1}) and (\ref{4wx2}) is such that $w(\cdot,1) \in C^1(I_1) \ \cap  \ C^2(I_1 \backslash \{d_1\})\ \cap\ C^{\infty}(I_1 \backslash \{d_1,\theta_{2},b_2,b_1\})$ and $w(\cdot,2) \in  C^2(I_2)  \cap C^4(I_2 \backslash \{b_2\}) \cap C^{\infty}(I_2 \backslash \{b_2\}).$ Moreover, for fixed $i \in \{1,2\}$ and $\theta_{2} <x <b_2$ , we have
	\begin{align*}
	\frac{1}{2}\sigma_i^2w^{(4)}(x,i) +\mu_iw'''(x,i) -(\lambda_{i}+\rho)w''(x,i) +\lambda_{i}w(x,3-i)=0.
	\end{align*}
\end{lemma}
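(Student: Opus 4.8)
The plan is to follow the template of Lemma \ref{regularity1}, re-deriving the regularity from the piecewise structure of (\ref{4wx1})--(\ref{4wx2}) together with the smooth-fit systems (\ref{system41})--(\ref{system42}). First I would observe that on each open subinterval entering the definitions, $w(\cdot,i)$ is a finite combination of exponentials and affine terms, hence real-analytic and in particular $C^\infty$ there. Consequently any loss of regularity is confined to the junction points: $\{d_1,\theta_2,b_2,b_1\}$ for $w(\cdot,1)$ and the single interior point $b_2$ for $w(\cdot,2)$ (note $I_2=(\theta_2,\infty)$ is open, so $\theta_2$ is not interior in regime two and need not be examined).

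For $w(\cdot,1)$, the smooth-fit system (\ref{system41}) already supplies $C^1$ across $d_1,\theta_2,b_2$ and $C^2$ across $b_1$. The key step is to upgrade $C^1$ to $C^2$ at the two soft junctions $\theta_2$ and $b_2$. On either side of each of these points $w(\cdot,1)$ solves a second-order ODE whose only inhomogeneity is the coupling term $\lambda_1 w(\cdot,2)$; namely (\ref{4.19}) on $(d_1,\theta_2]$ (where $w(\cdot,2)\equiv 0$), (\ref{4.20}) on $(\theta_2,b_2)$, and (\ref{4.23}) on $[b_2,b_1)$. Since $w(\cdot,2)$ is continuous at $\theta_2$ (with $w(\theta_2+,2)=0$) and at $b_2$ (by (\ref{system42})), the coupling is continuous there; evaluating the relevant ODE from the left and from the right and subtracting, while using the already-established matching of $w$ and $w'$, forces $w''(\theta_2-,1)=w''(\theta_2+,1)$ and $w''(b_2-,1)=w''(b_2+,1)$. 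Hence $w(\cdot,1)$ is $C^2$ across $\theta_2$ and $b_2$. At $d_1$, by contrast, the left piece $x-\theta_1$ gives $w''(d_1-,1)=0$, whereas (\ref{4.19}) evaluated at $d_1+$ together with $w(d_1+,1)=d_1-\theta_1$ and $w'(d_1+,1)=1$ yields $w''(d_1+,1)=\tfrac{2}{\sigma_1^2}\big[(\lambda_1+\rho)(d_1-\theta_1)-\mu_1\big]$, which is generically nonzero; thus $d_1$ is a genuine $C^2$-breakpoint. This establishes $w(\cdot,1)\in C^1(I_1)\cap C^2(I_1\backslash\{d_1\})$, and $C^\infty$ off $\{d_1,\theta_2,b_2,b_1\}$ follows from the first paragraph.

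For $w(\cdot,2)$, the only interior junction in $I_2$ is $b_2$, where (\ref{system42}) imposes matching of value, first and second derivatives; hence $w(\cdot,2)\in C^2(I_2)$, and since each defining piece is $C^\infty$ we obtain $w(\cdot,2)\in C^\infty(I_2\backslash\{b_2\})\subseteq C^4(I_2\backslash\{b_2\})$. For the final identity I would restrict to $(\theta_2,b_2)$, which contains no junction of either regime, so both $w(\cdot,1)$ and $w(\cdot,2)$ are $C^\infty$ there and jointly solve the coupled system (\ref{4.20})--(\ref{4.21}). Differentiating the $i$-th equation twice in $x$---legitimate by this smoothness---gives $\tfrac12\sigma_i^2 w^{(4)}(x,i)+\mu_i w'''(x,i)-(\lambda_i+\rho)w''(x,i)+\lambda_i w''(x,3-i)=0$, which is the asserted relation.

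The main obstacle is precisely the $C^2$-matching at $\theta_2$ and $b_2$: only a $C^1$ fit is imposed there a priori, so the agreement of the second derivatives must be extracted from the governing ODEs and the continuity of the coupling term, rather than read off directly from the smooth-fit conditions. Everything else is a matter of bookkeeping the piecewise smoothness and recording which of the conditions (\ref{system41})--(\ref{system42}) supply which order of contact.
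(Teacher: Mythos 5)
Your argument is correct and is essentially the paper's own proof: the paper disposes of this lemma by referring back to Lemma \ref{regularity1}, whose proof is exactly the ODE-bootstrapping you describe --- real-analyticity on each defining piece, followed by upgrading the imposed $C^1$ fit to $C^2$ at the soft junctions ($\theta_2$ and $b_2$ for $w(\cdot,1)$) by comparing the two one-sided limits of the governing second-order equation, whose coupling term is continuous there, and then reading off the fourth-order relation on $(\theta_2,b_2)$ by differentiating the coupled system twice. One small point worth flagging: your twice-differentiated equation carries the coupling term $\lambda_i w''(x,3-i)$, which is the intended reading of the displayed identity (its last term is evidently missing a double prime, as confirmed by how the identity is used in Step 2 of the proof of Theorem \ref{theorem4.5}), so your derivation proves the statement that is actually needed.
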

\begin{proof}
	The result can be shown by arguing as in Lemma \ref{regularity1}.
\end{proof}

\begin{lemma}\label{zero1}
	Suppose that $w''(\theta_2,1)>0, w''(b_2,1)<0$.  Define the equation $f(x):= w''(x,1) = C_3\alpha_3^2e^{\alpha_3x}+C_4\alpha_4^2e^{\alpha_4x}+C_5\alpha_5^2e^{\alpha_5x}+C_6\alpha_6^2e^{\alpha_6x}, x \in (\theta_2,b_2)$. Then $f$ admits a real zero and, in particular, at most 4 zeros when $x\in(\theta_2,b_2).$
\end{lemma}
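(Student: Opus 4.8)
The plan is to mirror the two-part structure of the proof of Lemma~\ref{zero2}, splitting the claim into existence of a zero and an upper bound on the number of zeros, while exploiting that here the sign information is posited directly on $w''(\cdot,1)$ at the two endpoints rather than inferred from the boundary values of $w'$.

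For existence, I would first record that by Lemma~\ref{regularity2} the map $x \mapsto w''(x,1)$ is continuous (indeed $C^\infty$) on the open interval $(\theta_2,b_2)$, since in Case (D) the only non-smoothness point relevant here, $d_1$, satisfies $d_1<\theta_2$. Consequently the one-sided limits $w''(\theta_2+,1)$ and $w''(b_2-,1)$ exist and, by hypothesis, satisfy $w''(\theta_2,1)>0$ and $w''(b_2,1)<0$. The intermediate value theorem then immediately delivers a point $x^*\in(\theta_2,b_2)$ with $f(x^*)=w''(x^*,1)=0$. This step is even more direct than the corresponding one in Lemma~\ref{zero2}, where the sign change had to be deduced indirectly from a monotonicity argument on $w'$.

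For the upper bound I would repeat the reduction used in Lemma~\ref{zero2}: setting $\beta_3:=C_3\alpha_3^2,\ \beta_2:=C_4\alpha_4^2,\ \beta_1:=C_5\alpha_5^2,\ \beta_0:=C_6\alpha_6^2$ and $f_3:=e^{\alpha_3},\ f_2:=e^{\alpha_4},\ f_1:=e^{\alpha_5},\ f_0:=e^{\alpha_6}$, one writes
\[
f(x)=\sum_{i=0}^{3}\beta_i f_i^{\,x},
\]
a sum of exponentials in four distinct positive bases ordered as $f_0>f_1>f_2>f_3>0$, which follows from $\alpha_3<\alpha_4<0<\alpha_5<\alpha_6$. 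Since $\beta_0=C_6\alpha_6^2\neq 0$ — guaranteed by the standing assumption $C_6\neq 0$ in Theorem~\ref{theorem4.5} — Theorem~1 in \cite{tossavainen2006zeros} applies and yields that $f$ has at most four real zeros on $(\theta_2,b_2)$.

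I do not expect any genuine obstacle inside the lemma itself: the existence part is a one-line application of the intermediate value theorem, and the counting part is a direct invocation of the exponential-sum zero bound. The only point requiring care is the bookkeeping — checking that the root ordering $\alpha_3<\alpha_4<0<\alpha_5<\alpha_6$ transfers to $f_0>f_1>f_2>f_3$ and that $\beta_0\neq0$ — but both are immediate from the construction in Case (D) and the hypotheses of Theorem~\ref{theorem4.5}.
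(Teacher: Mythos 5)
Your proposal is correct and follows essentially the same route as the paper: existence via continuity of $w''(\cdot,1)$ on $(\theta_2,b_2)$ together with the assumed sign change at the endpoints, and the bound of four zeros via the rewriting of $f$ as an exponential sum $\sum_{i=0}^{3}\beta_i f_i^{\,x}$ with ordered positive bases and $\beta_0=C_6\alpha_6^2\neq 0$, invoking Theorem~1 of \cite{tossavainen2006zeros} exactly as in Lemma~\ref{zero2}. The additional bookkeeping you flag (smoothness on the open interval because $d_1<\theta_2$, and the transfer of the root ordering to the bases) is accurate and matches what the paper leaves implicit.
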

\begin{proof} First, we show that there exists zeros. By assumption,  $w''(\theta_2,1)>0, w''(b_2,1)<0$. Since $w''(x,1)$ is continuous when $x \in (\theta_2,b_2),$ it thus follows that there exists zeros when $x\in(\theta_2,b_2).$ 
	
	 The rest of the proof now follows as in Lemma \ref{zero2}.
\end{proof}

\begin{proof}[\textbf{Proof of Theorem \ref{theorem4.5}}]
{By construction, the boundary conditions are satisfied.} The rest of the proof is organized in five steps.

\setlength{\parskip}{0.5em}
\textbf{	\textsl{Step 1.}}  We show that $w(x,1)$ is increasing when $x\in (\theta_{1},\theta_{2})$. By construction, we know that $w'(x,1)=1>0, x\in (\theta_{1},d_1].$  Therefore, we only need to prove that $w'(x,1)\geq 0$ for all $x\in (d_{1},\theta_{2}).$ 

Suppose by contradiction that there exists an interval $\mathcal{I}:=(h_2,h_1) \in (d_{1},\theta_{2})$ such that $w'(x,1)<0,x\in \mathcal{I},$ where $h_2:=\inf\{ x\in (d_1,\theta_2): w'(x,1)< 0 \}$. Now if $h_2=d_1,$ since $w'(d_1,1)=1$ and $w'(x,1)<0$ for $x\in (d_1,h_1)$, we would get a contraction with the fact that $w'(x,1)$ is continuous in $d_1$. If $h_2 >d_1,$ we can pick $x\in (d_1,h_2).$ Since $w(x,2)=0$ for $x\in (d_1,\theta_{2}],$ from (\ref{4.19}) we have
\begin{equation}\label{a16}
	\frac{1}{2}\sigma_1^2w''(x,1) +\mu_1w'(x,1) -(\lambda_{1}+\rho)w(x,1)  =0.
	\end{equation}
Due to $w'(x,1)\geq 0$ for $x\in (d_1,h_2)$ (by definition of $h_2$) and $ w(d_1,1)>0$ (because $w(\theta_{1},1)=0$ and $w'(x,1)=1$ for $x\in(\theta_{1},d_1]$), we have $ w(x,1)>0 $ on $(d_1,h_2)$. But now $\mu_{1}\leq 0$ together with (\ref{a16}) imply $w''(x,1)>0$ when $x \in (d_1,h_{2})$. Therefore, $w'(x,1)$ is strictly increasing when $x\in(d_1,h_{2}).$ Because $w'(d_1,1)=w'(d_{1}+,1)=1,$ we have $w'(x,1)\geq1$ when $x\in (d_1,h_{2}).$ On the other hand, $w'(x,1)\leq 0, x\in [h_2,h_1)$, thus leading to a contraction with $w'(x,1) $ being continuous in $h_2 \in (d_{1},\theta_{2})$. Therefore, there is no interval $\mathcal{I} \in (d_1,\theta_{2})$ such that $w'(x,1)< 0,$  and  $w(x,1)$ is increasing for all $x\in (\theta_{1},\theta_{2}).$

\textbf{	\textsl{Step 2.}}  Now we show that if $\mu_2\geq 0$ and $w'(\theta_{2}+,2)\geq 0$, then $w(x,2)$ is concave. In particular, $ w''(x,2)\leq 0$  when $x \in (\theta_2,b_2).$  

 By Lemma \ref{regularity2}, we have  $w(\cdot,2) \in C^4(I_2 \backslash \{ b_2\})$.  Applying It\^o's formula to $ (e^{-\rho t}w''(X^D_{t},2))_{t\in [0,\xi]}$, where $\xi:=\xi_1 \wedge \xi_2$ with $\xi_1:=\inf\{t\geq 0:X_t^D \geq b_{2}\} $ and $ \xi_2:= \inf \{t\geq 0:X^D_t\le \theta_2\}$, we have 
\begin{align*}
	\mathbb{E}\big[e^{-\rho \xi}w''(X_\xi^D,2)\big]&=w''(x,2)+\mathbb{E}\bigg[\int_{0}^{\xi}e^{-\rho t}(\frac{1}{2}\sigma_2^2w^{(4)}(X^D_t,2)+ \mu_{2}w'''(X^D_t,2)\\&-(\rho+\lambda_{2}) w''(X^D_t,2)+ \lambda_{2}w''(X^D_t,1))\bigg]\textrm{d}t.
	\end{align*}	
Since $(\mathcal{L}-\rho)w''(X^D_t,2)=0$ for all $t \leq \xi$, it follows that
	\begin{align}\label{a16-2}
	\mathbb{E}\big[e^{-\rho \xi_1}w''(X^D_{\xi_1},2)\mathbb{I}_{\{\xi_1<\xi_2\}}\big]+\mathbb{E}\big[e^{-\rho \xi_2}w''(X^D_{\xi_2},2)\mathbb{I}_{\{\xi_1 \geq \xi_2\}}\big]= w''(x,2).
	\end{align}
	Because $w''(x,2)=0$ for all $x \in [b_2, \infty)$, we have $\mathbb{E}\big[e^{-\rho \xi_1}w''(X^D_{\xi_1},2)\mathbb{I}_{\{\xi_1<\xi_2\}}\big]=0.$ Thus, we only need to compute $w''(\theta_{2}+,2).$
	When $x\in(\theta_2,b_2)$, from (\ref{4.21}) we have
\begin{equation}\label{3.5-1}
	\frac{1}{2}\sigma_2^2w''(x,2) +\mu_2w'(x,2) -(\lambda_{2}+\rho)w(x,2) +\lambda_{2}w(x,1) =0.
	\end{equation}
Therefore, taking limits, $\lambda_{2}w(\theta_{2}+,1)=-\frac{1}{2}\sigma_2^2w''(\theta_{2}+,2)-\mu_2w'(\theta_{2}+,2)+(\lambda_{2}+\rho)w(\theta_{2}+,2).$ Since $w(\theta_{1},1)=0$ and $w(x,1)$ is increasing by Step 1, we have $w(\theta_{2},1) = w(\theta_{2}+,1) > 0.$ Also, since $w(\theta_{2},2)=0,$ from (\ref{3.5-1}) we have $	-\frac{1}{2}\sigma_2^2w''(\theta_{2}+,2)>\mu_2w'(\theta_{2}+,2).$ Therefore, if $\mu_2 \geq 0$ and $w'(\theta_{2}+,2) \geq 0,$ then $w''(\theta_{2}+,2) \leq 0.$ As a consequence, from (\ref{a16-2}) $w''(x,2) \leq 0$ for $x\in (\theta_{2},b_2)$ and $w(x,2)$ is concave.

\textbf{	\textsl{Step 3.}}  We here show that the candidate value function $w(x,1)$ as in (\ref{4wx1}) satisfies the HJB equation. 

$\underline{\textsl{Step 3-(a).}}$	 Firstly, we consider $x \in (\theta_1,d_1].$ Since $ 1-w'(x,1) =0$, we should prove that $	\frac{1}{2}\sigma_1^2w''(x,1) +\mu_1w'(x,1) -(\lambda_{1}+\rho)w(x,1) +\lambda_{1}w(x,2) \leq0.$ Since $w(x,2)=0, x\in(\theta_1,d_1]$ and $\mu_1\leq 0$, we can obtain that $	\frac{1}{2}\sigma_1^2w''(x,1) +\mu_1w'(x,1) -(\lambda_{1}+\rho)w(x,1) +\lambda_{1}w(x,2) =\mu_1-(\lambda_{1}+\rho) (x-\theta_1) \leq0.$

$\underline{\textsl{Step 3-(b).}}$ Now we consider $x \in (d_1,\theta_2]$. Since $w(x,2)=0,  x\in (d_1,\theta_{2}],$ and because from (\ref{4.19}) we have
	\begin{equation}\label{3.5-2}
	\frac{1}{2}\sigma_1^2w''(x,1) +\mu_1w'(x,1) -(\lambda_{1}+\rho)w(x,1)  =0,
	\end{equation}
	we should prove that $1-w'(x,1)\leq 0$. From Step 1, we know that  $w(x,1)>0, w'(x,1) \geq 0 $ for all $x\in (d_1,\theta_2].$ Since $\mu_1\leq 0,$  from (\ref{3.5-2}) we have  $w''(x,1)>0$ when $x \in (d_1,\theta_{2}]$, thus $w'(x,1)$ is strictly increasing when $x\in(d_1,\theta_{2}].$  But it holds that $w'(d_1,1)=w'(d_{1}+,1)=1,$  which in turn yields that $w'(x,1)\geq1$ when $x\in (d_1,\theta_{2}].$

$\underline{\textsl{Step 3-(c).}}$ Next we consider $x \in [b_2,b_1).$ Since $	\frac{1}{2}\sigma_1^2w''(x,1) +\mu_1w'(x,1) -(\lambda_{1}+\rho)w(x,1) +\lambda_{1}w(x,2) =0$ by (\ref{4.23}), we should prove that $1-w'(x,1)\leq 0.$ From (\ref{4wx1}) we have $w(x,1)=C_{7}e^{\alpha_1x}+C_{8}e^{\alpha_2x}+\frac{\lambda_{1}\mu_1}{(\rho+\lambda_{1})^2}+\frac{\lambda_{1}(x+K_1)}{\rho+\lambda_{1}}$ and we claim that $C_7>0, C_8<0$. Indeed, from $w'(b_1,1)=w'(b_{1}-,1), w''(b_1,1)=w''(b_{1}-,1)$ in (\ref{system41}), we have $C_{7}\alpha_1e^{\alpha_1b_1}+C_{8}\alpha_2e^{\alpha_2b_1}+\frac{\lambda_{1}}{\rho+\lambda_{1}}=1$ and
$C_{7}\alpha_1^2e^{\alpha_1b_1}+C_{8}\alpha_2^2e^{\alpha_2b_1}=0$, from which we can obtain $C_{7}>0$ and $C_{8}<0$ by simple calculations using that $\alpha_1>0, \alpha_2<0$.

Because $w'''(x,1)=C_{7}\alpha_1^3e^{\alpha_1x}+C_{8}\alpha_2^3e^{\alpha_2x}>0$,  $w''(x,1)$ is strictly increasing. Since $w''(b_1,1)=0,$ we have $w''(b_2,1)<0$ and $w''(x,1)\leq 0$ when $x\in [b_2,b_1),$ and hence $w'(x,1)$ is decreasing for all $x\in [b_2,b_1) $. But $w'(b_1,1)=1$, and therefore $w'(x,1)\geq1$ when $x\in [b_2,b_1).$

$\underline{\textsl{Step 3-(d).}}$ Pick now $x \in [b_1, \infty).$ Since $w'(x,1)=1,$ we should prove that $	\frac{1}{2}\sigma_1^2w''(x,1) +\mu_1w'(x,1) -(\lambda_{1}+\rho)w(x,1) +\lambda_{1}w(x,2) \leq 0.$ Define $F(x):=\frac{1}{2}\sigma_1^2w''(x,1) +\mu_1w'(x,1) -(\lambda_{1}+\rho)w(x,1) +\lambda_{1}w(x,2) \leq 0$.  Because $F'(x)=-\rho<0$ and $F(b_1)=F(b_{1}-)=0,$ it follows that $F(x)\leq 0$ when $x\in [b_1,\infty).$

$\underline{\textsl{Step 3-(e).}}$ Finally, we consider $x \in (\theta_2,b_2)$. From (\ref{4.20}), we have $	\frac{1}{2}\sigma_1^2w''(x,1) +\mu_1w'(x,1) -(\lambda_{1}+\rho)w(x,1) +\lambda_{1}w(x,2) =0$, and we therefore should prove that $1-w'(x,1)\leq 0$.  Thanks to Lemma \ref{regularity2} we can differentiate (\ref{4.20}) two times respect to $x$ and obtain $\frac{1}{2}\sigma_1^2w^{(4)}(x,1) +\mu_1w'''(x,1) -(\lambda_{1}+\rho)w''(x,1)+\lambda_{1}w''(x,2)  =0$ for all $x \in (\theta_{2},b_2)$. Since $w''(x,2)\leq0$ when $x \in (\theta_{2},b_2)$ by Step 2, we find $\frac{1}{2}\sigma_1^2w^{(4)}(x,1) +\mu_1w'''(x,1) -(\lambda_{1}+\rho)w''(x,1) \geq0.$

From Step 3-(b), we have $w''(\theta_{2},1)>0$, and $w''(b_2,1)>0$ by Step 3-(c). Thus, by Lemma \ref{zero1}, there exists $N$ points $\theta_{2}<x_1<,...,<x_N<b_2$ such that $w''(x_i,1)=0, i \in \{1,...,N\}, N \leq 4$. Firstly, consider  $x \in [x_N,b_2).$  Since $w''(b_{2}-,1)=w''(b_2,1)<0$ by Step 3-(c), the weak maximum principle of Lemma \ref{weak} implies that 
	\begin{align*}
	w''(x,1) \leq \sup\{w''(x_N,1)^+,w''(b_2,1)^+\} =0,
	\end{align*}
	where $u(x)^+ := \max\{u(x),0\}.$ Next, iterating the same argument as above in the interval $[x_i,x_{i+1}), i=1,...,N-1,$ we obtain that $w''(x,1)\leq 0 $ when $x \in [x_1,b_2).$ Since $w'(b_{2}-,1)\geq1$ by Step 3-(c), we have $w'(x,1)\geq1$ when $x \in [x_1,b_2)$. Finally, we consider $x \in (\theta_{2},x_1).$  Since $w''(\theta_{2},1)=w''(\theta_{2}+,1)>0$ by Step 3-(b) and $ w''(x_1,1)=0$, we have $w''(x,1)>0$ when $x \in (\theta_{2},x_1).$ Because $w'(\theta_{2},1)=w'(\theta_{2}+,1)\geq 1$ by Step 3-(b), it follows that $w'(x,1)\geq1$ when $x \in (\theta_{2},x_1).$ Therefore, we have proved that $w'(x,1)\geq1$ when $x \in (\theta_{2},b_2).$

\textbf{	\textsl{Step 4.}}  In order to complete the proof it remains to check that $w(x,2)$ satisfies the HJB equation as well. This can be shown by arguing as in Step 3 of the proof of Theorem \ref{theorem4.3} and we therefore omit details.

\textbf{	\textsl{Step 5.}} 
We notice that the liquidation-barrier dividend strategy associated with $w(x,i)$ in (\ref{4wx1}) and (\ref{4wx2}) fulfills the conditions of $D^{d_1,b,w}$ in Definition \ref{dividend1}. Therefore, $w(x,i),i=1,2,$ in (\ref{4wx1}) and (\ref{4wx2}) is indeed the value function by Theorem \ref{verificationtheorem}.

\end{proof}

\section{Some auxiliary results}
\renewcommand\theequation{B.\arabic{equation}}

\begin{lemma}\label{ODE}
\begin{enumerate}[(i)]
	\item The general solution to the equation   \begin{equation*}
	 \frac{1}{2}\sigma^2g''(x) +\mu g'(x) -(\lambda+\rho)g(x)=0
	\end{equation*}
	 is given by $g(x) = C_1 e^{\alpha_1x}+ C_2 e^{\alpha_2x},$ where $C_1,C_2 \in \mathbb{R}$ are constants and the real numbers $\alpha_1,\alpha_2$ are given by
	\begin{equation}\label{alpha1}
	\alpha_1 = \frac{1}{\sigma^2}(-\mu +\sqrt{\mu^2+2\sigma^2(\lambda+\rho)})>0,
	\end{equation}
	and
	\begin{equation}\label{alpha2}
	\alpha_2 = \frac{1}{\sigma^2}(-\mu -\sqrt{\mu^2+2\sigma^2(\lambda+\rho)})<0.
	\end{equation}
	
	 \item Given a constant $K$. The general solution to the equation \begin{equation*}
	\frac{1}{2}\sigma^2h''(x) +\mu h'(x) -(\lambda+\rho)h(x)+\lambda(x+K)=0
	\end{equation*}
	 is given by $h(x) = C_3 e^{\alpha_1x}+ C_4 e^{\alpha_2x}+ \frac{\mu \lambda}{(\lambda+\rho)^2}+\frac{\lambda(x+K)}{\lambda+\rho}$, where $C_3,C_4 \in \mathbb{R}$ are constants and the real numbers $\alpha_1,\alpha_2$ are given in (\ref{alpha1}) and (\ref{alpha2}).

	\item The general solutions to the system of differential equations
\begin{equation*}
 \left\{
\begin{aligned}
\frac{1}{2}\sigma_1^2h''(x) +\mu_1 h'(x) -(\lambda_1+\rho)h(x)+\lambda_1 g(x)=0 \\
\frac{1}{2}\sigma_2^2g''(x) +\mu_2 g'(x) -(\lambda_2+\rho)g(x)+\lambda_2 h(x)=0
\end{aligned}
\right.
\end{equation*}
 are given by 
	\begin{equation*}
	h(x) = A_1 e^{\alpha_1x}+ A_2 e^{\alpha_2x}+ A_3 e^{\alpha_3x}+ A_4 e^{\alpha_4x}
	\end{equation*}
	\begin{equation*}
	g(x) = B_1 e^{\alpha_1x}+ B_2 e^{\alpha_2x}+ B_{3} e^{\alpha_3x}+ B_{4} e^{\alpha_4x}
	\end{equation*}
	where, for each $j =1,2,3,4,$ $B_j = \frac{\phi_1(\alpha_j)}{\lambda_{1}}A_j = \frac{\lambda_{2}}{\phi_2(a_j)}A_j.$ The real numbers $\alpha_1< \alpha_2<0<\alpha_3<\alpha_4$ are the real roots of the equation $\phi_1(\alpha)\phi_2(\alpha) = \lambda_{1}\lambda_{2}$, where $\phi_i = -\frac{1}{2}\sigma_i^2\alpha^2 -\mu_i \alpha +(\lambda_i+\rho), i =1,2.$ 
	\end{enumerate}
\end{lemma}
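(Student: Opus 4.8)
The plan is to handle the three parts in order of increasing difficulty, parts (i) and (ii) being standard constant-coefficient ODE theory and part (iii) requiring a genuine root-counting argument. For part (i) I would insert the ansatz $g(x)=e^{\alpha x}$ to obtain the characteristic equation $\tfrac12\sigma^2\alpha^2+\mu\alpha-(\lambda+\rho)=0$, whose roots are exactly $\alpha_1,\alpha_2$ of \eqref{alpha1}--\eqref{alpha2}. Since $\sigma>0$ and $\lambda+\rho>0$, the discriminant $\mu^2+2\sigma^2(\lambda+\rho)$ is strictly positive and the product of the roots equals $-2(\lambda+\rho)/\sigma^2<0$, so the two roots are real, distinct and of opposite sign, giving $\alpha_1>0>\alpha_2$; the two-dimensionality of the solution space of a second-order linear ODE then yields the stated general solution. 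For part (ii) the homogeneous part is as in (i), and I would seek a particular solution of affine form $h_p(x)=ax+b$; matching the coefficient of $x$ and the constant term forces $a=\lambda/(\lambda+\rho)$ and $b=\mu\lambda/(\lambda+\rho)^2+\lambda K/(\lambda+\rho)$, which is precisely the non-exponential part of the claimed formula.

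The crux is part (iii). Rewriting the system as a first-order linear system in $(h,h',g,g')$ shows its solution space is exactly four-dimensional; equivalently, solving the first equation for $g=-\tfrac1{\lambda_1}\bigl(\tfrac12\sigma_1^2h''+\mu_1h'-(\lambda_1+\rho)h\bigr)$ and substituting into the second produces a single fourth-order constant-coefficient equation for $h$ whose characteristic polynomial is exactly $P(\alpha):=\phi_1(\alpha)\phi_2(\alpha)-\lambda_1\lambda_2$. Inserting $h=Ae^{\alpha x}$, $g=Be^{\alpha x}$ into the system gives $-\phi_1(\alpha)A+\lambda_1B=0$ and $\lambda_2A-\phi_2(\alpha)B=0$, a $2\times2$ homogeneous system that admits a nontrivial solution if and only if its determinant $\phi_1(\alpha)\phi_2(\alpha)-\lambda_1\lambda_2=P(\alpha)$ vanishes; at any such root the two equations are proportional and yield $B=\tfrac{\phi_1(\alpha)}{\lambda_1}A=\tfrac{\lambda_2}{\phi_2(\alpha)}A$, the two expressions coinciding precisely because $P(\alpha)=0$ (note that $\phi_1(\alpha)\phi_2(\alpha)=\lambda_1\lambda_2\neq0$ ensures neither factor vanishes, so the relation is well posed).

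The main obstacle is to show that $P$ has four distinct real roots ordered as $\alpha_1<\alpha_2<0<\alpha_3<\alpha_4$. I would study the sign of $\Phi(\alpha):=\phi_1(\alpha)\phi_2(\alpha)$ against the level $\lambda_1\lambda_2>0$. Each $\phi_i$ is a downward-opening parabola with $\phi_i(0)=\lambda_i+\rho>0$ and, since the product of its roots is negative, exactly one negative root $r_i^-$ and one positive root $r_i^+$. One computes $P(0)=(\lambda_1+\rho)(\lambda_2+\rho)-\lambda_1\lambda_2=\rho(\lambda_1+\lambda_2+\rho)>0$, i.e. $\Phi(0)>\lambda_1\lambda_2$, while $\Phi$ vanishes at each $r_i^\pm$ and, having positive leading coefficient $\tfrac14\sigma_1^2\sigma_2^2$, tends to $+\infty$ as $\alpha\to\pm\infty$. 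Comparing $\Phi$ to the level $\lambda_1\lambda_2$ on the four disjoint intervals $(-\infty,\min_i r_i^-)$, $(\max_i r_i^-,0)$, $(0,\min_i r_i^+)$ and $(\max_i r_i^+,+\infty)$ produces a crossing in each, hence at least two negative and two positive roots; since $\deg P=4$ there are exactly four, all distinct and with the asserted sign pattern. Finally, distinct exponents make $e^{\alpha_1 x},\dots,e^{\alpha_4 x}$ linearly independent, so they span the four-dimensional solution space, and pairing each $A_j$ with $B_j=\tfrac{\phi_1(\alpha_j)}{\lambda_1}A_j$ gives the claimed general solution, completing the proof.
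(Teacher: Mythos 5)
Your proof is correct. For the record, the paper itself does not prove this lemma: it dismisses (i) and (ii) as classical and refers the reader to Lemma A.1 of \cite{ferrari2018optimal} for (iii), so there is no in-paper argument to compare against line by line. Your writeup is the standard argument and, in particular, supplies a self-contained proof of the only nontrivial point, namely that $\phi_1(\alpha)\phi_2(\alpha)=\lambda_1\lambda_2$ has exactly four distinct real roots with the sign pattern $\alpha_1<\alpha_2<0<\alpha_3<\alpha_4$. The root localization is sound: each $\phi_i$ is a downward parabola with $\phi_i(0)=\lambda_i+\rho>0$ and roots $r_i^-<0<r_i^+$, the product $\Phi=\phi_1\phi_2$ tends to $+\infty$ at $\pm\infty$, vanishes at the $r_i^{\pm}$, and satisfies $\Phi(0)-\lambda_1\lambda_2=\rho(\lambda_1+\lambda_2+\rho)>0$, so the intermediate value theorem yields one crossing of the level $\lambda_1\lambda_2$ in each of the four pairwise disjoint open intervals $(-\infty,\min_i r_i^-)$, $(\max_i r_i^-,0)$, $(0,\min_i r_i^+)$, $(\max_i r_i^+,+\infty)$; degree four then forces exactly these four roots, all simple and distinct. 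The observation that $\phi_1(\alpha_j)\phi_2(\alpha_j)=\lambda_1\lambda_2\neq 0$ guarantees $\phi_2(\alpha_j)\neq 0$, which is needed for the second expression $B_j=\frac{\lambda_2}{\phi_2(\alpha_j)}A_j$ to make sense, is a detail worth keeping. The only cosmetic remark is that the paper's statement labels the roots $\alpha_3<\alpha_4<0<\alpha_5<\alpha_6$ in the body (Sections 4.3--4.4) but $\alpha_1<\alpha_2<0<\alpha_3<\alpha_4$ in the lemma; your argument is indifferent to the naming.
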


\begin{proof}
We skip the classical proofs of (i) and (ii). For the proof of (iii) we refer the reader to Lemma A.1 in \cite{ferrari2018optimal}. 
\end{proof}

\begin{lemma}(Weak maximum principle for $c\leq 0$)\label{weak}
	Let $\Omega$ be an open connected set in $\mathbb{R}^n$ with boundary $\partial \Omega = \bar{\Omega} \cap (\mathbb{R}^n \setminus \Omega) $. Let $L$ be the second order differential operator
	\begin{equation*}
	L = \sum_{i,j=1}^{n}a_{ij}(x)D_{ij}+\sum_{i=1}^{n}b_i(x)D_i+c(x)
	\end{equation*}
	with $a_{ij}\in L^\infty_{\text{loc}}(\Omega)$ and $b_i, c \in L^{\infty}(\Omega).$
	
	Assume $u \in C^2(U)\cap C(\bar{U})$ and $c \leq 0 $ in $U.$
	If $Lu \geq 0$ in $U$, then $\sup_{\bar{U}} u \leq \sup_{\partial\Omega}u^+.$ If $Lu \leq 0$ in $U$, then $\inf_{\bar{U}} u \geq
	\inf_{\partial\Omega}u^-.$
\end{lemma}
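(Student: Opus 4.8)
The plan is to follow the classical route for the weak maximum principle of uniformly elliptic operators (as in, e.g., Gilbarg--Trudinger, Theorem~3.1), under the standing assumptions---implicit in the statement and satisfied in our application where the principal coefficient is $\tfrac12\sigma_i^2>0$---that $L$ is elliptic, i.e.\ the symmetric matrix $(a_{ij}(x))$ is positive definite with $a_{11}(x)\ge\lambda>0$ for a.e.\ $x$, and that $U=\Omega$ is bounded, so that $\overline{U}$ is compact and the continuous function $u$ attains its extrema. I would prove only the first assertion, $Lu\ge 0 \Rightarrow \sup_{\overline{U}}u\le\sup_{\partial\Omega}u^+$, and then deduce the second by applying it to $-u$: since $L(-u)=-Lu\ge 0$, $\sup_{\overline{U}}(-u)=-\inf_{\overline{U}}u$ and $(-u)^+=-u^-$ with the convention $u^-:=\min\{u,0\}$, this gives $\inf_{\overline{U}}u\ge\inf_{\partial\Omega}u^-$.

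First I would treat the reduced operator $L_0:=L-c=\sum a_{ij}D_{ij}+\sum b_i D_i$ (no zeroth-order term) in the \emph{strict} subsolution case $L_0u>0$, and show $u$ has no interior maximum. Indeed, if $u$ attained a maximum at $x_0\in U$, then $Du(x_0)=0$ and the Hessian $D^2u(x_0)$ would be negative semidefinite; since $(a_{ij}(x_0))$ is positive semidefinite, the elementary fact that $\mathrm{tr}(AB)\le 0$ whenever $A\succeq 0$ and $B\preceq 0$ yields $\sum a_{ij}D_{ij}u(x_0)\le 0$, whence $L_0u(x_0)=\sum a_{ij}D_{ij}u(x_0)\le 0$, contradicting $L_0u>0$. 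Thus $\sup_{\overline{U}}u=\sup_{\partial\Omega}u$ for strict $L_0$-subsolutions.

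Next I would pass from the strict to the weak inequality via the standard perturbation $u_\varepsilon:=u+\varepsilon\,e^{\gamma x_1}$. A direct computation gives $L_0u_\varepsilon=L_0u+\varepsilon(a_{11}\gamma^2+b_1\gamma)e^{\gamma x_1}$, and choosing $\gamma>\|b_1\|_\infty/\lambda$ forces $a_{11}\gamma^2+b_1\gamma\ge\gamma(\lambda\gamma-\|b_1\|_\infty)>0$, hence $L_0u_\varepsilon>0$; the previous step and letting $\varepsilon\downarrow 0$ give $\sup_{\overline{U}}u\le\sup_{\partial\Omega}u\le\sup_{\partial\Omega}u^+$ for every $L_0$-subsolution. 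Finally, to incorporate $c\le 0$ I would restrict to the open set $U^+:=\{x\in U:u(x)>0\}$ (if $U^+=\emptyset$ then $u\le 0$ and the bound is trivial). On $U^+$ one has $L_0u=Lu-cu\ge -cu\ge 0$, since $Lu\ge 0$, $c\le 0$ and $u>0$; applying the $L_0$-result on $U^+$ yields $\sup_{\overline{U^{+}}}u\le\sup_{\partial U^+}u^+$. Because $\partial U^+\cap U\subset\{u=0\}$ by continuity and $\partial U^+\cap\partial\Omega\subset\partial\Omega$, the boundary supremum is bounded by $\sup_{\partial\Omega}u^+$, and combining with $u\le 0$ off $U^+$ completes the first assertion.

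The individual steps are short, so the main obstacle is organizational and assumption-level: one must make the ellipticity of $(a_{ij})$ explicit, since it is exactly what makes the trace inequality bite in the interior-maximum step and the exponential perturbation $e^{\gamma x_1}$ effective, and one must assume $U$ bounded so the suprema are attained and comparisons with boundary values are meaningful. The only genuinely delicate point is the treatment of $\partial U^+$ in the last reduction, where one has to argue carefully that the portion of this boundary interior to $U$ lies in the zero level set of $u$, so that $u^+$ contributes nothing there beyond $\sup_{\partial\Omega}u^+$.
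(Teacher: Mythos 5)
Your proof is correct and follows exactly the classical Gilbarg--Trudinger argument (their Theorem 3.1: interior-maximum contradiction for strict subsolutions, the $e^{\gamma x_1}$ perturbation, reduction to $U^+=\{u>0\}$ for the $c\le 0$ case), which is precisely what the paper relies on --- its ``proof'' is a one-line citation to Chapter 3 of \cite{gilbarg2001elliptic}. You were also right to make explicit the hypotheses the paper's statement omits, namely ellipticity of $(a_{ij})$ and boundedness of the domain; both are needed for the argument and both hold in the paper's application, where the principal coefficient is the constant $\tfrac12\sigma_i^2>0$ and the relevant sets are bounded intervals.
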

\begin{proof}
We refer the reader to Chapter 3 in \cite{gilbarg2001elliptic}.
\end{proof}

\section*{Acknowledgments}
The authors gratefully acknowledge funding by the Deutsche Forschungsgemeinschaft (DFG, German Research Foundation) – SFB 1283/2 2021 – 317210226. The work of Shihao Zhu was also supported by the program of China Scholarships Council.

\bibliographystyle{siam}

\bibliography{regime}

\end{document}